\newcommand{\Sp}{\mathbb{S}}
\newcommand{\To}{\mathbb{T}}
\newtheorem{theo}{Theorem}[section]
\newtheorem{defi}[theo]{Definition}
\newtheorem{lemm}[theo]{Lemma}
\newtheorem{pro}[theo]{Proposition}
\newtheorem{exa}[theo]{Example}
\newtheorem{rem}[theo]{Remark}
\newtheorem{col}[theo]{Corollary}
\numberwithin{equation}{section}
\title{Symplectic embeddings of toric domains with boundary a lens space}
\author{Jonathan Trejos}
\date{}
\begin{document}

\maketitle
\begin{abstract}
 We give a combinatorial description of the embedded contact complex (ECC) of a certain family of contact toric lens spaces that we call concave lens spaces. We also define a notion of a concave toric domain that generalizes the usual concave toric domain in a way that possesses a singularity point and has a boundary a lens space. After desingularization these toric domains include the unitary cotangent bundle of $\mathbb{S}^2$ and the unitary cotangent bundle of $\mathbb{R}P^2$. We use the combinatorial expression of the ECC to compute the ECH capacities of these toric domains. Furthermore, for certain concave toric domains we describe a packing of symplectic manifolds that recovers their ECH capacities.
\end{abstract}

\setcounter{tocdepth}{3}
\tableofcontents

\section{Introduction}
Let $(X,\omega)$ be a symplectic four-manifold, possibly with boundary or corners, non-compact, maybe disconnected. Its $\text{ECH}$ capacities are a sequence of real numbers 
\begin{equation}
    0=c_0(X,\omega)\leq c_1(X,\omega)\leq c_2(X,\omega)\leq \cdots\leq \infty
\end{equation}
The $\text{ECH}$ capacities were introduced in \cite{hutchings2011quantitative}, see also \cite{hutchings2014lecture}. We give more detail about these definitions in Section \ref{ECHfoundations}.

The following are elementary properties of the ECH capacities:
\begin{enumerate}
    \item \textit{(Monotonicity)}  If there exists a symplectic embedding $(X,\omega)\xhookrightarrow{s} (X',\omega')$, then $c_k(X,\omega)\leq c_k(X',\omega')$ for all $k$.
    \item \textit{(Conformality)} If $r>0$ then
    $$c_k(X,r\omega)=r c_k(X,\omega)$$
    \item \textit{(Disjoint union)}
    $$c_k\left(\coprod_{i=i}^n (X_i,\omega_i)\right)=\max_{k_1+\cdots+k_n=k}\displaystyle\sum_{i=1}^n c_{k_i}(X_i,\omega_i)$$
    \item \textit{(Ellipsoid)} If $a,b>0$, define the ellipsoid
    $$E(a,b)=\left\{(z_1,z_2)\in \mathbb{C}^2:\displaystyle\frac{\pi |z_1|^2}{a}+\displaystyle\frac{\pi |z_2|^2}{b}\leq 1\right\}$$
    Then $c_k(E(a,b))=N(a,b)_k$, where $N(a,b)$ denotes the sequence of all nonnegative integer linear combinations of $a$ and $b$, arranged in nondecreasing order, indexed starting at $k=0$.
\end{enumerate}

A proof of these properties can be found in \cite{hutchings2014lecture}.

The computation of the $\text{ECH}$ capacities is not an easy task but several improvements have been done. An interesting family of symplectic four-manifolds for which valuable results were obtained is described as follows. If $\Omega$ is a domain in the first quadrant of the plane, define the toric domain 
$$X_\Omega=\{z\in \mathbb{C}^2:\pi (|z_1|^2,|z_2|^2)\in \Omega\}$$
For example, if $\Omega$ the triangle with vertices $(0,0), (a,0)$ and $(0,b)$, then $X_\Omega$ is the ellipsoid $E(a,b)$. 

The $\text{ECH}$ capacities of toric domains $X_\Omega$, when $\Omega$ is convex and does not touch the axes were computed in \cite{hutchings2011quantitative} theorem 1.11. The cases in which the region $\Omega$ touch the axis have received considerably more attention and they have special names:

\begin{defi}
    A \textit{convex toric domain} is a toric domain $X_\Omega$, where $\Omega$ is a closed region in the first quadrant bounded by the axes and a convex curve from $(a,0)$ to $(0,b)$, for $a$ and $b$ positive numbers. Similarly a concave domain $X_\Omega$ is a toric domain where $\Omega$ is a closed region in the first quadrant bounded by the axes and a concave curve from $(a,0)$ to $(0,b)$, for $a$ and $b$ positive number. 
\end{defi}

The $\text{ECH}$ capacties of concave domains were calculated \cite{Choi_2014}, as well as the capacities of the convex domains \cite{cristofaro2019symplectic}. In the present work we aim to generalize the notion of concave toric domains to consider symplectic manifolds with boundary a contact manifold diffeomorphic to a lens space. For these new concave domains, we compute combinatorial expressions to their $\text{ECH}$ capacities (see Theorem \ref{concavecapacities}). To properly define this generalization of concave toric domains we define (Section \ref{symplectictoricorb}) an orbifold with one singularity point that plays the role of $\mathbb{C}^2$ in the definition of concave domains. This singularity point can be removed in several ways. One interesting way to remove the singularity point is using the techniques for almost toric fibration introduced by Symington \cite{symington2002dimensions}. With the use of these techniques it is possible to recover as concave domains well known spaces as the unit cotangent bundle of $\Sp^2$ as well as the unit cotangent bundle of $\mathbb{R}P^2$ (see Examples \ref{S2} and \ref{RP2}) which recently some interesting properties where found by Ferreira and Ramos \cite{Ferreira_2022}.

We also want this present work to ground the basis to generalize some of the results obtained for the classic concave toric domains. We beging this project by generalizing the ball packing result from \cite{Choi_2014} in Section \ref{ballpacking}.

\subsection{Symplectic Toric Orbifolds}
\label{symplectictoricorb}

Given a pair of relatively primes positive integers $(n,m)$, our intention in this section is to define a symplectic orbifold $M(n,m)$ such that we can define an analogous to the toric domains introduced by Hutchings but with a boundary a lens space. The definition we use of lens space is as follows

\begin{defi}
\label{lensspace}
    Let $(t_1,t_2)$ be the coordinates on $\To^2=\Sp^1\times\Sp^1$, $x$ be the coordinate on $I=[0,1]$ and orient $I\times\To^2$ by the frame $\{\partial_{x},\partial_{t_1},\partial_{t_2}\}$. The \textit{lens space} $L(n,m)$ is the quotient of $I\times \To^2/\sim$ where $\sim$ collapses the integral curves of $\partial t_1$ on $\To^2\times \{0\}$ and collpases the integral curves of $n\partial_{t_1}-m\partial_{t_2}$.
\end{defi}

As we describe below this definition of a lens space is very useful to describde certain contact strutures over them. This definition can be found in \cite{lerman2000contact} and \cite{symington2002dimensions}.

    \subsubsection{Construction of the toric orbifold $M(n,m)$.}

    Fix a par of relative prime positive integer $(n,m)$ and consider the cone
    $$V_{n,m}=\{t_1(n,m)+t_2(0,1):t_1,t_2>0\}$$

    we call $\{t(0,1):t\geq 0\}$ and $\{t(n,m):t\geq 0\}$ the \textit{axis} of $V_{n,m}$

    Notice that $V_{n,m}\times \To^2$ can be regarded as a symplectic manifold with the $2$-form
    \begin{equation}
    \label{symplecticform}
    \omega=\displaystyle\frac{1}{2\pi }(\dd t_1\wedge \dd \theta_1+\dd t_2 \wedge \dd \theta_2) 
    \end{equation}
    where $(t_1,t_2)$ are the variables of $V_{n,m}$ and $(\theta_1,\theta_2)$ are the variables of $\To^2$.

    We are going to construct the manifold $M(n,m)$ as a quotient of $\overline{V}_{n,m}\times \To^2$. We say that $(t_1,t_2,\theta_1,\theta_2)\sim (t'_1,t'_2,\theta'_1,\theta'_2)$ as follows
    \begin{enumerate}[i]
    \item $(t_1,t_2,\theta_1,\theta_2)=(t'_1,t'_2,\theta'_1,\theta'_2)$.
    \item $t_1=t'_1=0$ and $(\theta_1,\theta_2+\theta)=(\theta'_1,\theta'_2)$ for some real $\theta$.
    \item $t_2=t'_2=0$ and $(\theta_1+m\theta,\theta_2-n\theta)=(\theta'_1,\theta'_2)$ for some real $\theta$.
    \end{enumerate}
    Then we define $M(n,m)=(\overline{V}_{n,m}\times \To^2)/\sim$ $M(n,m)$ is a symplectic toric orbifold with a canonical moment map $\pi:M(n,m)\rightarrow \bar{V}_{(p,q)}$.

    \begin{exa}
        Take $m=0$ and $n=1$ in this case it is easy to see that $M(1,0)$ is symplectomorphic to $\mathbb{C}^2$.
    \end{exa}

\subsubsection{Visible submanifolds of $M(n,m)$.}
\label{NotableSubs}
We begin this section by describing a family of lens spaces contained in $M(n,m)$.

\begin{lemm}
\label{exilens}
Let $a:[0,1]\rightarrow \bar{V}_{n,m}$ be a smooth curve  such that $a(0)$ lies in the ray $\{t(n,m):t>0\}$ and $a(1)$ lies in the ray $\{t(0,1):t>0\}$ then
\begin{enumerate}[i]
    \item $Y_a\vcentcolon=\pi^{-1}(a([0,1]))$ is diffeomorphic to the lens space $L(n,m)$.
    \item Suppose that $a\times a'>0$ then $Y_a$ is a contact manifold with contact form
    \begin{equation}
    \label{contacsstructure}
    \lambda_a=a_1 \dd t_1+a_2 \dd t_2     
    \end{equation}
    where $a=(a_1,a_2)$.
\end{enumerate}
\end{lemm}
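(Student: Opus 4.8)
The plan is to analyze the topology and contact geometry of $Y_a = \pi^{-1}(a([0,1]))$ directly from the quotient description of $M(n,m)$ given above, by pulling everything back to the explicit model $\bar V_{n,m}\times\To^2$ before taking the quotient $\sim$.

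First I would prove part (i). Over the interior of the curve — i.e.\ over $a((0,1))$, which lies in the open cone $V_{n,m}$ — the map $\pi$ is an honest $\To^2$-bundle, trivial since the base is an interval, so $\pi^{-1}(a((0,1)))\cong (0,1)\times\To^2$ with coordinates $(x,\theta_1,\theta_2)$, where $x$ parametrizes the curve. The two endpoints are where the quotient $\sim$ identifies things: at $a(0)$ on the ray $\{t(n,m):t>0\}$ (so $t_2=0$) relation (iii) collapses the circle generated by the vector field $m\partial_{\theta_1}-n\partial_{\theta_2}$, while at $a(1)$ on the ray $\{t(0,1):t>0\}$ (so $t_1=0$) relation (ii) collapses the circle generated by $\partial_{\theta_2}$. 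Comparing with Definition \ref{lensspace}: there $I\times\To^2$ with coordinates $(t_1,t_2)$ on $\To^2$ is quotiented by collapsing the $\partial_{t_1}$-orbits over $\To^2\times\{0\}$ and the $(n\partial_{t_1}-m\partial_{t_2})$-orbits over the other end. So I would exhibit an explicit diffeomorphism $\To^2\to\To^2$ (an element of $\mathrm{SL}_2(\Z)$, using that $\gcd(n,m)=1$ to complete $(m,-n)$ and $(0,1)$ appropriately — note $\det\begin{psmallmatrix}0&1\\ m&-n\end{psmallmatrix}=-m$, so one must be slightly careful and reorder/reorient the ends of $I$) carrying the pair of collapsed directions $\{\partial_{t_1},\,n\partial_{t_1}-m\partial_{t_2}\}$ to the pair $\{\partial_{\theta_2},\,m\partial_{\theta_1}-n\partial_{\theta_2}\}$, and check it intertwines the two collapsing equivalence relations and hence descends to a diffeomorphism $L(n,m)\xrightarrow{\sim}Y_a$. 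The smoothness of $Y_a$ as a manifold (no orbifold points) is automatic since the curve avoids the cone point $0$.

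Next, part (ii). The contact condition. Pull $\lambda_a = a_1\,dt_1 + a_2\,dt_2$ back to $(0,1)\times\To^2$ by restricting to the curve: writing $a(x)=(a_1(x),a_2(x))$, the restriction is $\lambda = a_1(x)\,d t_1 + a_2(x)\,d t_2$ evaluated along the curve. But $dt_i$ restricted to $Y_a$ must be re-expressed in the intrinsic coordinates; it is cleaner to observe that the Liouville-type primitive of $\omega$ in \eqref{symplecticform} is $\mu = \frac{1}{2\pi}(t_1\,d\theta_1 + t_2\,d\theta_2)$ up to an exact form, and that the standard recipe for the contact form induced on a hypersurface transverse to the Liouville vector field gives exactly $\lambda_a$; alternatively just compute $\lambda_a\wedge d\lambda_a$ directly. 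One finds $d\lambda_a = a_1'\,dx\wedge dt_1 + a_2'\,dx\wedge dt_2$ (with $dt_i$ meaning $d\theta_i$ in the fiber coordinates), so
\[
\lambda_a\wedge d\lambda_a = (a_1 a_2' - a_2 a_1')\, dt_1\wedge dx\wedge dt_2,
\]
which is nonvanishing precisely when $a_1 a_2' - a_2 a_1' = a\times a' \neq 0$; the hypothesis $a\times a'>0$ pins the coorientation. I would then check that this form extends smoothly across the two collapsed ends: near $a(0)$, in coordinates adapted to the collapsed circle direction $m\partial_{\theta_1}-n\partial_{\theta_2}$, the form $\lambda_a$ must vanish on exactly that direction and behave like the standard contact form on the disk bundle end, which follows because $a(0)$ lies on the ray $\{t(n,m)\}$, i.e.\ $a(0)$ is proportional to $(n,m)$, so $\lambda_a|_{a(0)}$ annihilates the vector $m\partial_{\theta_1}-n\partial_{\theta_2}$ (pairing $a(0)\propto(n,m)$ with $(m,-n)$ gives $nm-mn=0$) — exactly the circle being collapsed; symmetrically at $a(1)$, where $a(1)\propto(0,1)$ and the collapsed direction is $\partial_{\theta_1}$... wait, it is $\partial_{\theta_2}$ that survives and $\partial_{\theta_1}$ — here I must be careful: relation (ii) collapses $\partial_{\theta_2}$ when $t_1=0$, and indeed $a(1)\propto(0,1)$ pairs to $0$ with... $(0,1)\cdot(0,1)=1\neq 0$, so it is $dt_1$ that should be the collapsed one; this bookkeeping (which coordinate pairs with which ray) is exactly the place one has to be meticulous, and matching it correctly with Definition \ref{lensspace} is the crux.

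The main obstacle I anticipate is precisely this last bookkeeping: getting the $\mathrm{SL}_2(\Z)$ change of fiber coordinates exactly right so that (a) the two collapsed circle directions match those in Definition \ref{lensspace} in the correct order, with the correct orientation of $I$, and (b) simultaneously the contact form $\lambda_a$ extends smoothly and with the claimed coorientation across both singular ends. Everything else — the bundle triviality over the interior, the wedge-product computation for the contact condition — is routine. I would organize the write-up so that the diffeomorphism of (i) is constructed once and then reused to transport $\lambda_a$, rather than checking smoothness at the ends twice.
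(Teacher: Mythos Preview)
Your approach is correct and aligns with the paper's, though the paper is considerably more terse. For part (i) the paper simply observes that $\pi^{-1}(a([0,1]))$ \emph{is} the quotient of Definition~\ref{lensspace} on the nose; your explicit $\mathrm{SL}_2(\Z)$ matching of collapsed circles is a more detailed version of the same observation, and the bookkeeping you flag (which circle collapses at which end, in which orientation) is indeed the only content.

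For part (ii) the paper uses precisely the Liouville argument you mention in passing as ``cleaner'' and then set aside: it takes the radial Liouville field $V = t_1\partial_{t_1} + t_2\partial_{t_2}$ for $\omega$, checks that $a\times a'>0$ is exactly the transversality of $V$ to $Y_a$, and concludes that $\iota_V\omega|_{Y_a} = a_1\,d\theta_1 + a_2\,d\theta_2$ is contact. The payoff of this route over your direct $\lambda_a\wedge d\lambda_a$ computation is that it dissolves the smooth-extension-at-the-ends problem you identified as your main obstacle: both $V$ and $\omega$ are globally defined smooth objects on $M(n,m)$ (away from the orbifold point, which the curve avoids), so $\iota_V\omega$ is automatically a smooth $1$-form on all of $Y_a$, including across the two collapsed tori. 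No separate check at the ends is needed. Your direct computation is of course also valid, but then the end analysis you started (and got tangled in) really does have to be carried out; the Liouville route bypasses it.
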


\begin{proof}
Notice that $\pi^{-1}(a([0,1]))$ is exactly the quotient described in the definition \ref{lensspace}. Now consider the Liouville vector field
$$V=t_1\partial_{\theta_1}+t_2\partial_{\theta_2}$$
of the symplectic $2$-form \eqref{symplecticform}. The conditions over the function $a$ ensure us that $V$ and $Y_a$ are transversal. Then $\iota_V\omega$ is a contact structure equal to 
$$t_1\dd \theta_1+t_2 \dd\theta_2$$
Replacing $t_1$ and $t_2$ by $(a_1,a_2)$ give us the equation \eqref{contacsstructure}.
\end{proof}

Following \cite{symington2002dimensions} we called the contact $3$-manifold described above a \textit{visible lens space} of $M(n,m)$. 

\begin{defi}
    \label{domain}
    A domain $\Omega$ in $V_{n,m}$ is a bounded subset of $V_{n,m}$ for which there exists a curve $a:[0,1]\rightarrow V_{n,m}$ with $a(0)=(a_0 n a_0 m)$ and $a(1)=(0,a_1 1)$  such that $\partial \Omega=a[0,1]\cup \{t_1(p,q):0\leq t_1\leq a_1 \}\} \cup\{t_2(0,1):0\leq t_1\leq a_2\}$. If no explicit use of the curve $a$ is needed we will write $\partial^+ \Omega = a[0,1]$.
\end{defi}

\begin{defi} Let $\Omega$ be a domain in $V_{n,m}$. We called the symplectic orbifold $X_\Omega\vcentcolon=\pi^{-1}(\Omega)$ a \textit{toric domain} in $M(n,m)$. We say that $X_\Omega$ is a \textit{concave toric domain} if the complement of  $\Omega$ in $V_{n,m}$ is a convex set. The lens space $\pi^{-1}(\partial^+\Omega)$ with its respective contact struture is called a concave lens space. See figure \ref{Toric Concave Domain}.
\end{defi}

\begin{defi} Let $X_\Omega$ be a  concave toric domain in $M(n,m)$. As in Definition \ref{domain} let $a:[0,1]\rightarrow V_{n,m}$ be a curve such that  $a(0)=(a_0 n, a_0 m)$, $a(1)=(0,a_1 1)$ and  $\partial \Omega=a[0,1]\cup \{t_1(p,q):0\leq t_1\leq a_1 \}\} \cup\{t_2(0,1):0\leq t_1\leq a_2\}$. We say that $X_\Omega$ is a \textit{rational toric domain} if $a'$ is rational whenever is defined.     
\end{defi}

\begin{figure}
    \centering
    \tikzset{every picture/.style={line width=0.75pt}} 

\begin{tikzpicture}[x=0.75pt,y=0.75pt,yscale=-1,xscale=1]

\draw  [draw opacity=0] (166,4) -- (470,4) -- (470,227) -- (166,227) -- cycle ; \draw  [color={rgb, 255:red, 194; green, 194; blue, 194 }  ,draw opacity=1 ] (166,4) -- (166,227)(186,4) -- (186,227)(206,4) -- (206,227)(226,4) -- (226,227)(246,4) -- (246,227)(266,4) -- (266,227)(286,4) -- (286,227)(306,4) -- (306,227)(326,4) -- (326,227)(346,4) -- (346,227)(366,4) -- (366,227)(386,4) -- (386,227)(406,4) -- (406,227)(426,4) -- (426,227)(446,4) -- (446,227)(466,4) -- (466,227) ; \draw  [color={rgb, 255:red, 194; green, 194; blue, 194 }  ,draw opacity=1 ] (166,4) -- (470,4)(166,24) -- (470,24)(166,44) -- (470,44)(166,64) -- (470,64)(166,84) -- (470,84)(166,104) -- (470,104)(166,124) -- (470,124)(166,144) -- (470,144)(166,164) -- (470,164)(166,184) -- (470,184)(166,204) -- (470,204)(166,224) -- (470,224) ; \draw  [color={rgb, 255:red, 194; green, 194; blue, 194 }  ,draw opacity=1 ]  ;
\draw    (226,204) -- (226,46) ;
\draw [shift={(226,44)}, rotate = 90] [color={rgb, 255:red, 0; green, 0; blue, 0 }  ][line width=0.75]    (10.93,-3.29) .. controls (6.95,-1.4) and (3.31,-0.3) .. (0,0) .. controls (3.31,0.3) and (6.95,1.4) .. (10.93,3.29)   ;
\draw    (226,204) -- (464.34,45.11) ;
\draw [shift={(466,44)}, rotate = 146.31] [color={rgb, 255:red, 0; green, 0; blue, 0 }  ][line width=0.75]    (10.93,-3.29) .. controls (6.95,-1.4) and (3.31,-0.3) .. (0,0) .. controls (3.31,0.3) and (6.95,1.4) .. (10.93,3.29)   ;
\draw [color={rgb, 255:red, 50; green, 0; blue, 255 }  ,draw opacity=1 ]   (226,64) .. controls (242,133) and (265,155) .. (406,84) ;

\end{tikzpicture}
    \caption{Example of a Toric Concave Domain}
    \label{Toric Concave Domain}
\end{figure}
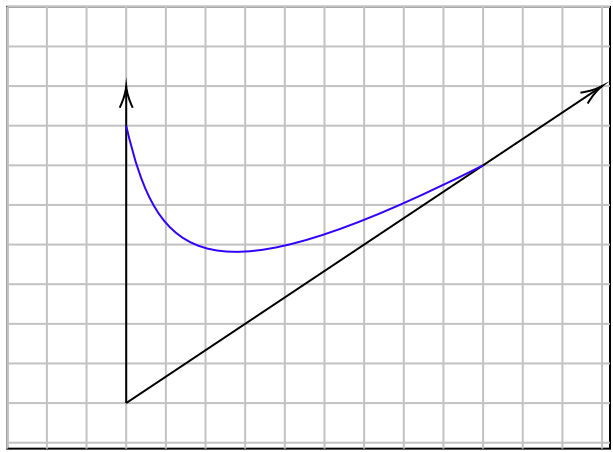
Let $a$ and $b$ be real positive numbers. Consider the domain $\Omega$ in $V_{n,m}$ defined as the convex hull of the vertices $(0,0), \frac{a}{m}(n,m)$ and $\frac{b}{m}(0,1)$. We denote $X_{\Omega}$ by $E_{n,m}(a,b)$ and we call it the \textit{ellipsoid with singularities} of periods $a$ and $b$. We define the ball with singularities as $B_{n,m}(a)\vcentcolon=E_{n,m}(a,b)$. To simplify notation we write $E_n (a,b)$ instead of $E_{n,1}(a,b)$. Notice that for $E_{n,m}(a,b)$ the elliptic orbit $e_1$ has period $a$ and $e_2$ has period $b$.

\subsection{ECH Capacities of toric domains in $M(n,m)$}

In this section we give a description of the ECH capacities of concave toric domains in $M(n,m)$. We begin by describing the capacities of ellipsoids with singularities. We give the proofs of the results of this section in Section \ref{ECH-of-Lens-Spaces}.

\subsubsection{ECH capacities of Ellipsoids with singularities.}

Given two real positive numbers $a,b$ and two positive integer numbers $n,m$, we define the sequence $N^{n,m}(a,b)$ as the sequence of numbers of the form $a k_1+bk_2$ such that there exist an integer $l$ such that $k_1+m k_2=ln$, the sequence $N^{n,m}(a,b)$ is organized by increasing order with repetitions. We denote the $k$-th number of the sequence $N^{n,m}(a,b)$ by $N_k^{n,m}(a,b)$. So the ECH capacities of ellipsoids with singularity are given by the following Lemma:

\begin{lemm}
\label{capacitiesellipsoids}
     The ECH capacities of an ellipsoid with singularities are given by the sequence defined above, i.e,

\begin{align}
\label{capLmn}
c_k(E_{n,m}(a,b))=N_k^{n,m}(a,b)    
\end{align}

where $a,b$ are positive real numbers and $n,m$ are positive integer numbers. 
\end{lemm}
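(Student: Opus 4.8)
The plan is to compute the embedded contact homology of the concave lens space $\partial^+E_{n,m}(a,b)$ and extract $c_k$ from its action filtration, in parallel with the computation $c_k(E(a,b))=N_k(a,b)$ from the Reeb dynamics on $\partial E(a,b)=\Sp^3$. First I would fix the contact geometry. By Lemma \ref{exilens}, taking $a$ to be the straight segment joining the two vertices of $\partial^+\Omega$ lying on the axes of $V_{n,m}$, the boundary $\partial^+E_{n,m}(a,b)$ is the contact lens space $(L(n,m),\lambda_a)$; for generic $a,b$ the slope of this segment is irrational, so $\lambda_a$ is nondegenerate and its Reeb flow has exactly the two embedded closed orbits $e_1,e_2$ — the exceptional fibers over the two endpoints — with symplectic actions $a$ and $b$, every other Reeb orbit being an iterate of one of these. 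Since $e_1,e_2$ are elliptic, the ECH chain complex of $(L(n,m),\lambda_a)$ is freely generated over $\Z$ by the orbit sets $e_1^{k_1}e_2^{k_2}$, $k_1,k_2\in\Z_{\ge 0}$, and the action of $e_1^{k_1}e_2^{k_2}$ is $ak_1+bk_2$. The non-generic values of $a,b$ are handled at the end by $C^0$-perturbing $a$ and invoking continuity and monotonicity of the capacities.

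Next is the homological bookkeeping, which produces the congruence condition. By the definition of ECH capacities recalled in Section \ref{ECHfoundations}, $c_k(E_{n,m}(a,b))$ is the smallest filtration level at which the class $\sigma_k$ appears, where $\sigma_0=[\emptyset]$ and $U\sigma_k=\sigma_{k-1}$; equivalently, the $c_k$ are the actions of the generators of the $U$-tower sitting in the grading of $\ECH(L(n,m),\lambda_a)$ that contains the unit $[\emptyset]$, namely the classes of orbit sets that are null-homologous in $E_{n,m}(a,b)$ (minus its singular point, or in a toric resolution). Since the isotropy group at the singular point of $M(n,m)$ is $\Z/n$, tracing the relevant homology through the construction of $M(n,m)$ shows that $e_1^{k_1}e_2^{k_2}$ lies in this grading precisely when $k_1+mk_2\equiv 0\pmod n$, i.e. when $k_1+mk_2=ln$ for some integer $l$ — exactly the index set defining $N^{n,m}(a,b)$. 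So the generators carrying the $U$-tower are the $e_1^{k_1}e_2^{k_2}$ with $k_1+mk_2=ln$, and each has action $ak_1+bk_2$.

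The main step — and the one I expect to be the real obstacle — is the ECH index computation: to show that if the $e_1^{k_1}e_2^{k_2}$ with $k_1+mk_2=ln$ are listed in order of increasing action $ak_1+bk_2$, then the $k$-th one has ECH index exactly $2k$. I would use the index formula $I=c_\tau+Q_\tau+\sum_i\sum_{j=1}^{m_i}\text{CZ}_\tau(\gamma_i^j)$, inserting the rotation numbers of $e_1,e_2$ (read off, as in the ellipsoid case, from the two edges of $\partial\Omega$ meeting at the corresponding corner) together with the relative first Chern class and relative self-intersection terms, which I would compute on an explicit toric resolution of $E_{n,m}(a,b)$ so as to control the contribution of the $\Z/n$ singular point — this orbifold contribution is the delicate part, the rest being the usual continued-fraction/lattice-point bookkeeping. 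The expected outcome is a lattice-count of the form $I(e_1^{k_1}e_2^{k_2})=2\,\#\{(j_1,j_2)\in\Z_{\ge 0}^2 : j_1+mj_2\equiv 0\ (\mathrm{mod}\ n),\ aj_1+bj_2\le ak_1+bk_2\}-2$, the direct analogue of the $\Sp^3$ formula. Granting this, all generators in the relevant grading have even index, so the ECH differential vanishes there ($\ECH=\ECC$) and the $U$-map sends the index-$2k$ generator to the index-$2(k-1)$ one; hence $\sigma_k=e_1^{k_1}e_2^{k_2}$ with $k_1+mk_2=ln$ and ECH index $2k$, and therefore $c_k(E_{n,m}(a,b))=ak_1+bk_2=N_k^{n,m}(a,b)$, first for generic $a,b$ and then for all positive $a,b$ by continuity.
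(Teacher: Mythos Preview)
Your proposal is correct and follows essentially the same route as the paper: identify the two elliptic orbits on the irrational boundary lens space, single out the null-homologous orbit sets by the congruence $k_1+mk_2\equiv 0\pmod n$, prove that the ECH index gives a bijection between these generators and the even integers so that the differential vanishes, and read off the capacities from the action filtration via Lemma~\ref{capsimp}. The one point of divergence is your plan to compute $I(e_1^{k_1}e_2^{k_2})$ on a toric resolution to handle the orbifold contribution; the paper instead works directly on $L(n,m)$, building an explicit representative surface in $[-1,1]\times Y$ (Lemma~\ref{explicitindex}) and then converting the resulting $c_\tau+Q_\tau+\mathrm{CZ}$ expression into a lattice-point count in the cone $V_{n,m}$ via Pick's theorem (Proposition~\ref{bijectieven}), which yields exactly your anticipated formula without ever resolving the singularity.
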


We give a proof of Lemma \ref{capLmn} in Section \ref{ECH-of-Lens-Spaces}.

Of particular interest for us is the case when $m=1$. Lets describe this case in more detail. To simplify notation write $N^n(a,b):=N^{n,1}(a,b)$. Notice that by Lemma \ref{capacitiesellipsoids} the sequence $N^n(a,b)$ correspond to the ordered sequence of numbers of the form $ar+bs$ such that $r+s$ is a multiple of $n$ with repetitions. Then 
\begin{equation}
\label{caplen1p}
c_k(E_n(a,b))=N^n_k(a,b)    
\end{equation}
For any positive integer $k$. Using these results we can recover some of the result shown in \cite{Ferreira_2022} as described in the following examples.

\begin{exa}
\label{S2}
In the above example take $n=2$ and $a=b=1$ then $$c_k(B_2(1))=N^2_k(1,1)=(0,2,2,2,4,4,4,4,4,6,6,6,6,6,6,6, \dots)$$
Notice that $2\pi c(B_2(1))$ are exactly the capacities for $D^*\Sp^2$ calculated in \cite{Ferreira_2022}.  Using the observations of \cite{wu2015exotic} in Section 3. it can be shown that after a rational blown down $2\pi B_2(1)$ is symplectomorphic to $D^*\Sp^2$. This recovers part $(i)$ of Theorem 1.3 of \cite{Ferreira_2022}. 
It is easy to see that from the moment map that $\text{int } B(1)\xhookrightarrow{s} B_2(1)$ from which part (i) of theorem 1.1 of \cite{Ferreira_2022} follows. Futhermore, after a toric mutation we can see from the diagram that $P(1,1)\xhookrightarrow{s} B_2(1)$ which recorvers part $(iv)$ of Theorem 1.1 of \cite{Ferreira_2022}.
\end{exa}
\begin{exa}
\label{RP2}
Similarly take $n=4$ and $a=b=1$ then
    $$c_k(B_4(1))=N^4_k(1,1)=(0,4,4,4,4,4,8,8,8,8,8,8,8,8,8, \dots)$$
and notice that $\pi c(B_4(1))$ are exactly the capacities of $D^*\mathbb{R}P^2$. As in the previous case we can use the observations of \cite{wu2015exotic} in Section 3 to prove that $\pi B_4(1)$  after a rational blown down trade is symplectomorphic $D^*\mathbb{R}P^2$. This recovers part (ii) of Theorem 1.3 of \cite{Ferreira_2022}.

Using a toric mutations \cite{casals2022full} and noticing that the pair $(4,1)$ is of the form $(k^2,kl-1)$ we can see that $B(1)\xhookrightarrow{s} B_4(1)$ from which part (ii) of Theorem 1.1 of \cite{Ferreira_2022} follows.

\end{exa}

\subsubsection{ECH capacities of concave toric domains on $M(n,m)$}

Now we give a combinatorial description of the ECH capacities of concave toric domains on $M(n,m)$. This formula is similar to the one given in \cite[Sec.~ 1.6]{Choi_2014}.

\begin{defi}
    \label{(n,m)-concave path}
    A \textit{$(n,m)$-concave polygonal path} $P$ is a piecewise linear continuos path $\Lambda$ in the $xy$-plane with starting point a lattice point in the line $\{t(p,q):t\geq 0\}$ and end point at a lattice point the $y$-axis, and, $P$ is concave in the sense that it lies above any of the tangent lines at its smooth points. 
\end{defi}

\begin{defi}
\label{Counting points}
    If $\Lambda$ is a concave $(n,m)$-integral path, define $\mathcal{L}_{n,m}(\Lambda)$ to be the number of lattice points in the region bounded by $\Lambda$, the $y$-axis and the ray $\{t(n,m):t\geq 0\}$. Without counting the points in $\Lambda$.
\end{defi}

\begin{defi}
\label{omegalength}
        Let $X_\Omega$ be the concave toric domain in $M(n,m)$. Suppose that $\Lambda$ is a concave $(n,m)$-integral path, define the $\Omega$-\textbf{length} of $\Lambda$, as follows. For each edge $v$ let $p_v$ be a point in $\partial^+\Omega$ such that $\Omega$ is contained in the closed half-plane above the line through $p_v$ parallel to $v$. Then 
    $$l_{\Omega}(\Lambda)=\displaystyle\sum_{v\in \textit{Edges}(\Lambda)} v\times p_v$$
    Here $\times$ denote the cross product. Note that if $p_v$ is not unique then the value $v\times p_v$ does not depend on the choice of $p_v$. 
\end{defi}

\begin{theo}
    \label{concavecapacities}
    If $X_\Omega$ is any rational concave toric domain of $M(n,m)$, then its $ECH$ capacities are given by 
    $$c_k(X_\Omega)=\max\{l_\Omega(\Lambda):\mathcal{L}(\Lambda)=k\} $$
Here the maximum is over concave $(n,m)$-integral paths $\Lambda$.
\end{theo}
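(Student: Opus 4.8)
The plan is to prove the theorem for rational concave toric domains by adapting the strategy of \cite[\S1.6]{Choi_2014}, the one genuinely new feature being the orbifold point of $M(n,m)$ over the vertex $(0,0)$, which contributes an \emph{ellipsoid with singularities} whose capacities are already supplied by Lemma \ref{capacitiesellipsoids}. First I would reduce the computation of $c_k(X_\Omega)$ to that of a disjoint union of model pieces. Since $X_\Omega$ is rational, $\partial^+\Omega$ is a concave polygonal path and the complement of $\Omega$ in $V_{n,m}$ is convex, so one can run the standard ``remove the largest corner triangle'' recursion: at the corner $(0,0)$ -- the unique singular point of $M(n,m)$ -- the maximal inscribed triangle is an affine copy of the triangle defining some $E_{n,m}(a_0,b_0)$, while at the corner on the $y$-axis, at the corner on the ray $\{t(n,m):t\ge 0\}$, and at each vertex of $\partial^+\Omega$ the removed triangles (and all those produced by continuing the recursion, which now takes place away from the singular point) are affine-standard, hence correspond to honest balls $B(w_i)$. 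Reading off the moment-map picture, and using toric mutations (cf.\ \cite{symington2002dimensions}) where convenient, one obtains a symplectic embedding $\operatorname{int} E_{n,m}(a_0,b_0)\sqcup\coprod_i\operatorname{int} B(w_i)\xhookrightarrow{s} X_\Omega$, so by monotonicity $c_k(X_\Omega)\ge c_k\!\big(E_{n,m}(a_0,b_0)\sqcup\coprod_i B(w_i)\big)$.

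The reverse inequality $c_k(X_\Omega)\le c_k\!\big(E_{n,m}(a_0,b_0)\sqcup\coprod_i B(w_i)\big)$ is the crux of the argument, and I expect it to be the main obstacle. Here I would compute $\ECH_*(\partial X_\Omega)$ directly from the combinatorial description of the ECC of a concave lens space developed earlier in the paper: after a $C^0$-small perturbation of $\partial^+\Omega$ making the contact form on the boundary lens space nondegenerate, the Reeb orbits are the elliptic orbits attached to the edges of the polygonal path $\partial^+\Omega$ together with hyperbolic orbits at its vertices, the ECH generators are the admissible orbit sets built from these, and $c_k(X_\Omega)$ is the minimal symplectic action of a generator onto which $U^k$ maps the generator at the bottom of the filtered ECH. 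Comparing these actions with the ones produced by the model packing -- equivalently, running the volume/sharpness argument of \cite{Choi_2014} in the present orbifold setting -- gives the desired upper bound. It is precisely at this step that the combinatorial computation of the ECC of concave lens spaces does the work.

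It then remains to identify $c_k\!\big(E_{n,m}(a_0,b_0)\sqcup\coprod_i B(w_i)\big)$ with $\max\{l_\Omega(\Lambda):\mathcal{L}_{n,m}(\Lambda)=k\}$. By the disjoint union property, Lemma \ref{capacitiesellipsoids}, and the ellipsoid property applied to $B(w_i)=E(w_i,w_i)$,
\[
c_k\Big(E_{n,m}(a_0,b_0)\sqcup\coprod_i B(w_i)\Big)=\max_{k_0+\sum_i k_i=k}\Big(N^{n,m}_{k_0}(a_0,b_0)+\sum_i N(w_i,w_i)_{k_i}\Big).
\]
To match this with the right-hand side of the theorem I would set up the usual dictionary between concave $(n,m)$-integral paths and these packings: a path $\Lambda$ cuts off a region that admits the same weight decomposition -- a scaled singular triangle near $(0,0)$ together with scaled standard triangles $d_iT_i$ near the smooth corners -- so that $\mathcal{L}_{n,m}(\Lambda)$ splits as a sum of lattice-point counts of these pieces, while $l_\Omega(\Lambda)$ splits as a singular term plus $\sum_i w_i d_i$. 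Using the elementary identities $N(w,w)_k=w\cdot(\text{lattice count in the associated scaled standard triangle})$ and, near the singular corner, that $N^{n,m}_k(a_0,b_0)$ is computed by the lattice points of the scaled singular triangle -- where the congruence $k_1+mk_2\equiv 0\pmod n$ defining $N^{n,m}$ is exactly what selects the lattice points that $\mathcal{L}_{n,m}$ counts in the sector bounded by $\{t(n,m):t\ge 0\}$ -- converts the displayed maximum into $\max\{l_\Omega(\Lambda):\mathcal{L}_{n,m}(\Lambda)=k\}$. This last part is routine bookkeeping once the correspondence is fixed, entirely parallel to \cite[\S1.6]{Choi_2014}; the genuine difficulty is the upper bound of the previous paragraph.
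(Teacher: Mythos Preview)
Your approach differs substantially from the paper's, and in a way that inverts the logical flow. The paper proves Theorem \ref{concavecapacities} \emph{directly} from the combinatorial ECC model (Proposition \ref{equicomgeo}), with no reference to ball packing or weight expansions: after a concave perturbation pushing the indices of the special orbits $e_+,e_-$ to infinity, the generators of $\ECC_{2k}$ are exactly the decorated $(n,m)$-concave paths with $2\mathcal L_{n,m}(P_\alpha)+h=2k$ and action approximately $l_\Omega(P_\alpha)$; via Lemma \ref{capsimp} and Corollary \ref{ECHlenscol} one then only needs to identify the minimal non-nullhomologous cycles in grading $2k$, and the paper shows (using the corounding-the-corner description of $\partial$) that the \emph{unique} such cycle is the sum of all purely elliptic paths with $\mathcal L_{n,m}=k$, whence $c_k=\max\{l_\Omega(\Lambda):\mathcal L_{n,m}(\Lambda)=k\}$. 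The ball-packing theorem of Section \ref{ballpacking} is then a \emph{consequence} of Theorem \ref{concavecapacities}, not an input to it, and is stated and proved only for $m=1$.

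Your plan runs this backwards, and there are two concrete gaps. First, the weight expansion and packing you invoke are not available for general $(n,m)$: the recursion in Section \ref{weightexpansions} depends on the specific $\mathrm{SL}_2(\mathbb Z)$ matrix $\begin{pmatrix}0&1\\-1&n\end{pmatrix}$ that straightens the residual piece into a standard concave domain, and this works precisely because $m=1$; for arbitrary $(n,m)$ you would have to devise a new decomposition, prove the Traynor-type embedding, and establish the combinatorial identity of your Step 3 from scratch. Second, your upper bound $c_k(X_\Omega)\le c_k(\text{packing})$ cannot be obtained by ``comparing actions with the model packing'' without actually computing $c_k(X_\Omega)$ from the ECC; but once you carry out that computation faithfully (exhibit the cycle, check it is closed and not a boundary, and---crucially---that it is \emph{minimal}) you have already produced the formula $\max\{l_\Omega(\Lambda):\mathcal L(\Lambda)=k\}$, and the packing detour becomes redundant. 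Note also that $c_k$ is not the ``minimal symplectic action of a generator'' as you write: by Lemma \ref{capsimp} it is the minimum, over minimal non-nullhomologous cycles, of the \emph{maximum} action among the summands---which is exactly why the uniqueness argument in the paper's proof is the essential step rather than a formality.
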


The proof of Theorem \ref{concavecapacities} is given at the end of Section \ref{ECH-of-Lens-Spaces}.

\subsection{Ball Packing}
\label{ballpacking}

In this section we specialized to the orbifolds $M(n,1)$. Here we want to describe the ECH capacities of a toric concave domain in $M(n,1)$ as the capacities of the disjoint union of balls with singularities.

\subsubsection{Weight Expansions}
\label{weightexpansions}

Suppose that $X_\Omega$ is a concave domain in $M(n,1)$. The \textit{weight expansion} of $\Omega$ is a finite unordered list of (possibly repeated) positive real numbers $w(\Omega)=\{a,a_1,\dots, a_n\}$ analogous to the weight expansion of a concave toric domain. Since we will need this definition we recall it here.

\textbf{Weight expansion of the usual concave toric domain in $\mathbb{C}^2$:} Suppose that $X_\Omega$ is a concave toric domain in $\mathbb{C}^2$. The \textit{weight expansion} $w(\Omega')$ is defined as follows (see \cite{Choi_2014} section 1.3). 

If $\Omega'$ is the triangle with vertices $(0,0),(a,0)$ and $(0,a)$ then $w(\Omega)=(a)$. 

Otherwise, let $a>0$ be the largest real number such that the triangle with vertices $(0,0),(a,0)$ and $(0,a)$ is contained in $\Omega$. Call this triangle $\Omega'_1$. The line $x+y=a$ intersect $\partial \Omega$ in a line segments from $(x_2,a-x_2)$  to $(x_3,a-x_3)$ with $x_2\leq x_3$. Let $\Omega''_2$ the portion of $\Omega$ above the line $x+y=a$ and to the left of the line $x=x_2$. By applying the translation $(x,y)\rightarrow (x,y-a)$ to $\Omega''_2$ and then multiplying by $\begin{pmatrix} 1 & 0 \\ 1 & 1 \end{pmatrix}$  we find a new domain $\Omega_2$ (which we interpret as the empty set if $x_2=0$). Let $\Omega''_3$ denote the portion of $\Omega'$ above the line $x+y=a$ and to the right of the line $x=x_3$. By first applying the translation $(x,y)\rightarrow (x-a,y)$ and then multiplying by $\begin{pmatrix} 1 & 1 \\ 0 & 1 \end{pmatrix}\in \text{SL}_2(\mathbb{Z})$. We now define 
$$w(\Omega')=w(\Omega''_1)\cup w(\Omega''_2)\cup w(\Omega''_3)$$

\textbf{Weight expansion of a rational concave toric domain in $M(n,1)$:} To define the weight expansion of a rational concave  toric domain $X_\Omega$ in $M(n,1)$ we proceed as follows. If $\Omega$ is the triangle with vertices $(0,0),(na,a)$ and $(0,a)$ then $w(\Omega)=(a)$. 

Otherwise let $a>0$ be the largest real number such that the triangle $(0,0),(na,a)$ and $(0,a)$ is contained in $\Omega$. Call this triangle $\Omega_1$. The line $y=a$ intersect $\partial \Omega$ in a line segment from $(x_2,a)$ to $(x_3,a)$ with $x_2\leq x_3$. Let $\Omega'_2$ denote the portion of $\Omega$ above the line $y=a$ and to the left of $x=x_2$. By applying the translation $(x,y)\rightarrow (x,y-a)$ to $\Omega'_2$ we obtain a new domain $\Omega_2$. Let $\Omega'_3$ denote the portion of $\Omega$ above the line $y=a$ and to the right of the line $x=x_3$. By first applying the translation $(x,y)\rightarrow (x-a,y-a)$ and then multiplying by $\begin{pmatrix} 0 & 1 \\ -1 & n \end{pmatrix}\in \text{SL}_2(\mathbb{Z})$ we obtain a new domain $\Omega_3$. Notice that we can interpret $X_{\Omega_2}$ and $X_{\Omega_3}$ as concave toric domains. We define the weight expansion of $\Omega$ as 
$$w(\Omega)=w(\Omega_1)\cup w(\Omega_2)\cup w(\Omega_3)$$
where in this case union means union with repetitions. 

\subsubsection{Packing Theorem}
In this section we describe and prove the ball with singularities packing. 

\begin{theo} Let $X_\Omega$ be a rational concave toric domain in $M(n,1)$ with weight expansion $w(\Omega)=(a_1,a_2\dots,a_s)$ then 
$$c_k(X_\Omega)=c_k\left(B_{n}(a_1)\amalg \coprod_{k=2}^s B(a_n)\right)$$ 
\end{theo}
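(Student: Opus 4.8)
The plan is to mimic the proof of the analogous ball-packing theorem for classical concave toric domains in \cite{Choi_2014}, reducing everything to the capacity formula of Theorem \ref{concavecapacities} together with the disjoint-union property of ECH capacities. First I would establish the base case: if $\Omega$ is the triangle with vertices $(0,0),(na,a),(0,a)$, then by definition $X_\Omega = B_n(a)$ and $w(\Omega)=(a)$, so the identity holds trivially. For the inductive step, I would argue by induction on the number of linear pieces in $\partial^+\Omega$ (or, for the genuinely non-rational-polygonal case, by a limiting argument using continuity of both sides in $\Omega$, which follows from the explicit combinatorial formulas). Given $\Omega$ not a triangle, extract the maximal inscribed triangle $\Omega_1$ and the two leftover regions $\Omega_2,\Omega_3$ as in the definition of the weight expansion; by the inductive hypothesis each of $X_{\Omega_2},X_{\Omega_3}$ (now ordinary concave toric domains in $\mathbb{C}^2$) satisfies the classical Choi packing identity, so it suffices to prove
\begin{equation}
\label{packing-reduction}
c_k(X_\Omega) = c_k\bigl(B_n(a)\amalg X_{\Omega_2}\amalg X_{\Omega_3}\bigr)
\end{equation}
for all $k$, where $a$ is the size of the inscribed triangle.

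To prove \eqref{packing-reduction} I would work entirely on the side of the combinatorial formulas. By Theorem \ref{concavecapacities}, $c_k(X_\Omega)$ is the maximum of $l_\Omega(\Lambda)$ over $(n,m)$-concave integral paths $\Lambda$ with $\mathcal{L}_{n,m}(\Lambda)=k$. The key geometric observation, exactly parallel to \cite[Sec.~1.3--1.6]{Choi_2014}, is that an optimal concave path for $\Omega$ decomposes: its edges that "see" the inscribed triangle contribute a sub-path governing $B_n(a)$, and the remaining edges, after the same translations and $\mathrm{SL}_2(\mathbb{Z})$ changes of coordinates used to define $\Omega_2$ and $\Omega_3$, become concave integral paths for those sub-domains. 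Both the lattice-point count $\mathcal{L}$ and the $\Omega$-length $l_\Omega$ are additive under this decomposition and invariant under the affine unimodular maps involved (lattice points go to lattice points, cross products are preserved by $\mathrm{SL}_2(\mathbb{Z})$, and the defining vertex $(na,a)$ of the orbifold corner is precisely what the matrix $\left(\begin{smallmatrix}0&1\\-1&n\end{smallmatrix}\right)$ is chosen to respect). Hence maximizing $l_\Omega$ over paths with $\mathcal{L}=k$ is the same as maximizing the sum of lengths over triples of paths for $(B_n(a),X_{\Omega_2},X_{\Omega_3})$ with total lattice-point count $k$, which by the disjoint-union formula for ECH capacities and Lemma \ref{capacitiesellipsoids} (together with Theorem \ref{concavecapacities} applied to $X_{\Omega_2},X_{\Omega_3}$) is exactly the right-hand side of \eqref{packing-reduction}.

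I would then assemble the pieces: combining \eqref{packing-reduction} with the inductive hypothesis applied to $\Omega_2$ and $\Omega_3$, and using associativity of $\amalg$ and the disjoint-union axiom, yields $c_k(X_\Omega)=c_k\bigl(B_n(a_1)\amalg\coprod_{j=2}^s B(a_j)\bigr)$ with $\{a_1,\dots,a_s\}=w(\Omega)$, where $a_1=a$ is the size of the first inscribed triangle and the remaining $a_j$ come from the weight expansions of $\Omega_2,\Omega_3$. The main obstacle I anticipate is the second paragraph: making the path-decomposition argument rigorous, i.e.\ verifying that an extremal concave integral path for $\Omega$ can always be chosen to split cleanly along the line $y=a$ and the vertical lines $x=x_2$, $x=x_3$ without losing lattice points or length, and conversely that any triple of paths for the three sub-domains glues back to an admissible concave path for $\Omega$ with the correct count. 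This is where one must be careful about edges straddling the cut lines and about the concavity condition at the seams; the classical case in \cite{Choi_2014} handles this, and the only genuinely new point is checking that the $\mathrm{SL}_2(\mathbb{Z})$-transformation defining $\Omega_3$ carries the $(n,m)$-concave structure (anchored at the ray $\{t(n,m)\}$) to the ordinary concave structure anchored at the $x$-axis, which is a direct computation with the matrix $\left(\begin{smallmatrix}0&1\\-1&n\end{smallmatrix}\right)$.
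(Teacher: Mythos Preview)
Your inductive architecture and the path-decomposition half of the argument match the paper's proof almost exactly: the paper also reduces to Theorem~\ref{concavecapacities}, fixes a path $\Lambda$ with $\mathcal{L}(\Lambda)=k$, cuts it along the longest horizontal segment at height $A$, and applies the translations and the matrix $\left(\begin{smallmatrix}0&1\\-1&n\end{smallmatrix}\right)$ to produce $\Lambda_1,\Lambda_2,\Lambda_3$ with $k_1+k_2+k_3=k$ and $l_{\Omega_1}(\Lambda_1)+l_{\Omega_2}(\Lambda_2)+l_{\Omega_3}(\Lambda_3)=l_\Omega(\Lambda)$. This gives $c_k(X_\Omega)\le c_k(W_1\amalg W_2\amalg W_3)$ by induction, exactly as you outline.

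The genuine difference is in the reverse inequality. You propose to obtain $c_k(B_n(a)\amalg X_{\Omega_2}\amalg X_{\Omega_3})\le c_k(X_\Omega)$ by \emph{gluing}: showing that any optimal triple $(\Lambda_1,\Lambda_2,\Lambda_3)$ reassembles into an admissible $(n,1)$-concave path for $\Omega$. The paper does \emph{not} do this; it invokes the Traynor trick to produce an actual symplectic embedding $B_n(a_1)\amalg\coprod_{j\ge 2}B(a_j)\hookrightarrow X_\Omega$ and then appeals to monotonicity. This is a one-line geometric argument that completely bypasses the gluing problem you flag as your main obstacle. Your concern about that obstacle is well placed: an arbitrary optimal $\Lambda_1$ for $B_n(a)$ need not be a horizontal segment, so there is no canonical slot into which to insert it between the inverse images of $\Lambda_2$ and $\Lambda_3$, and concavity at the seams is not automatic. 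One can likely push a combinatorial gluing through (by first arguing that an optimal $\Lambda_1$ for the triangle can always be taken horizontal of the right length), but the paper's route via the Traynor embedding is both shorter and more robust, and you should be aware that this is how the ``easy'' inequality is actually handled.
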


\begin{proof}
Similar to \cite{Choi_2014} we can use the Traynor trick to prove that 

$$B_{n}(a_1)\amalg \coprod_{j=2}^s B(a_j)\hookrightarrow X_\Omega$$ 

Which proves that 

$$c_k\left(B_{n}(a_1))\amalg \coprod_{j=2}^s B(a_j)\right)\leq c_k(X_\Omega)$$

Notice that for $n=1$ this is just the capacities of the ball with singularities $B_n(a)$.

To prove the other inequality we can use induction over the length of the weight expansion. 

Suppose that $n>1$ and let $\Omega_1,\Omega_2$ and $\Omega_3$ be the regions as defined in the weight expansion. Let $W_1,W_2$ and $W_3$ be the disjoint union of the balls defined by the ball packings of $X_{\Omega_1}, X_{\Omega_2}$ and $X_{\Omega_3}$.

To prove the claim it is enough to prove that for every $\Lambda$ with $\mathcal{L}(\Lambda)=k$ there exist $\Lambda_1,\Lambda_2$ and $\Lambda_3$ with 
\begin{equation}
\label{sumlat}
k_1+k_2+k_3=k    
\end{equation}
where $\mathcal{L}(\Lambda_i)=k_i$ and
\begin{equation}
\label{sumnorm}
  l_{\Omega_1}(\Lambda_1)+l_{\Omega_2}(\Lambda_2)+l_{\Omega_3}(\Lambda_3)=l_{\Omega}(\Lambda)  
\end{equation}

Because it follows that
$$l_{\Omega}(\Lambda)=l_{\Omega_1}(\Lambda_1)+l_{\Omega_2}(\Lambda_2)+l_{\Omega_3}(\Lambda_3)\leq c_{k_1}(X_{\Omega_1})+c_{k_2}(X_{\Omega_2}) +c_{k_3}(X_{\Omega_3})$$
and by induction hypothesis
$$c_{k_1}(X_{\Omega_1})+c_{k_2}(X_{\Omega_2}) +c_{k_3}(X_{\Omega_3})\leq c_{k_1}(W_1)+c_{k_2}(W_2) +c_{k_3}(W_3)$$
therefore
$$l_{\Omega}(\Lambda)\leq c_{k}(W_1\amalg W_2 \amalg W_3) $$

Then the result follows from Theorem \ref{concavecapacities}.

\textit{Construction of $\Lambda_1,\Lambda_2$ and $\Lambda_3$:} 

The construction of $\Lambda_1,\Lambda_2$ and $\Lambda_3$ is similar to the $\Omega_1,\Omega_2$ and $\Omega_3$ are obtained in the definition of the weight expansion. We define $\Lambda_1$ to be the longest horizontal line contained in the compact space defined by $\Lambda$ and the lines $\{t(0,1):t\geq 0\}$ and $\{t(p,1):t\geq 0\}$. Denote by $A$ the real number such that $\Lambda_1$ hits $A(0,1)$. Notice that $\Lambda_1$ divide $\Lambda$ in two pieces $\Lambda'_1$ and $\Lambda'_2$. Define $T_2:\mathbb{R}^2\rightarrow \mathbb{R}^2$ as the translation by $(0,-a)$ and $T_3:\mathbb{R}^2\rightarrow \mathbb{R}^2$ as the map obtained when translating by $-a(1,2)$ and then multiplying by $\begin{pmatrix} 0 & 1 \\ -1 & n \end{pmatrix}$. Now we define $\Lambda_2=T_1(\Lambda_2')$ and $\Lambda_3=T_3(\Lambda_3')$.

Writte $k_i=\mathcal{L}(\Lambda_i)$. Notice that the functions $T_2$ and $T_3$ preserve lattice points. It follows that $k_1+k_2+k_3=k$. This proves (\ref{sumlat}).

To prove (\ref{sumnorm}) notice that for any vector $v$, we have that 
\begin{align*}
    l_{\Omega_2}(v)=l_{\Omega}(v)-v\times a(0,1) \\
    l_{\Omega_3}(v)=l_{\Omega}(v)-v\times a(p,1)
\end{align*}
and that $l_{\Omega_1}(0,1)=a=(1,0)\times a(0,1)=(1,0)\times a(p,1)$. So by summing over all edges of $\Lambda$ and using the equations above, we conclude that
$$l_\Omega(\Lambda)=l_\Omega(\Lambda_1)+l_\Omega(\Lambda_2)+l_\Omega(\Lambda)$$
which is the equation \eqref{sumnorm}. We concluded the proof. 
\end{proof}

Notice that by taking $n=1$ we recover \cite[Theo.~1.4]{Choi_2014}. Which is the standard ball packing theorem for concave domains. 

\subsection{Idea of the proof of Theorem \ref{concavecapacities}}

In principle, Theorem \ref{concavecapacities} can be proved using combinatorial methods, as in \cite{Choi_2014}. Here, we take a different approach. Using ideas from \cite{hutchings2006rounding,keon} we construct a combinatorial model for the embedded contact complex of a concave lens space, as explain in Proposition \ref{equicomgeo}. With this combinatorial model, the ECH capacities can be interpreted combinatorially as well. As it is explained in Section \ref{ECHspectrum} the ECH capacities of a Liouville domain are define as the ECH spectrum of its boundary. For concave toric domains in $M(n,m)$ the ECH capacities are the ECH spectrum of a concave lens space $L(n,m)$.

The embedded contact homology of a lens space allows a convenient simplification of the definition of the ECH spectrum, as explained in Corollary \ref{ECHlens} and Lemma \ref{capsimp}. Therefore, the calculation of the ECH capacities of a toric domain is equivalent to finding all the non-nullhomologous sums of generators with an even ECH index. For the case of concave lens spaces, we prove that the ECH differential can be described using a combinatorial operation called \textit{corounding the corner}, and using this combinatorial description, we can classify all the non-nullhomologous sums of generators as described at the end of Section \ref{ECH-of-Lens-Spaces}.






\section{Foundations of Embedded Contact Homology}
\label{ECHfoundations}

Let $Y$ be a  closed contact $3$-manifold with a contact form $\lambda$, that is,  $\lambda$ is a $1$-form such that $\lambda\wedge\dd \lambda>0$, and let $\xi=\ker \lambda$ be the contact structure. The Reeb vector field $R_{\lambda}$ is the unique vector field in $Y$ satisfying:
\begin{align*}
    \iota_{R_{\lambda}}\dd\lambda=0  \text{ and } & \lambda(R_\lambda)=1 
\end{align*}

We denote by $\phi_t$ the flow of $R_{\lambda}$ which is usually called the Reeb flow. A closed orbit of $\phi_t$ is called a Reeb orbit. A Reeb orbit $\gamma:\mathbb{R}/T\mathbb{Z}\rightarrow Y$ with period $T>0$ is nondegenerate when the linearized return map $P_\gamma:=\dd \phi_T|_\xi: \xi_{\gamma(0)}\rightarrow \xi_{\gamma(0)}$ does not admit $1$ as an eigenvalue. The contact form $\lambda$ is nondegenerate if all Reeb orbits are nondegenerate. Suppose that $\lambda$ is nondegenerate. Since $P_\lambda$ is a linear symplectomorphism, it turns out that the Reeb vector field admits three types of closed orbits:

\begin{enumerate}
    \item \textit{Elliptic}: orbits $\gamma$ such that the eigenalues of the linearized return map $P_\gamma$ are norm one complex numbers.
    \item \textit{Positive hyperbolic}: when the eigenvalues of $P_\gamma$ are positive real numbers.
    \item \textit{Negative hyperbolic}: When eigenvalues of $P_\gamma$ are negative real numbers. 
\end{enumerate}

An orbit set $\alpha=\{(\alpha_i,m_i)\}$ is a finite set, where $\alpha_i$ are distinct embedded Reeb orbits on $Y$ and $m_i$ are positive integers. An admissibe orbit set is an orbit set such that $m_i=1$ whenever $\alpha_i$ is hyperbolic. We denote the homology class of an orbit set $\alpha$ by 
$$[\alpha]=\displaystyle\sum m_i [\alpha_i]\in H_1(Y)$$
For a fixed $\Gamma\in H_1(Y)$, and a generic almost complex structure $J$ on $\mathbb{R}\times Y$ compatible with its symplectic structure, the chain complex $\text{ECC}_*(Y,\lambda,\Gamma,J)$ is the $\mathbb{Z}_2$-vector space generated by the admissible orbits set in homology class $\Gamma$, and its differential counts certain $J$-holomorphic curves in $\mathbb{R}\times Y$, as explained below. This chain complex gives rise to the embedded contact homology $\text{ECH}_*(Y,\lambda,
\Gamma,J)$. Taubes proved \cite{TaubesECHSW}  that $\text{ECH}_*(Y,\lambda,
\Gamma,J)$ is isomorphic to a version of Seiberg-Witten Floer cohomology $\Hat{HM}^{-*}(Y,\mathfrak{s}_\xi+PD(\Gamma))$. In particular, $\text{ECH}_*(Y,\lambda,\Gamma,J)$ does not depend on $\lambda$ or $J$, and so write $ECH_*(Y,\xi,\Gamma)$.

In the next sections we will need to consider relative homological classes that relate two different orbit set $\alpha$ and $\beta$ with the same homology. To be precise we denote by $H_2(Y,\alpha,\beta)$ the affine space over $H_2(Y)$ that consist of $2$-chains $\Sigma$ in $Y$ with 
$$\partial \Sigma=\displaystyle\sum_im_i\alpha_i-\displaystyle\sum_jn_j\beta_j$$
modulo boundaries of $3$-chains. We call $H_2(Y,\alpha,\beta)$ the \textit{relative second homology of $\alpha$ and $\beta$}.

Several of the definitions of ECH are a bit delicate. Because of that we dedicate some more  subsections to properly define the different parts that constitute 
 this homology.

\subsection{The \text{ECH} index}

We denote by $H_2(Y,\alpha,\beta)$ the affine space over $H_2(Y)$ that consist of $2$-chains $\Sigma$ in $Y$ with 
$$\partial \Sigma=\displaystyle\sum_im_i\alpha_i-\displaystyle\sum_jn_j\beta_j$$
modulo boundaries of $3$-chains. We call $H_2(Y,\alpha,\beta)$ the \textit{relative second homology of $\alpha$ and $\beta$}.

In this section we define the ECH index which is an interger number associated to a pair of Reeb sets with the same homology and a relative homological class of these two Reeb sets. The ECH index gives the gradding of the embedded contact homology. An interesting feature of the ECH index is that it is the sum of three terms that we define below, the Relative Conley-Zenhder index, the relative Chern Class and the relative intersection number, each of these terms depend on the trivialization of the contatc structure over the Reeb orbits, but the ECH index itself does not.  

Since trivialization play an essential role in the concepts we will introduce, we need to add some notations. We denote by $\mathcal{T}(\gamma)$ the set of homotopy classes of symplectic trivialization of $\xi|_{\gamma}$. This is an affine space over $\mathbb{Z}$: given two trivializations $\tau_1,\tau_2:\xi|_{\gamma}\rightarrow \mathbb{S}^1\times \mathbb{R}^2$, we denote by $\tau_1-\tau_2$ the degree of $\tau_1\circ \tau_2^{-1}:\Sp^1\rightarrow \text{Sp}(2,\mathbb{R})\cong \mathbb{S}^1$. Let $\alpha=\{(\alpha_1,m_i)\},\beta=\{(\beta_j,n_j)\}$ be two orbit sets. If $\tau\in \mathcal{T}(\alpha,\beta):=\prod_i\mathcal{T}(\alpha_i)\times \prod_j \mathcal{T}(\beta_j)$, the elements of $\mathcal{T}(\alpha_i)$ and $\mathcal{T}(\beta_j)$ are denoted by $\tau_i^+$ and $\tau_j^-$.

\subsubsection{Conley-Zenhder index}
\label{Conley-Zenhder}

Now we define the relative Conley-Zenhder index which roughly counts how much a flow turn near a Reeb orbit with respect to a certain trivialization. Let $\gamma:\mathbb{R}/T\mathbb{Z}\rightarrow Y$ be a parametrized Reeb and  $\tau$ a trivialization of $\gamma$. If $\phi_t$ is the Reeb flow, the derivative
$$\dd\phi_t:T_{\gamma(0)} Y\rightarrow T_{\gamma(t)}Y$$
restricts to a linear symplectomorphism $\psi_t:\xi_{\gamma(0)}\rightarrow \xi_{\gamma(t)}$. Using the trivialization $\tau$, the later can be viewed as a $2\times 2$ symplectic matrix for each $t$. Since $\lambda$ is nondegenerate, this give rise to a path of symplectic matrices starting at the identity $I_{2\times 2}$ and ending at the linearized return map $\psi_T=P_{\gamma}$, which does not have $1$ as an eigenvalue. So the Conley-Zehnder index $CZ_\tau(\gamma)\in \mathbb{Z}$ is defined as the Conley-Zehnder index of the path $\{\psi_t\}_{t\in [0,T]}$. In dimension four this index can be explicity defined as follows. 
If $\gamma$ is hyperbolic, let $v\in \mathbb{R}^2$ be an eigenvector of $P_\gamma$, then the family of vectors $\{\Psi_t(v)\}_{t\in [0,T]}$  rotates by angle $\pi k$ for some integer $k$ (which is even in the positive hyperbolic case and odd in the negative hyperbolic case), and
\begin{equation}
    \label{CZhyperbolic}
    \text{CZ}_{\tau}(\gamma)=k    
\end{equation}

If $\gamma$ is elliptic, then we can change the trivialization so that each $\psi_t$ is rotation by angle $2\pi \theta_t\in \mathbb{R}$ where $\theta_t$ is a continuous function of $t\in [0,T]$ and $\theta_0=0$. The number $\theta=\theta_T\in \mathbb{R}/\mathbb{Z}$ is called the `rotation angle' of $\gamma$ with respect to $\tau$, and 
\begin{equation}
    \label{CZelliptic}
    \text{CZ}_\tau(\gamma)=2 \lfloor \theta\rfloor+1
\end{equation}
If one changes the trivialization $\tau$ by another $\tau'$, the Conley-Zehnder index changes in the following way:
$$\text{CZ}_\tau(\gamma^k)-\text{CZ}_\tau(\gamma^k)=2k(\tau-\tau')$$

\subsubsection{Relative first Chern class}

Let $Z\in H_2(Y,\alpha,\beta)$ and $\tau\in \mathcal{T}(\alpha,\beta)$. Given a surface $S$ with boundary and a smooth map $f:S\rightarrow Y$ representing $Z$, the relative first Chern class $c_\tau(Z)=c_1(\xi|_{f(s)},\tau)\in \mathbb{Z}$ is defined as the signed count of zeros of a generic section $\phi$ of $f^*\xi$ that is trivial with respect to $\tau$.

The function $c_\tau$ is linear relative to the homology class, that is If $Z\in H_2(Y,\alpha,\beta)$ and $Z'\in H_2(Y,\alpha',\beta')$ then
\begin{align}
    c_\tau(Z+Z')=c_\tau(Z)+c_\tau(Z')
\end{align}

Moreover, if we change the trivialization $\tau$ by a trivialization $\tau'$, then 
\begin{align}
    c_\tau(Z)-c_{\tau'}(Z')=\displaystyle\sum_im_i({\tau'}_i^+-\tau_i^+)-\displaystyle\sum_j n_j ({\tau'}_j^--\tau_j^-)
\end{align}

\subsubsection{Relative intersection number}
    Let $\pi_Y:\mathbb{R}\times Y\rightarrow Y$ denote the projection and take a smooth map $f:S\rightarrow [-1,1]\times Y$, where $S$ is compact oriented surface with boundary, such that $f|_{\partial S}$ consists of positively oriented covers of $\{1\}\times \alpha_i$ with multiplicity $m_i$ and negatively oriented covers of $\{-1\}\times \beta_j$ with multiplicity $n_j$, $\pi_Y\circ f$ represents $Z$, the restriction $f|_{\Dot{S}}$ to the interior of $S$ is an embedding, and $f$ is transverse to $\{-1,1\}\times Y$. Such an $f$ is called an \textit{admissible representative} for $Z\in H_2(Y,\alpha,\beta)$ and we abuse notation by denoting this representative as $S$. Futhermore, suppose that $\pi_Y|_S$ is an immersion near $\partial S$ and $S$ contains $m_i$ (resp. $n_j$) singly convered circles at $\{1\}\times \alpha_i$ (resp. $\{-1\}\times \beta_j$), given by projecting conormal vectors in $S$, are $\tau$-trivial. Moreover, in each fiber $\xi$ over $\alpha_i$ or $\beta_j$, these sections lie in distinct rays. Then $S$ is a $\tau$-\textit{representative}.

Let $\tau\in \mathcal{T}(\alpha\cup\alpha',\beta\cup\beta')$ be a trivialization and $S, S'$ be $\tau$-representatives of $Z\in H_2(Y,\alpha,\beta)$ and $Z'\in H_2(Y,\alpha',\beta')$ respectively, such that the projected conormal vectors at the boundary all lie in different rays. Then $Q_\tau(Z,Z')\in \mathbb{Z}$  is the signed count of (transverse) intersections of $S$ and $S'$ in $(-1,1)\times Y$. Also $Q_\tau$ is quadratic in the following sense
\begin{equation}
    Q_\tau(Z+Z')=Q_\tau(Z)+2Q_\tau(Z,Z')+Q_\tau(Z')
\end{equation}
If $Z=Z'$ we write $Q_\tau(Z):=Q_\tau(Z,Z)$. It can be proven (see for example \cite{hutchings2014lecture}) that 
\begin{equation}
    \label{relativeQtau}
    Q_\tau(Z)=c_1(N,\tau)-w_\tau(S)
\end{equation}
where $c_1(N,\tau)$, the \textit{relative Chern number of the normal bundle}, is a signed count of zeros of a generic section of $N|_S$ such that the restriction of this section to $\partial S$ agrees with $\tau$; note that the normal bundle $N$ can be canonically identified with $\xi$ along $\partial S$. Meanwhile, the term $w_\tau(S)$, the \textit{asymptotic writhe}, is defined by using the trivialization $\tau$ to identify a neighborhood of each Reeb orbit with $\Sp^1\times D^2\subset \mathbb{R^3}$, and then computing the writhe at $s>>0$ slice of $S$ near the boundary using this identification.

Finally, if $Z,Z'\in H_2(Y,\alpha,\beta)$, changing the trivialization yields
\begin{equation}
    Q_\tau(Z,Z')-Q_\tau(Z,Z')=\displaystyle\sum_i m_i^2 ({\tau'}_i^+-\tau_i^+) -\displaystyle\sum_j n_j^2({\tau'}_j^+-\tau_j^+)
\end{equation}

With the above definitions in place we can properly define the ECH index

\begin{defi}
    Let $\alpha=\{(\alpha_i,m_i)\},\beta=\{(\beta_j,n_j)\}$ be two orbit sets in the homology class $\Gamma$ and $Z\in H_2(Y,\alpha,\beta)$. The $\text{ECH}$ index is defined by 
    \begin{equation}
        \label{ECHindex}
        I(\alpha,\beta,Z)=c_\tau(Z)+Q_\tau(Z)+\text{CZ}_\tau^I(\alpha)-CZ_\tau^I(\beta)
    \end{equation}
    where $\text{CZ}^I_\tau(\alpha)=\sum_i\sum_{k=1}^{m_i}\text{CZ}_\tau(\alpha_i^{k_i})$ and similarly for $\text{CZ}^I_\tau(\beta)$.
\end{defi}

    \begin{pro}[\cite{hutchings2014lecture} section 3.4]
        \label{IndexECHproper}
        The $\text{ECH}$ index has the following properties:
        \begin{enumerate}[a)]
            \item (\textit{Well defined}) $I(\alpha,\beta,Z)$ does not depend on $\tau$, although each term of the formula does.
            \item (\textit{Additivity}) $I(\alpha,\beta,Z+W)=I(\alpha,\delta,Z)+I(\delta,\beta,W)$, whenever $\delta$ is another orbit set in $\Gamma$, $Z\in H_2(Y,\alpha,\beta)$ and $W\in H_2(Y,\delta,\beta)$.
            \item (\textit{Index parity}) If $\alpha$ and $\beta$ are chain complex generators, then 
            $$(-1)^{I(Z)}=\epsilon(\alpha)\epsilon(\beta)$$
            where $\epsilon(\alpha)$ denotes minus to the number of positive hyperbolic orbits in $\alpha$ and similarly $\epsilon(\beta)$.
            \item (\textit{Index ambiguity Formula}) $I(\alpha,\beta,Z)-I(\alpha,\beta,Z')=\langle c_1(\xi)+2 \text{PD}(\Gamma),Z-Z' \rangle$ where $c_1(\xi)$ is the first Chern class of the vector bundle $\xi$ and $\text{PD}$ denotes the Poincare dual.
        \end{enumerate}
    \end{pro}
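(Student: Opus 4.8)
\section*{Proof proposal}

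The plan is to establish the four properties separately, ordered by the amount of genuine input they require. Properties (b) and (d) are formal consequences of the (bi)linearity of the three constituents of \eqref{ECHindex} together with the fact that the Conley--Zehnder terms depend only on $\alpha,\beta$ and not on the relative class. For \textbf{additivity}, split $\mathrm{CZ}^I_\tau(\alpha)-\mathrm{CZ}^I_\tau(\beta)=(\mathrm{CZ}^I_\tau(\alpha)-\mathrm{CZ}^I_\tau(\delta))+(\mathrm{CZ}^I_\tau(\delta)-\mathrm{CZ}^I_\tau(\beta))$, use $c_\tau(Z+W)=c_\tau(Z)+c_\tau(W)$, and use that $Q_\tau$ is additive under concatenation of relative classes: choose $\tau$-representatives $S$ of $Z\in H_2(Y,\alpha,\delta)$ and $S'$ of $W\in H_2(Y,\delta,\beta)$ supported in $[-1,0]\times Y$ and $[0,1]\times Y$ respectively and glued along $\delta$, so that there are no interior intersections and $Q_\tau(Z+W)=Q_\tau(Z)+Q_\tau(W)$; adding the three identities gives $I(\alpha,\beta,Z+W)=I(\alpha,\delta,Z)+I(\delta,\beta,W)$. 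For the \textbf{index ambiguity formula}, put $A=Z-Z'\in H_2(Y)$. The $\mathrm{CZ}$ terms cancel; linearity of $c_\tau$ gives $c_\tau(Z)-c_\tau(Z')=c_\tau(A)=\langle c_1(\xi),A\rangle$; and expanding the quadratic form, $Q_\tau(Z)-Q_\tau(Z')=Q_\tau(A)+2Q_\tau(Z',A)$. Since $A$ is an absolute class it may be represented by a closed surface pushed into a slice, whose normal bundle in $[-1,1]\times Y$ has a nonvanishing section, so $Q_\tau(A)=0$; and $Q_\tau(Z',A)$ equals the homological intersection of that surface with a level set of $Z'$, a $1$-cycle in class $\Gamma=[\alpha]=[\beta]$, hence $Q_\tau(Z',A)=\langle \mathrm{PD}(A),\Gamma\rangle=\langle \mathrm{PD}(\Gamma),A\rangle$. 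Summing yields $I(\alpha,\beta,Z)-I(\alpha,\beta,Z')=\langle c_1(\xi)+2\,\mathrm{PD}(\Gamma),Z-Z'\rangle$.

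For \textbf{well-definedness} it suffices to check invariance of $I$ under raising the trivialization of a single orbit by one, since any two elements of $\mathcal{T}(\alpha,\beta)$ differ by a finite sequence of such moves. Suppose $\tau,\tau'$ agree except on a positive orbit $\alpha_i$, with $\tau'^+_i-\tau^+_i=1$. By the change-of-trivialization formulas stated above, $c_{\tau'}(Z)-c_\tau(Z)=-m_i$ and $Q_{\tau'}(Z)-Q_\tau(Z)=-m_i^2$, while $\mathrm{CZ}_\tau(\gamma^k)-\mathrm{CZ}_{\tau'}(\gamma^k)=2k(\tau-\tau')=-2k$ gives $\mathrm{CZ}^I_{\tau'}(\alpha)-\mathrm{CZ}^I_\tau(\alpha)=\sum_{k=1}^{m_i}2k=m_i^2+m_i$, with $\mathrm{CZ}^I_\tau(\beta)$ unchanged. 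Hence $I$ changes by $-m_i-m_i^2+(m_i^2+m_i)=0$. The computation for a change on a negative orbit is identical up to overall sign, so $I(\alpha,\beta,Z)$ is independent of $\tau$, while (as is visible from the formulas) no individual term is.

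The remaining property, \textbf{index parity}, is the one requiring real input, and I expect it to be the main obstacle. Reduce modulo $2$: an oriented $3$-manifold is parallelizable, so $w_2(\xi)=w_2(TY)=0$, $c_1(\xi)$ is even, and by the ambiguity formula just proved $I(\alpha,\beta,Z)\bmod 2$ is independent of $Z$. One then computes the three terms mod $2$. From \eqref{CZhyperbolic} and \eqref{CZelliptic}, $\mathrm{CZ}_\tau(\gamma^k)$ is even exactly when $\gamma$ is positive hyperbolic and odd otherwise, independently of $\tau$ and of $k$; since hyperbolic orbits occur with multiplicity one in a chain-complex generator, this determines $\mathrm{CZ}^I_\tau(\alpha)-\mathrm{CZ}^I_\tau(\beta)\bmod 2$ in terms of the elliptic multiplicities and the numbers of positive and negative hyperbolic orbits. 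It remains to show that $c_\tau(Z)+Q_\tau(Z)\bmod 2$ supplies exactly the complementary contribution; using \eqref{relativeQtau}, $Q_\tau(Z)=c_1(N,\tau)-w_\tau(S)$ for an embedded $\tau$-representative $S$, this reduces to the parity of the asymptotic writhe $w_\tau(S)$, which is controlled by the rotation numbers at the ends and hence by their elliptic/hyperbolic types. Assembling these congruences gives $(-1)^{I(\alpha,\beta,Z)}=\epsilon(\alpha)\epsilon(\beta)$; for the detailed writhe parity, which is where the genuine content of (c) resides, I would rely on \cite{hutchings2014lecture}.
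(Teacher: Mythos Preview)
The paper does not prove this proposition at all; it merely records the statement and cites \cite{hutchings2014lecture}, Section~3.4, for the proof. Your proposal therefore goes well beyond what the paper does.

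Your arguments for (a), (b) and (d) are correct and are essentially the standard ones found in the cited reference: the change-of-trivialization bookkeeping in (a), the concatenation/bilinearity argument in (b), and the closed-surface computation of $Q_\tau(A)$ and $Q_\tau(Z',A)$ in (d) are exactly how these are handled in \cite{hutchings2014lecture}. One small remark on (b): the statement in the paper has a typo (it writes $Z\in H_2(Y,\alpha,\beta)$ rather than $Z\in H_2(Y,\alpha,\delta)$), and you have silently read it the right way.

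For (c) your outline diverges somewhat from the argument in the reference. The route in \cite{hutchings2014lecture} (and in Hutchings' original index paper) does not pass through the asymptotic writhe of a $\tau$-representative; rather, one works with an embedded representative $S$ and uses that for an almost complex $4$-manifold $c_1(TX|_S)\equiv \chi(S)+S\cdot S \pmod 2$, which yields $c_\tau(Z)+Q_\tau(Z)$ mod $2$ directly in terms of the number of boundary circles of $S$, after which the $\mathrm{CZ}$ parities you computed finish the job. Your writhe-based sketch is not obviously wrong, but it is not the standard argument and the step ``the parity of $w_\tau(S)$ is controlled by the rotation numbers at the ends'' would need justification. Since you explicitly defer this step to \cite{hutchings2014lecture}, and since that is precisely what the paper itself does for the entire proposition, there is no gap relative to the paper.
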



\subsection{Fredholm index, Differential and Grading}
While the ECH index gives the gradding of the embedded contact homology, the Fredholm index gives the dimension of the moduli space of the J-holomorphic currents that we are intereted in counting. For a generic almost complex structure $J$ the Fredholm index of a $J$-holomorphic curve $C$ is defined as (see \cite[Sec~3.2]{hutchings2014lecture} for details): 
\begin{align}
    \label{FredholmIndex}
    \text{ind}(C)=-\chi(C)+2 c_\tau(C)+\displaystyle\sum_{i=1}^k \text{CZ}_\tau(\gamma_i^+)-\displaystyle\sum_{j=1}^l\text{CZ}_\tau(\gamma_j^-)
\end{align}
where $\chi(C)$ denotes the Euler characteristic of the $J$-holomorphic curve $C$ with $k$ positive ends at the Reeb orbits $\gamma_1^+\dots\gamma_k^+$ and the $l$ negative ends at Reeb orbits $\gamma_1^-\dots\gamma_k^-$.

The proposition below relates the ECH index with the Fredholm index when the ECH index is one or two. This relationship is one of the important step towards the definition of the differential map of the embedded contact and it also allow us to consider the U-map which we define in section \ref{Umap}. Here a trivial cylinder is $\mathbb{R}\times \gamma$, where $\gamma$ is a Reeb orbit.

\begin{pro}{\cite[Prop~3.1]{hutchings2014lecture}}
\label{BasedDiff}
Suppose $J$ is generic. Let $\alpha$ and $\beta$ be orbit sets and let $\mathcal{C}\in \mathcal{M}(\alpha, \beta)$ be any $J$-holomorphic current in $\mathbb{R}\times Y$, not necessarily somewhere injective. Then
\begin{enumerate}
    \item $I(\mathcal{C})\geq 0$, with equality if and only if $\mathcal{C}$ is a union of trivial cylinders with multiplicites. 
    \item If $I(\mathcal{C})=1$ then $\mathcal{C}=\mathcal{C}_0\sqcup C_1$, where $I(\mathcal{C}_0)=0$, and has $\text{ind}(C_1)=I(C_1)=1$.
    \item If $I(\mathcal{C})=2$, and $\alpha$ and $\beta$ are chain complex generators, then $\mathcal{C}=\mathcal{C}_0\sqcup C_2$, where $I(\mathcal{C}_0)=0$, and has $\text{ind}(C_1)=I(C_1)=2$.
\end{enumerate}
\end{pro}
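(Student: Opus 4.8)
The statement to be proved is Proposition~\ref{BasedDiff}, which is a direct quotation of \cite[Prop.~3.1]{hutchings2014lecture}, so the honest plan is to explain how the three parts follow from the partition conditions and the index inequality that govern $J$-holomorphic curves in symplectizations. The central tool is the \emph{adjunction inequality} (or writhe bound) relating the ECH index $I(C)$ and the Fredholm index $\mathrm{ind}(C)$ of a somewhere injective curve: for any somewhere injective $J$-holomorphic curve $C$ one has
$$\mathrm{ind}(C)\le I(C)-2\delta(C),$$
where $\delta(C)\ge 0$ counts singularities with multiplicity, together with the fact that the asymptotic writhe estimate forces the multiplicities of the ends of $C$ to realize the specific ``partition'' combinatorics determined by the rotation numbers of the elliptic and (positive/negative) hyperbolic Reeb orbits. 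First I would recall, from Section~\ref{Conley-Zenhder} and the definition \eqref{ECHindex} of $I$ and \eqref{FredholmIndex} of $\mathrm{ind}$, exactly how these two indices differ, so that the inequality above becomes a statement purely about $c_\tau$, $Q_\tau$, $w_\tau$ and the Conley--Zehnder terms; this is where \eqref{relativeQtau} enters, since $Q_\tau - (\text{writhe terms})$ is precisely the quantity controlling the gap.

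For part (1): a general current $\mathcal C=\sum d_i C_i$ decomposes into somewhere injective irreducible components $C_i$ with multiplicities $d_i\ge 1$. I would apply the inequality componentwise, use additivity of $I$ (Proposition~\ref{IndexECHproper}(b)) together with the superadditivity of the ECH index under taking unions with multiplicity — this superadditivity is itself a consequence of the quadratic behaviour of $Q_\tau$ and the convexity of the partition functions — to conclude $I(\mathcal C)\ge\sum d_i\,\mathrm{ind}(C_i)\ge 0$. Equality then forces each $C_i$ to have $\mathrm{ind}(C_i)=0$, no singularities, and ends whose partitions are simultaneously the ``incoming'' and ``outgoing'' extremal partitions; the only curves meeting all of these constraints are the trivial cylinders $\mathbb R\times\gamma$. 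Parts (2) and (3) are then a bookkeeping refinement: write $\mathcal C=\mathcal C_0\sqcup\mathcal C'$ where $\mathcal C_0$ collects the trivial-cylinder components, so $I(\mathcal C_0)=0$ by part (1); additivity gives $I(\mathcal C')=I(\mathcal C)\in\{1,2\}$; and applying the inequality to $\mathcal C'$ (now using in case (3) that $\alpha,\beta$ are genuine chain-complex generators, so hyperbolic orbits occur with multiplicity one, which is exactly what rules out a nontrivial multiple cover contributing) shows $\mathcal C'$ is a single somewhere injective curve $C$ with $0\le I(C)-2\delta(C)$ and $\mathrm{ind}(C)\le I(C)$, whence $\delta(C)=0$, $C$ is embedded, and $\mathrm{ind}(C)=I(C)$ equals $1$ or $2$ respectively.

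The main obstacle is the equality analysis in part (1): proving that $I(\mathcal C)=0$ \emph{only} for unions of trivial cylinders requires the full strength of the relation \eqref{relativeQtau} between $Q_\tau$ and the writhe, plus Hutchings's combinatorial lemmas on the partition conditions (the ``extremal'' partitions at an elliptic orbit with irrational rotation number, and the single-multiplicity constraint at hyperbolic orbits). I would not reprove those lemmas; instead I would cite them from \cite{hutchings2014lecture} and \cite{hutchings2006rounding} and indicate precisely which inputs are used. A secondary subtlety, relevant to (2) and (3), is that the splitting $\mathcal C=\mathcal C_0\sqcup C$ is genuinely disjoint as currents — one must check that $C$ cannot share a Reeb orbit asymptotic with a trivial cylinder component in a way that changes multiplicities — and this again follows from the partition conditions together with $I(\mathcal C_0)=0$ forcing the trivial cylinders to ``use up'' exactly the extremal share of each repeated orbit. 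With those citations in hand, the remaining steps are the routine index arithmetic sketched above.
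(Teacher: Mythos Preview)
The paper does not give its own proof of this proposition: it is stated as a direct citation of \cite[Prop.~3.1]{hutchings2014lecture}, and the only additional remark is the inequality \eqref{intcyl} from \cite[Prop.~7.1]{hutchings2002index}, recorded for later use rather than as a proof. Your sketch is a faithful outline of the argument in the cited source---the index inequality $\mathrm{ind}(C)\le I(C)-2\delta(C)$ for somewhere injective curves, additivity and superadditivity of $I$ under unions, and the partition constraints forcing the equality case to be trivial cylinders---so there is nothing to compare and no gap to flag.
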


To prove the above Proposition the following property is used which is a particular case of \cite[Prop.~7.1]{hutchings2002index}: If $\mathcal{C}$ is a $J$-holomorphic current with no trivial cylinders and $T$ is the union of (possibly repeated) trivial cylinders, then
\begin{equation}
    \label{intcyl}
    I(C\cup T)\geq I(C)+2\#(C\cap T)
\end{equation}
From intersection positivity it also follows that $\#(C\cap T)\geq 0$, with equality if and only if $C$ and $T$ are disjoint. We will make use of this inequality as well. 

Given two chain complex generators $\alpha$ and $\beta$, the chain complex differential $\partial$ coefficient $\langle\partial \alpha, \beta\rangle\in \mathbb{Z}_2$ is a $\text{mod } 2$ count of $\text{ECH}$ index $1$ of $J$-holomorphic curves in the symplectization of $Y$ that \textit{converge as currents} to $\sum_i m_i\alpha_i$ as $s\rightarrow \infty$ and to $\sum_j n_j \beta_j$ as $s\rightarrow -\infty$ see e.g. \cite{hutchings2014lecture}.

It follows from Proposition \ref{BasedDiff} that $I$ gives rise to a relative $\mathbb{Z}_d$-grading on the chain complex $\text{ECC}_*(Y,\lambda,\Gamma,J)$, where $d$ is the divisibility of $c_1(\xi)+2\text{PD}(\Gamma)\in H^2(Y;\mathbb{Z})$ mod torsion. In order to define an (non-canonical) absolute $\mathbb{Z}_d$-grading, it is enought to fix some generatos $\beta$ with homology $\Gamma$ and set
$$I(\alpha,\beta):=[I(\alpha,\beta,Z)],$$
for an arbitraty $Z\in H_2(Y,\alpha, \beta)$. By additivity property 2. in Propostion \ref{BasedDiff} the differential decreases this absolute grading by $1$. Moreover, when $c_1(\xi)+2\text{PD}(\Gamma)$ is torsion in $H^2(Y;\mathbb{Z})$, we obtain a $\mathbb{Z}$ gradding on $\text{ECC}_*(Y,\lambda,\Gamma, J)$ as in the case of lens spaces.

\subsection{Additional structures for embedded contact homology.} 

In this subsection we define some additional important structures in the ECH setting that will be needed in the rest of the exposition. 

\subsubsection{$U$-map}
When $Y$ is connected, there is a well-defined ``U-map"
\begin{align}
    \label{Umap}
    U:\text{ECH}_*(Y,\xi,\gamma)\rightarrow\text{ECH}_{*-2}(Y,\xi,\Gamma)
\end{align}

This is induced by a chain map
\begin{align}
    U_{J,z}:(\text{ECC}_*(Y,\lambda,\Gamma),\partial_J)\rightarrow(\text{ECH}_{*-2}(Y,\xi,\Gamma),\partial_J)
\end{align}    
which counts $J$-holomorphic currents with $\text{ECH}$ index $2$ passing through a generic point $z\in \mathbb{R}\times Y$. The assumption that $Y$ is connected implies that the induced map on homology does not depend on the choice of base point, see \cite[Sec 2.5]{WeinsteinConjecture}  for details. Taubes showed in \cite{TaubesECHSW} Theorem 1.1 that the $U$ map induced in homology  agrees with a corresponding map on Seiberg-Witten Floer homology. We thus obtain the well-defined $U$-map (\ref{Umap}).

\begin{defi}
        A $U$-\textit{sequence} for $\Gamma$ is a sequence $\{\sigma_k\}_{k\geq 1}$where each $\sigma_k$ is a nonzero homogenous class in $\text{ECH}_*(Y,\xi,\Gamma)$, and $U\sigma_{k+1}=\sigma_k$ for each $k\geq 1$.
\end{defi}
We will need the following nontriviality result for the $U$-map, which is proved by combining Taubes' isomorphism with a result from Kromheimer-Mrowka \cite{kronheimer2007monopoles}:

\begin{pro}{\cite[Prop. 2.3]{cristofaro2019torsion}}
    If $c_1(\xi)+2\text{PD}(\Gamma)\in H^2(Y,\mathbb{Z})$ is torsion, then a $U$-sequence for $\Gamma$ exists. 
\end{pro}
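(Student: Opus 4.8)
This is \cite[Prop.~2.3]{cristofaro2019torsion}; we recall the argument, which is imported from Seiberg--Witten theory. The plan is to transport the question to monopole Floer homology via Taubes' isomorphism, use the Kronheimer--Mrowka structure theory for torsion spin-$c$ structures to see that the $U$-action is an isomorphism between nonzero groups in all sufficiently large gradings, and then build the desired tower by repeatedly taking $U$-preimages. For the first step, by Taubes' isomorphism $\text{ECH}_*(Y,\xi,\Gamma)\cong\widehat{HM}^{-*}(Y,\mathfrak{s}_\xi+\text{PD}(\Gamma))$ of \cite{TaubesECHSW}, together with its refinement (Taubes' later papers and work of Hutchings--Taubes) that this isomorphism intertwines the ECH $U$-map with the canonical $U$-action on monopole Floer (co)homology, it is equivalent to produce a sequence of nonzero homogeneous classes in $\widehat{HM}^\bullet(Y,\mathfrak{s})$, with $\mathfrak{s}=\mathfrak{s}_\xi+\text{PD}(\Gamma)$, that are carried to one another by $U$. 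Since $c_1(\xi)+2\text{PD}(\Gamma)=c_1(\mathfrak{s})$ is torsion, both sides carry an absolute $\mathbb{Z}$-grading (as recalled above in the case of lens spaces) and $U$ shifts it by $-2$, which is what makes ``homogeneous'' and the degree bookkeeping below meaningful.

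For the second step, recall the Kronheimer--Mrowka exact triangle relating the three flavours of monopole Floer homology $\widehat{HM}$, $\overline{HM}$, and the remaining ``to'' flavour. For a torsion spin-$c$ structure the ``bar'' version $\overline{HM}_\bullet(Y,\mathfrak{s})$ is computed from the reducible Seiberg--Witten solutions; as a graded module it is noncanonically isomorphic to (a completion of) $\mathbb{Z}[U,U^{-1}]\otimes_{\mathbb{Z}}\Lambda^*H^1(Y;\mathbb{Z})$, so it is nonzero in infinitely many even degrees and $U$ acts on it as an isomorphism of degree $-2$. On the other hand the reduced monopole Floer homology is a finitely generated abelian group, hence supported in a bounded range of degrees; therefore, in all sufficiently extreme degrees --- precisely, in the range that corresponds under Taubes' isomorphism to large ECH degree --- the natural comparison map in the exact triangle between $\widehat{HM}$ and $\overline{HM}$ is an isomorphism. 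Combining the two facts, there is an integer $N$ such that $\text{ECH}_d(Y,\xi,\Gamma)\neq 0$ and $U\colon\text{ECH}_{d+2}(Y,\xi,\Gamma)\to\text{ECH}_d(Y,\xi,\Gamma)$ is an isomorphism for every even $d\ge N$.

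For the third step, fix an even $d_0\ge N$ and a nonzero homogeneous class $\sigma_1\in\text{ECH}_{d_0}(Y,\xi,\Gamma)$. For each $k\ge 1$, since $U\colon\text{ECH}_{d_0+2k}(Y,\xi,\Gamma)\to\text{ECH}_{d_0+2(k-1)}(Y,\xi,\Gamma)$ is an isomorphism, define $\sigma_{k+1}\in\text{ECH}_{d_0+2k}(Y,\xi,\Gamma)$ to be the unique preimage of $\sigma_k$ under $U$; it is nonzero because $\sigma_k$ is. Then $\{\sigma_k\}_{k\ge 1}$ is a $U$-sequence for $\Gamma$, which proves the proposition (the work is already done on the ECH side, but one may equivalently build the tower in $\widehat{HM}^\bullet$ and transport it back).

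No part of the tower construction is hard; the content is entirely borrowed. The two inputs that carry the argument are: (i) the compatibility of Taubes' equivalence with the $U$-maps and the absolute $\mathbb{Z}$-gradings, which requires the full ``ECH $=$ SWF'' package rather than merely the group isomorphism quoted earlier in this paper and is the place where matching grading conventions must be checked carefully; and (ii) the Kronheimer--Mrowka computation that $\overline{HM}$ of a torsion spin-$c$ structure is $2$-periodic with invertible $U$-action while the reduced theory is finitely generated, which is precisely what forces $\widehat{HM}$, hence $\text{ECH}$, to stabilize to a nonzero, $U$-periodic group in high degree. I expect (i) --- the bookkeeping identifying the correct ``extreme'' side and the matching of gradings --- to be the main obstacle to writing this cleanly, though it is entirely standard.
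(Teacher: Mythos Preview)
The paper does not give its own proof of this proposition; it merely cites \cite[Prop.~2.3]{cristofaro2019torsion} and remarks that it ``is proved by combining Taubes' isomorphism with a result from Kronheimer--Mrowka \cite{kronheimer2007monopoles}''. Your sketch is exactly an elaboration of that sentence: transport to $\widehat{HM}$ via Taubes (compatibly with $U$ and the $\mathbb{Z}$-grading), use the Kronheimer--Mrowka structure theory to see that for torsion $\mathfrak{s}$ the $U$-action is an isomorphism between nonzero groups in all sufficiently large degrees, and then build the tower. So your proposal is correct and follows the same route the paper indicates.
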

\subsubsection{The $\text{ECH}$ partition conditions}

The $\text{ECH}$ partition conditions are a topological type data associated to the pseudoholomorphic curves (and currents) which can be obtained indirectly from certain $\text{ECH}$ index relations. In particular, the covering multiplicities of the Reeb orbits at the ends of the non-trivial components of the pseudoholomorphic curves (and currents) are uniquely determined by the trivial cylinder component information. The genus can be determined by the current's relative homology class.
\begin{defi}{\cite{hutchings2014lecture}} Let $\gamma$ be an embedded Reeb orbit and $m$ a positive integer. We define two partitions of $m$, the \textit{positive partition} $P_\gamma^+(m)$ and the \textit{negative partition} $P_\gamma^-(m)$ as follows
\begin{itemize}
    \item If $\gamma$ is positive hyperbolic, then
    $$P_\gamma^+(m):=P_{\gamma}^-(m):=(1,\dots,1)$$
    \item If $\gamma$ is negative hyperbolic, then
    \begin{align*}
        P_\gamma^+(m):=P_{\gamma}^-(m):=\begin{cases}
            (2,\dots,2) & m \text{ even}\\
            (2,\dots,2,1) & m \text{ odd}
        \end{cases}
    \end{align*}
    \item If $\gamma$ is elliptic then the partitions are defined in terms of the quantity $\theta\in \mathbb{R}/\mathbb{Z}$ for which $\text{CZ}_\tau(\gamma^k)=2\lfloor k\theta \rfloor+1$. We write
    $$P_\gamma^\pm(m):=P_\theta^\pm(m)$$
    with the right hand side defined as follows. 
\end{itemize}
    Let $\Gamma_\theta^+(m)$ denote the highest concave polygonal path in the plane that starts at $(0,0)$, ends at $(m,\lfloor k\theta \rfloor)$, stays below the line $y=\theta x$ and has corners at lattice points. Then the integers $P_\theta^+(m)$ are the horizontal displacements of the segments of the path $\Gamma_\theta^+(m)$ between the lattice points. 
    
    Likewise, let $\Gamma_\theta^+(m)$ denote the lowest convex polygonal path in the plane that starts at $(0,0)$, ends at $(m,\lfloor k\theta \rfloor)$, stays above the line $y=\theta x$ and has corners at lattice points. Then the integers $P_\theta^-(m)$ are the horizontal displacements of the segments of the path $\Gamma_\theta^-(m)$ between the lattice points. 
    
    Both $P_\theta^\pm(m)$ depend only on the class of $\theta$ in $\mathbb{R}/\mathbb{Z}$. Moreover, $P_\theta^+(m)=P_{-\theta}^{-}(m)$.
\end{defi}

\subsubsection{Filtered ECH.}
There is a filtration on \text{ECH} which enables us to compute the embedded contact homology via succesive approximations (see theorem 2.17 \cite{nelson2022embedded} ). The \textit{symplectic action} or \textit{lenght} of an Reeb current $\alpha=\{(\alpha_i,m_i)\}$ is 
\begin{align*}
    \mathcal{A}(\alpha):=\displaystyle\sum_i m_i\int_{\alpha_i} \lambda
\end{align*}
If $J$ is a $\lambda$-compatible and there is a $J$-holomorphic current from $\alpha$ to $\beta$, then $\mathcal{A}(\alpha)\geq \mathcal{A}(\beta)$ by Stokes' theorem, since $\dd \lambda$ is an area form on such $J$-holomorphic curves. Since $\partial$ counts $J$-holomorphic currents, it decreases symplectic action, that is, 
\begin{align}
\label{Stokes}
    \langle\partial\alpha,\beta\rangle\not=0 \text{ implies } \mathcal{A}(\alpha)\geq \mathcal{A}(\beta)
\end{align}
Let $\text{ECC}_*^L(Y,\lambda,\gamma,J)$ denote the subgroup of $\text(ECC)_*(Y,\lambda,\Gamma,J)$ generated by Reeb currents of symplectic action less than $L$. Because $\partial$ decreases action, it is a subcomplex. It is shown (See \cite[theo~1.3]{hutchings2013proof}) that the homology of $\text{ECC}_*(Y,\lambda,\Gamma,J)$ is independent of $J$, therefore we denote its homology by 
$\text{ECC}^L_*(Y,\lambda,\Gamma,J)$, which we call filtered $\text{ECH}$.
Given $L<L'$, there is a homomorphism 
\begin{align*}
\iota^{L,L'}:\text{ECH}_*^L(Y,\lambda,\Gamma)\rightarrow\text{ECH}_*^{L'}(Y,\lambda,\Gamma)   
\end{align*}
induced by the inclusion $\text{ECC}_*^L(Y,\lambda,\Gamma)\rightarrow\text{ECC}_*^{L'}$ and independent of $J$. The $\iota^{L,L'}$ fit together into a direct system $(\{\text{ECC}_*^L(Y,\lambda,\Gamma)\}_{L\in \mathbb{R}},\iota^{L,L'})$. Because taking direct limits commutes with taking homology, we have
\begin{align}
\label{directlimit}
\text{ECH}_*(Y,\lambda,\Gamma)=H_*\left(\lim_{L\rightarrow \infty} \text{ECC}_*^L(Y,\lambda,\Gamma,J)\right)=\lim_{L\rightarrow\infty}\text{ECH}^L_*(Y,\lambda,\Gamma)   
\end{align}

\subsubsection{ECH spectrum}
\label{ECHspectrum}
\text{ECH} contains a canonical class defined as follows. Observe that for any nondegenerate contact three-manifold $(Y,\lambda)$, the empty set of Reeb orbits is a generator of the chain complex $\text{ECC}(Y,\lambda,0,J)$. It follows from (\ref{Stokes}) that this chain complex generator is actually a cycle, i.e,
$$\partial\emptyset=0$$

\text{ECH} cobordism maps can be used to show that the homology class of this cycle does not depend on $J$ or $\lambda$, and thus represents a well-defined class 
$$[\emptyset]\in\text{ECH}_*(Y,\xi,0)$$

\begin{defi}
    Let $(Y,\lambda)$ be a closed contact closed $3$-manifold such that $[\emptyset]\not=0\in\text{ECH}(Y,\xi,0)$. We define the \textit{ECH spectrum} as
\begin{align*}
        c_k(Y,\lambda)=\inf\{L:\eta\in \text{ECH}^L_{2k}(Y,\lambda,0), U^k \eta=[\emptyset]\}
    \end{align*}
\end{defi}

In the most important cases for our purpouses we can use a simpler version of the $\text{ECH}$ spectrum. We say that a closed sum of generators $\alpha_1+\cdots+\alpha_r$ is \textit{minimal} if after removing any number of summands the sum is no longer closed. 

\begin{lemm}
\label{capsimp}
Let $(Y,\lambda)$ be a contact closed 3-manifold. Suppose that 
\begin{align*}
\text{ECH}_*(Y,\lambda,0) = \left\{ 
    \begin{array}{lcc}
                \mathbb{Z}_2     &   \text{if}  & *=2k \\
                       0         &   \text{if}  & *=2k+1
             \end{array}
   \right.
\end{align*}
and that the $U$ map of $\text{ECH}(Y,\lambda)$ is an isomorphism for every even index. Then
\begin{equation}
    \label{specsimp}
    \begin{split}
    c_k(Y,\lambda)=\min\{\max\{\mathcal{A}(\alpha_1),\cdots,\mathcal{A}(\alpha_r)\}: 
    I(\alpha_1)=\cdots=I(\alpha_r)=2k,  \\
    \alpha_1+\cdots+\alpha_r \text{ is minimal and non-nullhomologous}\}
    \end{split}
\end{equation}
\end{lemm}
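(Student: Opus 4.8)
The plan is to derive the simplified formula \eqref{specsimp} from the definition of the ECH spectrum by carefully unwinding what the hypotheses on $\text{ECH}_*(Y,\lambda,0)$ and the $U$-map buy us. First I would observe that under the assumption $\text{ECH}_{2k}(Y,\lambda,0)\cong\mathbb{Z}_2$ for all $k\geq 0$ (and $0$ in odd degrees), the class $[\emptyset]$ generates $\text{ECH}_0$, and since $U$ is an isomorphism in every even degree, for each $k$ there is a unique nonzero class $\sigma_k\in\text{ECH}_{2k}(Y,\lambda,0)$ with $U^k\sigma_k=[\emptyset]$; moreover $\sigma_k$ is simply \emph{the} nonzero element of $\text{ECH}_{2k}$. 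Hence the condition ``$\eta\in\text{ECH}^L_{2k}$ with $U^k\eta=[\emptyset]$'' in the definition of $c_k(Y,\lambda)$ is equivalent to ``the image of $\eta$ under $\iota^{L,\infty}:\text{ECH}^L_{2k}\to\text{ECH}_{2k}$ is the nonzero class $\sigma_k$.'' So $c_k(Y,\lambda)=\inf\{L:\sigma_k\in\mathrm{im}(\iota^{L,\infty})\}$, i.e. the smallest action level at which $\sigma_k$ becomes visible in filtered ECH.

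Next I would translate this ``smallest $L$ at which $\sigma_k$ is in the image of filtered ECH'' into a statement purely about cycles in the chain complex. A class in $\text{ECH}^L_{2k}$ mapping to $\sigma_k\neq 0$ is represented by a $\partial$-closed chain $z=\alpha_1+\cdots+\alpha_r$, all summands of ECH index $2k$ and action $\leq L$, which is not a boundary in the full complex; conversely, given such a $z$, its action is $\max_i\mathcal{A}(\alpha_i)$ (by \eqref{Stokes}/positivity of $d\lambda$ on holomorphic curves, the action of a sum of orbit sets is just the largest summand's action — here I'd be careful: action is additive over the multiset but for the ``length'' of a chain one takes the max over summands, so I should state precisely that a chain of action $\leq L$ means each summand has action $\leq L$), and it witnesses $c_k\leq\max_i\mathcal{A}(\alpha_i)$. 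Taking the infimum over all such $z$ gives $c_k(Y,\lambda)=\inf_z\max_i\mathcal{A}(\alpha_i)$ over non-nullhomologous closed chains $z$ with all summands of index $2k$. Then I would argue we may restrict to \emph{minimal} such chains: if $z$ is closed, non-nullhomologous, and not minimal, some proper sub-sum $z'$ is still closed; since $\text{ECH}_{2k}\cong\mathbb{Z}_2$ has only the classes $0$ and $\sigma_k$, either $z'$ or $z-z'$ is non-nullhomologous (their classes sum to $[z]=\sigma_k\neq 0$), and whichever it is has $\max$ of actions no larger, so we can pass to it; iterating terminates at a minimal non-nullhomologous closed chain. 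This replaces $\inf$ by $\min$ (finiteness: there are only finitely many orbit sets below any action bound by nondegeneracy) and yields exactly \eqref{specsimp}.

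The main obstacle I expect is the bookkeeping around the direct-limit/filtration formalism: making rigorous the equivalence ``$U^k\eta=[\emptyset]$ in $\text{ECH}^L$ for some $\eta$'' $\Longleftrightarrow$ ``$\sigma_k\in\mathrm{im}(\iota^{L,\infty})$'' requires knowing that $U$ commutes with the maps $\iota^{L,L'}$ and that $[\emptyset]$ at finite filtration level maps to $[\emptyset]$ at infinity, plus using that $U$ is an iso on the (unfiltered) homology to invert it — the subtlety being that $U$ need not be surjective at finite filtration level, which is precisely why one only gets an infimum and why the right object is the image of $\iota^{L,\infty}$. A secondary, more elementary point needing care is the ``minimal'' reduction over $\mathbb{Z}_2$: one must check that minimality can always be achieved without increasing $\max_i\mathcal{A}(\alpha_i)$, which follows from the two-element structure of $\text{ECH}_{2k}$ as sketched. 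I would also remark that the hypotheses of the lemma are exactly what hold for the concave lens spaces $L(n,m)$ (by the computation of $\text{ECH}(L(n,m))$ and its $U$-map referenced earlier), so that the lemma applies in the cases of interest.
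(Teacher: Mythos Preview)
Your proposal is correct and follows essentially the same route as the paper: first use the hypotheses that $U$ is an isomorphism and $\text{ECH}_{2k}\cong\mathbb{Z}_2$ to reduce the spectrum definition to $c_k(Y,\lambda)=\inf\{L:\text{the nonzero class of }\text{ECH}_{2k}\text{ lies in the image of }\text{ECH}^L_{2k}\}$, then rewrite this as a min-max over closed non-nullhomologous chains of index $2k$. Your argument is in fact more complete than the paper's: the paper's proof does not explicitly justify the restriction to \emph{minimal} sums, whereas your splitting argument (if $z=z'+z''$ with $z'$ closed, then since $[z']+[z'']=\sigma_k\neq 0$ in $\mathbb{Z}_2$ one of them is non-nullhomologous, with no larger max-action) cleanly handles this point, and your remark on finiteness correctly upgrades the infimum to a minimum.
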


\begin{proof} Since $U$ is an isomorphism and $ECH_{2k}=\mathbb{Z}_2$, the expression for the capacities simplifies as 
\begin{align*}
        c_k(Y,\lambda)=\inf\{L:\eta\in \text{ECH}_{2k}^L(Y,\lambda), \eta\not=0\}
    \end{align*}
Take $L>0$, suppose that $\eta\in \text{ECH}_{2k}^L(Y,\lambda)$ and $\eta\not=0$.

Suppose that $\eta=[\alpha_1+\cdots +\alpha_r]$ then by definition $\max\{\mathcal{A}(\alpha_1),\dots,\mathcal{A}(\alpha_r)\}<L$. It follows that 
$$\min\{\max\{\mathcal{A}(\alpha_1),\dots,\mathcal{A}(\alpha_r)\}:\eta=[\alpha_1+\cdots +\alpha_r]\}\leq c_k(Y,\lambda)$$

Take $L$ equal to the left side of the above inequality then for every $\epsilon>0$ there exist a sum of generators $\alpha_1+\cdots+\alpha_r$ such that $[\alpha_1+\cdots+\alpha_r]=\eta\in \text{ECH}^{L+\epsilon}(Y,
\lambda)$ then $c_k(Y,\lambda)\leq L$. The result follows. 
\end{proof}

By Corollary \ref{ECHlenscol} and Equation (\ref{specsimp}) holds for any lens space. Finally, we can give the definition of the $\text{ECH}$ capacities. 

\begin{defi}
    A (four-dimensional) \textit{Liouville domain} is a weakly exact symplectic filling $(X,\omega)$ of a contact three-manifold $(Y,\lambda)$.
\end{defi}

\begin{defi} If $(X,\omega)$ is a four-dimensional Liouville domain with boundary $(Y,\lambda)$, define the $\text{ECH}$ $\textit{capacities}$ of $(X,\omega)$ by
$$c_k(X,\omega)=c_k(Y,\lambda)\in [0,\infty]$$
\end{defi}

A justification for this definition can be found in \cite{hutchings2014lecture} in Section 1.5. The definition of capacities can be extended to non-Liouville domains through a limiting argument. 



\section{Embedded Contact Complex of Concave Lens Spaces}
\label{ECHlensspace}

In this section we compute the embedded contact complex of a concave contact form $\lambda$ over a lens space $L(n,m)$ with toric symmetry. We do so by given a combinatorial model to the embedded contact complex after perturbing the contact form $\lambda_a$ twice in such a way that under a bounded action every orbit is non-degenerate and we can use the direct limit property \eqref{directlimit}. As it is usual in this context (see \cite[Sec~4.2]{hutchings2014lecture}) Reeb orbits appear in $\Sp^1$-families and they can be transformed under a small perturbation into a couple of orbits. In our situation we do an addional perturbation that we are calling a \textit{concave perturbation} which is going to be useful in simplifying the combinatorial complex. 

\subsection{Reeb Dynamics for toric contact closed $3$-manifolds.}
Let $a=(a_1,a_2):[0,1]\rightarrow \mathbb{R}^2$ be a function such that it induce a concave contact form $\lambda_a$ over the lens space $L(n,m)$ as explained in section \ref{NotableSubs}. Notice that it also induce a region $\Omega$ in $V_{n,m}$. Similar to \cite{Choi_2014} Section 3.3 the closed orbits of the Reeb field associated to $\lambda_a$ are given by the following:

\begin{itemize}
    \item The circle $e_+$ obtained by the projection $\pi(\{0\}\times \To^2)$ over $L(n,m)$ with action $\mathcal{A}(e_+)=a(0)\times (n,m)$.
    \item The circle $e_-$ obtained by the projection $\pi(\{1\}\times \To^2)$ over $L(n,m)$ with action $\mathcal{A}(e_-)=a_2(1)$.
    \item For each $x\in (0,1)$ for which $(a_1'(x),a'_2(x))$ is proportional to $(-p,q)$ where $p$ and $q$ are relative primes to each other, there is a Morse-Bott $\mathbb{S}^1$-family of Reeb orbits foliating $\{x\}\times \To^2$, with relative homology over $\To^2$ equal $(-p,q)$. Each orbit of this folliation has action $a(x)\times (p,q)$.
\end{itemize}

Notice that the actions of the Reeb orbits just decribed coincide with the $\Omega$-length given in the Definition \ref{omegalength}.


\begin{rem}
\label{notationorbits}
Depending on the convinience we can use different notations to denote the Reeb Orbits. The notations $e_{p,q}, h_{p,q}$ means the elliptic or hyperbolic Reeb orbit with homology $(p,q)$ respectively. We can also write $e_x, h_x$ to mean the elliptic or hyperbolic Reeb orbit ocurring at $x$ where $a'(x)$ is proportional to a primitive vector. We write $e^m_x$ to mean the ellitic orbit at $x\in [0,1]$ with multiplicity $m$ and $h^m_x$ to mean the orbit set $\{(h_x,1),(e_x,m-1)\}$. So a Reeb orbit set $\alpha=\{(\alpha_i,m_i)\}$ can be written with multiplicative notation in a unique way as $\alpha=f_{x_1}^{m_1}\cdots f_{x_k}^{m_k}$ with $x_1< \cdots< x_k$ where each $f_i$ is a label `$e$' or `$h$'.
\end{rem}

\subsection{Two Steps Perturbation.}
\label{perturbation}

Suppose that $Y=L(n,m)$ is a lens space with a concave contact form $\lambda_a$. To obtain a simple version of a combinatorial complex it is convinient to do two perturbations over $\lambda_a$. Proposition \ref{equicomgeo} explains the exact relationship obtained under the perturbations and the embedded contact complex. 

Before explaining the perturbations it is convenient to notice that to choice a homology class of the special orbits $e_+$ and $e_-$ in $H_1(\To^2)$ is equivalent to choose a trivialization of the contact structure $\xi|_{e_+}$ and $\xi|_{e_-}$ respectively.

\textbf{Trivialization over the elliptic orbits $e_+$ and $e_-$:}

Choose a vector $(v_1,v_2)\in \mathbb{Z}^2$ such that $(p,q)\times (v_1,v_2)=1$ then $(v_1,v_2)$ induce a trivialization over the $e_+$. Similarly the vector $(-1,0)$ induces a trivialization over the $e_-$ orbit.  
\begin{lemm}
\label{rotnumbers}
The orbits $e_+$ and $e_-$ are elliptic orbits. Futhermore, with the trivialization induced $(v_1,v_2)$ and $(-1,0)$ explained above the rotation numbers $\phi_+$ and $\phi_-$ of $e_+$ and $e_-$ respectively are given by the equations
\begin{align}
    \phi_+=\displaystyle\frac{a'(0)\times (v_1,v_2)}{a'(0)\times (p,q)} && \phi_-=\displaystyle\frac{a_2'(1)}{a_1'(1)}
\end{align}
\end{lemm}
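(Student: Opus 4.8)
The plan is to compute the linearized Reeb flow along $e_+$ and $e_-$ explicitly in the model $\overline{V}_{n,m}\times\To^2$ with symplectic form \eqref{symplecticform}, exploiting the toric symmetry. First I would recall from the proof of Lemma \ref{exilens} that on the slice $Y_a=\pi^{-1}(a([0,1]))$ the contact form is $\lambda_a = a_1\dd\theta_1 + a_2\dd\theta_2$ (after the identification $t_i\mapsto a_i$), whose Reeb vector field, away from the collapsed circles, is a linear combination of $\partial_{\theta_1},\partial_{\theta_2}$ determined by the conditions $\iota_{R}\dd\lambda_a=0$ and $\lambda_a(R)=1$. A short computation shows that on the family over $x\in(0,1)$ the Reeb direction is the one annihilating $(a_1'(x),a_2'(x))$; at the endpoints $x=0$ and $x=1$ the relevant direction collapses, so $e_+$ and $e_-$ are the isolated short orbits lying over the two vertices, and in particular they are closed orbits of the Reeb flow. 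This establishes that $e_\pm$ are orbits to begin with.

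Next I would check that $e_+$ and $e_-$ are elliptic. Because the flow is linear in the torus directions, the linearized return map $P_{e_\pm}$ acting on $\xi|_{e_\pm}$ is conjugate to a rotation: the contact structure $\xi$ at a point of $e_+$ is spanned by $\partial_{t_1},\partial_{t_2}$ modulo the Reeb and collapsing directions, and since the orbit sits over a smooth point of $\partial^+\Omega$ where the curve $a$ has $a\times a'>0$, the return map has no real eigenvector — equivalently its eigenvalues are unit complex numbers. This is the ``elliptic'' conclusion. I expect this to be essentially a matter of writing the $2\times 2$ matrix and observing it is (conjugate to) a genuine rotation rather than a shear or a hyperbolic map.

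The heart of the matter is computing the rotation numbers $\phi_+,\phi_-$ with respect to the specified trivializations. For $e_+$, the trivialization is induced by a vector $(v_1,v_2)$ with $(p,q)\times(v_1,v_2)=1$ — here $(p,q)=(n,m)$ is the primitive vector along the axis containing $a(0)$; for $e_-$ it is $(-1,0)$, adapted to the $\{t(0,1)\}$ axis. The rotation number is the ratio of the ``speed'' of the linearized flow in the trivialization direction to its speed in the Reeb direction. Concretely, parametrizing nearby orbits over $a(x)$ for $x$ near $0$, their homology class in $\To^2$ varies with $a'(x)$, and the derivative of the rotation angle picks out exactly $a'(0)\times(v_1,v_2)$ in the numerator and the ``return time'' factor $a'(0)\times(p,q)$ in the denominator; the analogous bookkeeping at $x=1$ with $(v_1,v_2)$ replaced by $(-1,0)$ and $(p,q)$ replaced by $(0,1)$ gives $\phi_- = a_2'(1)/a_1'(1)$. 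This is the same computation as in \cite[Sec.~3.3]{Choi_2014} adapted to the cone $V_{n,m}$, and it is where I expect the main bookkeeping obstacle: one must be careful that the Morse–Bott family degenerates correctly at the endpoints and that the sign conventions in the cross products and in the orientation of $I\times\To^2$ from Definition \ref{lensspace} are consistent, so that the rotation numbers come out with the stated signs rather than their negatives.

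Finally I would remark that these formulas are the analogue for $M(n,m)$ of the classical computation for concave toric domains in $\mathbb{C}^2$, recovering the latter when $(n,m)=(1,0)$, and that they feed directly into the Conley–Zehnder index formula \eqref{CZelliptic} used in the combinatorial model of Proposition \ref{equicomgeo}.
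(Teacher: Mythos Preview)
Your outline is correct and would prove the lemma, but it proceeds differently from the paper. You propose a direct computation of the linearized Reeb flow in the toric coordinates of $\overline{V}_{n,m}\times\To^2$, tracking how the nearby Morse--Bott tori degenerate at the endpoints and reading off the rotation number as a ratio of components of $a'(0)$ (resp.\ $a'(1)$) in the chosen frame. The paper instead reduces everything to the known $\Sp^3$ case: for $e_-$ it simply observes that a neighborhood is strictly contactomorphic to a neighborhood of the short orbit in a standard ellipsoid boundary, and for $e_+$ it writes down the explicit matrix
\[
A_{(v_1,v_2)}=\begin{pmatrix} v_2 & -v_1 \\ -q & p \end{pmatrix}\in\text{SL}_2(\mathbb{Z}),
\]
notes that it sends $(p,q)\mapsto(1,0)$ and $(v_1,v_2)\mapsto(0,1)$, hence carries the chosen trivialization to the standard one on $\Sp^3$, and then just applies $A$ to $a'(0)$ to read off the rotation number. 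The advantage of the paper's route is that it sidesteps exactly the sign and orientation bookkeeping you flag as the main obstacle: once the $\text{SL}_2(\mathbb{Z})$ change of basis is written down, the formula for $\phi_+$ is immediate from the standard $\Sp^3$ computation and there is nothing left to check. Your direct approach is more self-contained (it does not appeal to an external $\Sp^3$ result) but requires carrying through the degeneration and orientation conventions by hand.
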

\begin{proof}
    For the orbit $e_2$ the result follows by noticing that a neighborhood of this orbits is strictly contactomorphic to a neighborhood of the sphere $\Sp^3$ with the appropiate contact structure. 
    
    To prove the corresponding claim to the orbit $e_+$  we can use the matrix 
    
    \begin{align}
    \label{trivmatrix}
    A_{(v_1,v_2)}=\begin{pmatrix}v_2 & -v_1 \\-q   & p  \end{pmatrix}    
    \end{align}
    
    in $\text{SL}_2(\mathbb{Z})$. The matrix $A$ induces a contactomophism between a neighborhood of the orbit $e_1$ to a neighborhood to the corresponding orbit in $\Sp^3$ with a contact structure induced by $A$ and the curve $a$. Futhermore, the matrix sends $(p,q)$ to $(1,0)$ and $(v_1,v_2)$ to $(0,1)$ meaning that the matrix $A$ sends the trivialization defined in $Y_a$ to the standard trivialization in $\Sp^3$. Since 
 $$A(a'_1(0),a'_2(0))=(a'(0)\times (v_1,v_2), a'(0)\times (p,q))$$
 the lemma follows. 
\end{proof}

\subsubsection{Perturbation over the concavity.}
\label{concavepert}
    Choose $(v_1,v_2)$ as in Lemma \ref{rotnumbers} to trivilize the contact structure over $e_+$. From the function $a:[0,1]\rightarrow \mathbb{R}^2$ we can have a family of smooth functions $\{a^\epsilon\}_{\epsilon>0}$ such that 
 \begin{itemize}
    \item $a^\epsilon(x)\times (a^\epsilon)'(x)<0$ for every $x\in [0,1]$
     \item $a(x)=a^\epsilon(x)$ for every $x\in [\epsilon,1-\epsilon]$.
     \item There exist positive constants $k_1$ and $k_2$ such that $a'_\epsilon(0)\rightarrow -k_1 (n,m)$ and $a'_\epsilon(1)\rightarrow k_2 (0,1)$ when $\epsilon\rightarrow 0$.
 \end{itemize}
    Write $\lambda_{a}^{\epsilon}:=\lambda_{a_\epsilon}$. Notice that the  family of contact forms $\{\lambda_a^\epsilon\}_{\epsilon>0}$ possessess the following property: for each $k_0>0$ there exists an $\epsilon>0$ such that the orbtis $e_+$ and $e_-$ in the contact form  is such that $I(e_+)>k_0$ and $I(e_-)>k_0$. We call this family of perturbations over $\{\lambda_a^\epsilon\}_{\epsilon>0}$ a \textit{concave perturbation} of $\lambda_a$.

\subsubsection{Morse-Bott Perturbation.}
\label{Morse-Bott}
As it is usual in this context (see \cite{hutchings2014lecture} Section 4.2) for each $L>0$ the contact form $\lambda_a$ can be perturbed in such a way that each $\Sp^1$-family of Reeb orbits described above with action strictly less than $L$ becomes two embedded Reeb orbits of approximately the same action. More precisely, suppose that $\{x\}\times \To^2$ is foliated by a $\Sp^1$-family of Reeb orbits with homology $(p,q)$ given by the contact form of $\lambda_a$ and action less than $L$. After the perturbation the $\Sp^1$-family becomes two embedded Reeb orbits of approximately the same action, one of them is elliptic and we denote it by $e_{(p,q)}$, the other one is hyperbolic and  we denoted it by $h_{(p,q)}$. This perturbation can be chosen in such a way that the linearization is conjugated to a small negative rotation, it follows from equations \eqref{CZhyperbolic} and \eqref{CZelliptic} that $\text{CZ}_{\tau}(e_{(p,q)})=-1$ and $\text{CZ}_{\tau}(h_{(p,q)})=0$.


We will need to perfom a concave perturbation and after that we have to perform a Morse-Bott perturbation. We will denote that perturbation as $\lambda_a^{\epsilon,L}$.



\subsection{Generators}
\label{generators}

Suppose that $\alpha=\{(\alpha_i,m_i)\}$ is a set of generators which does not contain the orbits $e_+$ or $e_-$ with any multiplicity. For each $\alpha_i$ in the orbit set $\alpha$ write $[\alpha_i]=(p_i,q_i)\in H_2(\To^2)$. Notice that we can organize the orbit set $\alpha$ as $\{(\alpha_1,m_1),\dots,(\alpha_k,m_k)\}$ where $q_1/p_1<\cdots <q_k/p_k$. 

Suppose that $[\alpha]=0\in H_2(\To^2)$. Notice that this homological condition implies that there exists a unique path $(n,m)$-concave path $P_\alpha$ (see Definition \ref{(n,m)-concave path}) such that the edges are consecutive concatenations of the vectors $m_1(p_1,q_1)\dots m_k(p_k,q_k)$ and $P_\alpha$ begins at the $(n,m)$-axis and ends at the $y$-axis. Futhermore, we make 
the $(n,m)$-concave path $P_\alpha$ into a \textit{decorated $(n,m)$-concave path $P_\alpha$} by adding the letter `$h$' or the letter `$e$' to the edge $m_i(p_i,q_i)$ depending of the pair $(\alpha_i, m_i)$ has an hyperbolic orbit or not. 
\begin{figure}
    \centering
    \input{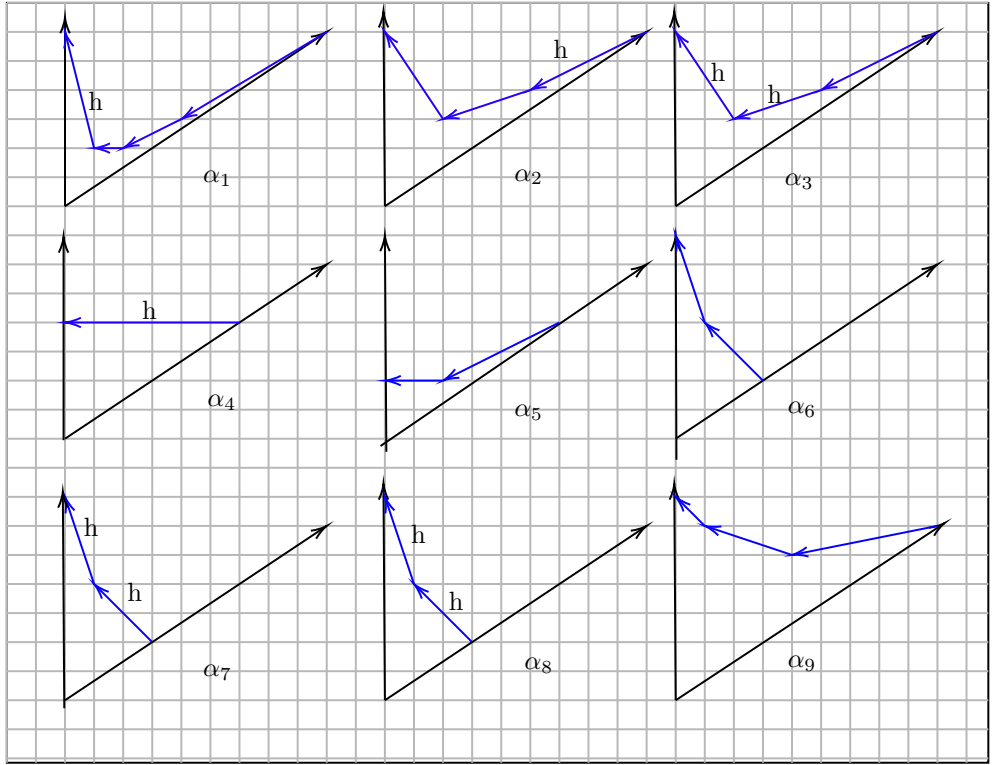}
    \caption{Examples of decorated $(n,m)$-polygonal paths.}\label{Decorated (n,m)-polygons}
\end{figure}

In Figure \ref{Decorated (n,m)-polygons} we show some examples of decorated $(n,m)$-polygons. From left to right and top to bottom these decorated $(n,m)$-polygons correspond to the orbit sets $\alpha_1=e_{(-5,-3)}\,e_{(-2,1)}\,e_{(-1,0)}\,h_{(-1,4)}$, $\alpha_2=e^2_{(-2,-1)}\,e_{(-3,-1)}\,e_{(-2,3)}$, $\alpha_3=e^2_{(-2,-1)}\,h_{(-3,-1)}\,h_{(-2,3)}$, $\alpha_4=h^6_{(-1,0)}$, $\alpha_5=h^2_{(-2,-1)}e^2_{(-1,0)}$, $\alpha_6=e^2_{(-1,1)}e_{(-1,3)}$, $\alpha_7=h^2_{(-1,1)}e_{(-1,3)}$, $\alpha_8=h^2_{(-1,1)}h_{(-1,3)}$, $e_{(-5,-1)}e_{(-3,1)}e_{(-1,1)}$. Here we are using the multiplicative notation, see Remark \ref{notationorbits}.

\subsection{Combinatorial Index.}

Consider a Reeb orbit set $\alpha$ that does not contain the orbits $e_+$ and $e_-$. Let $P_\alpha$ be its $(n,m)$-concave path. Remember from the Definition \ref{Counting points} that we denote by $\mathcal{L}_{n,m}(P_\alpha)$ the number of lattice points contained in the region defined by $P_\alpha$, the $y$-axis and the $(n,m)$-axis without counting the lattice points in the path $P_\alpha$.

We define the index of the path $P_\alpha$ as 
\begin{equation} 
\label{echinccon}
I(\alpha)=2\mathcal{L}_{n,m}(P_\alpha)+h   
\end{equation}
Where $h$ in the above equation denotes the number of `$h$' labels in $P_\alpha$. See figure \ref{Generators L(3,2)} for examples of $(3,2)$-concave paths.

\begin{figure}
    \centering
    \tikzset{every picture/.style={line width=0.75pt}} 

\begin{tikzpicture}[x=0.55pt,y=0.55pt,yscale=-1,xscale=1]

\draw  [draw opacity=0] (2,-16) -- (624,-16) -- (624,166) -- (2,166) -- cycle ; \draw  [color={rgb, 255:red, 160; green, 160; blue, 160 }  ,draw opacity=1 ] (2,-16) -- (2,166)(22,-16) -- (22,166)(42,-16) -- (42,166)(62,-16) -- (62,166)(82,-16) -- (82,166)(102,-16) -- (102,166)(122,-16) -- (122,166)(142,-16) -- (142,166)(162,-16) -- (162,166)(182,-16) -- (182,166)(202,-16) -- (202,166)(222,-16) -- (222,166)(242,-16) -- (242,166)(262,-16) -- (262,166)(282,-16) -- (282,166)(302,-16) -- (302,166)(322,-16) -- (322,166)(342,-16) -- (342,166)(362,-16) -- (362,166)(382,-16) -- (382,166)(402,-16) -- (402,166)(422,-16) -- (422,166)(442,-16) -- (442,166)(462,-16) -- (462,166)(482,-16) -- (482,166)(502,-16) -- (502,166)(522,-16) -- (522,166)(542,-16) -- (542,166)(562,-16) -- (562,166)(582,-16) -- (582,166)(602,-16) -- (602,166)(622,-16) -- (622,166) ; \draw  [color={rgb, 255:red, 160; green, 160; blue, 160 }  ,draw opacity=1 ] (2,-16) -- (624,-16)(2,4) -- (624,4)(2,24) -- (624,24)(2,44) -- (624,44)(2,64) -- (624,64)(2,84) -- (624,84)(2,104) -- (624,104)(2,124) -- (624,124)(2,144) -- (624,144)(2,164) -- (624,164) ; \draw  [color={rgb, 255:red, 160; green, 160; blue, 160 }  ,draw opacity=1 ]  ;
\draw [color={rgb, 255:red, 0; green, 0; blue, 0 }  ,draw opacity=1 ]   (22,124) -- (200.34,5.11) ;
\draw [shift={(202,4)}, rotate = 146.31] [color={rgb, 255:red, 0; green, 0; blue, 0 }  ,draw opacity=1 ][line width=0.75]    (10.93,-3.29) .. controls (6.95,-1.4) and (3.31,-0.3) .. (0,0) .. controls (3.31,0.3) and (6.95,1.4) .. (10.93,3.29)   ;
\draw [color={rgb, 255:red, 0; green, 0; blue, 0 }  ,draw opacity=1 ]   (22,124) -- (22,6) ;
\draw [shift={(22,4)}, rotate = 90] [color={rgb, 255:red, 0; green, 0; blue, 0 }  ,draw opacity=1 ][line width=0.75]    (10.93,-3.29) .. controls (6.95,-1.4) and (3.31,-0.3) .. (0,0) .. controls (3.31,0.3) and (6.95,1.4) .. (10.93,3.29)   ;
\draw [color={rgb, 255:red, 31; green, 0; blue, 255 }  ,draw opacity=1 ]   (82,84) -- (44,84) ;
\draw [shift={(42,84)}, rotate = 360] [color={rgb, 255:red, 31; green, 0; blue, 255 }  ,draw opacity=1 ][line width=0.75]    (10.93,-3.29) .. controls (6.95,-1.4) and (3.31,-0.3) .. (0,0) .. controls (3.31,0.3) and (6.95,1.4) .. (10.93,3.29)   ;
\draw [color={rgb, 255:red, 31; green, 0; blue, 255 }  ,draw opacity=1 ]   (42,84) -- (22.63,25.9) ;
\draw [shift={(22,24)}, rotate = 71.57] [color={rgb, 255:red, 31; green, 0; blue, 255 }  ,draw opacity=1 ][line width=0.75]    (10.93,-3.29) .. controls (6.95,-1.4) and (3.31,-0.3) .. (0,0) .. controls (3.31,0.3) and (6.95,1.4) .. (10.93,3.29)   ;
\draw [color={rgb, 255:red, 0; green, 0; blue, 0 }  ,draw opacity=1 ]   (242,124) -- (242,6) ;
\draw [shift={(242,4)}, rotate = 90] [color={rgb, 255:red, 0; green, 0; blue, 0 }  ,draw opacity=1 ][line width=0.75]    (10.93,-3.29) .. controls (6.95,-1.4) and (3.31,-0.3) .. (0,0) .. controls (3.31,0.3) and (6.95,1.4) .. (10.93,3.29)   ;
\draw [color={rgb, 255:red, 0; green, 0; blue, 0 }  ,draw opacity=1 ]   (242,124) -- (420.34,5.11) ;
\draw [shift={(422,4)}, rotate = 146.31] [color={rgb, 255:red, 0; green, 0; blue, 0 }  ,draw opacity=1 ][line width=0.75]    (10.93,-3.29) .. controls (6.95,-1.4) and (3.31,-0.3) .. (0,0) .. controls (3.31,0.3) and (6.95,1.4) .. (10.93,3.29)   ;
\draw [color={rgb, 255:red, 22; green, 0; blue, 255 }  ,draw opacity=1 ]   (302,84) -- (243.66,45.11) ;
\draw [shift={(242,44)}, rotate = 33.69] [color={rgb, 255:red, 22; green, 0; blue, 255 }  ,draw opacity=1 ][line width=0.75]    (10.93,-3.29) .. controls (6.95,-1.4) and (3.31,-0.3) .. (0,0) .. controls (3.31,0.3) and (6.95,1.4) .. (10.93,3.29)   ;
\draw [color={rgb, 255:red, 0; green, 0; blue, 0 }  ,draw opacity=1 ]   (442,124) -- (442,6) ;
\draw [shift={(442,4)}, rotate = 90] [color={rgb, 255:red, 0; green, 0; blue, 0 }  ,draw opacity=1 ][line width=0.75]    (10.93,-3.29) .. controls (6.95,-1.4) and (3.31,-0.3) .. (0,0) .. controls (3.31,0.3) and (6.95,1.4) .. (10.93,3.29)   ;
\draw [color={rgb, 255:red, 0; green, 0; blue, 0 }  ,draw opacity=1 ]   (442,124) -- (620.34,5.11) ;
\draw [shift={(622,4)}, rotate = 146.31] [color={rgb, 255:red, 0; green, 0; blue, 0 }  ,draw opacity=1 ][line width=0.75]    (10.93,-3.29) .. controls (6.95,-1.4) and (3.31,-0.3) .. (0,0) .. controls (3.31,0.3) and (6.95,1.4) .. (10.93,3.29)   ;
\draw [color={rgb, 255:red, 65; green, 0; blue, 253 }  ,draw opacity=1 ]   (502,84) -- (443.66,45.11) ;
\draw [shift={(442,44)}, rotate = 33.69] [color={rgb, 255:red, 65; green, 0; blue, 253 }  ,draw opacity=1 ][line width=0.75]    (10.93,-3.29) .. controls (6.95,-1.4) and (3.31,-0.3) .. (0,0) .. controls (3.31,0.3) and (6.95,1.4) .. (10.93,3.29)   ;

\draw (17,134.4) node [anchor=north west][inner sep=0.75pt]    {$I( \Lambda ) =\ 10$};
\draw (231,137.4) node [anchor=north west][inner sep=0.75pt]    {$I( \Lambda ) =16$};
\draw (470,45) node [anchor=north west][inner sep=0.75pt]   [align=left] {h};
\draw (431,139.4) node [anchor=north west][inner sep=0.75pt]    {$I( \Lambda ) =17$};

\end{tikzpicture}
    \caption{Some examples of generators for $L(3,2)$}
    \label{Generators L(3,2)}
\end{figure}
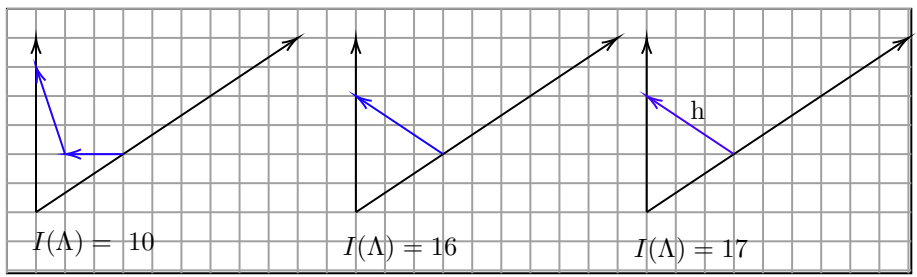

\subsection{Corounding the corner and the differential.}
\label{corounding}

In this section we explain the operation of corounding the corner. Versions of this operation are common in certain settings, see for example \cite{cristofaro2020proof, hutchings2005periodic, hutchings2006rounding}. 

With this operation over the polygonal paths we can define the combinatorial differential that correspond to the $\text{ECH}$ differential. Suppose that $\alpha$ and $\beta$ are admissible orbits set and let $P_\alpha$ and $P_\beta$ be the corresponding polygonal paths. Notice that $P_\alpha$ and the axes define a non-compact convex region $R_\alpha$, see Figure \ref{NonCompactCompactRegion}. Similarly, we have a region $R_\beta$ associated to $P_\beta$. We say that $P_\alpha$ is obtained from $P_\beta$ by \textbf{rounding a corner}, if $R_\alpha$ is a region obtained from the region $R_\beta$ after removing a corner of $P_\beta$. 

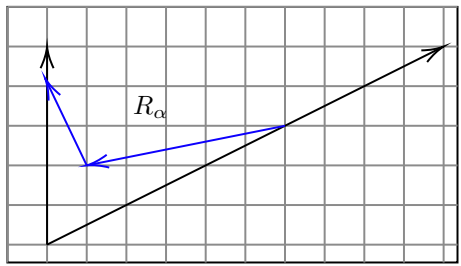
\begin{figure}
    \centering
    \tikzset{every picture/.style={line width=0.75pt}} 

\begin{tikzpicture}[x=0.75pt,y=0.75pt,yscale=-1,xscale=1]

\draw  [draw opacity=0] (242,5) -- (469,5) -- (469,134) -- (242,134) -- cycle ; \draw  [color={rgb, 255:red, 139; green, 139; blue, 139 }  ,draw opacity=1 ] (242,5) -- (242,134)(262,5) -- (262,134)(282,5) -- (282,134)(302,5) -- (302,134)(322,5) -- (322,134)(342,5) -- (342,134)(362,5) -- (362,134)(382,5) -- (382,134)(402,5) -- (402,134)(422,5) -- (422,134)(442,5) -- (442,134)(462,5) -- (462,134) ; \draw  [color={rgb, 255:red, 139; green, 139; blue, 139 }  ,draw opacity=1 ] (242,5) -- (469,5)(242,25) -- (469,25)(242,45) -- (469,45)(242,65) -- (469,65)(242,85) -- (469,85)(242,105) -- (469,105)(242,125) -- (469,125) ; \draw  [color={rgb, 255:red, 139; green, 139; blue, 139 }  ,draw opacity=1 ]  ;
\draw    (262,125) -- (262,89) -- (262,27) ;
\draw [shift={(262,25)}, rotate = 90] [color={rgb, 255:red, 0; green, 0; blue, 0 }  ][line width=0.75]    (10.93,-3.29) .. controls (6.95,-1.4) and (3.31,-0.3) .. (0,0) .. controls (3.31,0.3) and (6.95,1.4) .. (10.93,3.29)   ;
\draw    (262,125) -- (460.21,25.89) ;
\draw [shift={(462,25)}, rotate = 153.43] [color={rgb, 255:red, 0; green, 0; blue, 0 }  ][line width=0.75]    (10.93,-3.29) .. controls (6.95,-1.4) and (3.31,-0.3) .. (0,0) .. controls (3.31,0.3) and (6.95,1.4) .. (10.93,3.29)   ;
\draw [color={rgb, 255:red, 13; green, 0; blue, 253 }  ,draw opacity=1 ]   (282,85) -- (261.86,42.8) ;
\draw [shift={(261,41)}, rotate = 64.49] [color={rgb, 255:red, 13; green, 0; blue, 253 }  ,draw opacity=1 ][line width=0.75]    (10.93,-3.29) .. controls (6.95,-1.4) and (3.31,-0.3) .. (0,0) .. controls (3.31,0.3) and (6.95,1.4) .. (10.93,3.29)   ;
\draw [color={rgb, 255:red, 13; green, 0; blue, 253 }  ,draw opacity=1 ]   (382,65) -- (283.96,84.61) ;
\draw [shift={(282,85)}, rotate = 348.69] [color={rgb, 255:red, 13; green, 0; blue, 253 }  ,draw opacity=1 ][line width=0.75]    (10.93,-3.29) .. controls (6.95,-1.4) and (3.31,-0.3) .. (0,0) .. controls (3.31,0.3) and (6.95,1.4) .. (10.93,3.29)   ;

\draw (304,48.4) node [anchor=north west][inner sep=0.75pt]    {$R_{\alpha }$};

\end{tikzpicture}
    \caption{A convex region $R_\alpha$ in $L(2,1)$.}
    \label{NonCompactCompactRegion}
\end{figure}

We also say that $P_\alpha$ is obtained from $P_\beta$ by \textbf{rounding a corner and locally losing one} $h$, if $P_\alpha$ is obtained from $P_\beta$ by a corner rounding such that the following conditions are satisfied:

\begin{enumerate}[(i)]
    \item Let $k$ denote the number of edges in $P_\beta$, with an endpoint at the rounded corner, which are labeled $h$. We requiere that $k>0$, so $k=1$ or $k=2$.
    \item Of the new in $P_\alpha$, created by the corner rounding operation, exactly $k-1$ are labelled $h$.
\end{enumerate}

Since we are interested in having an operation over $P_\alpha$ we will say that $P_\beta$ is obtained by $P_\beta$ by \textbf{corounding the corner} and \textbf{locally gaining one }$h$ if $P_\alpha$ is obtained from $P_\beta$ by \textbf{rounding a corner and locally losing one} $h$. If the context is clear we will just say that $P_\beta$ is obtained from $P_\beta$ by corounding the corner. 

\begin{figure}
    \centering
    \input{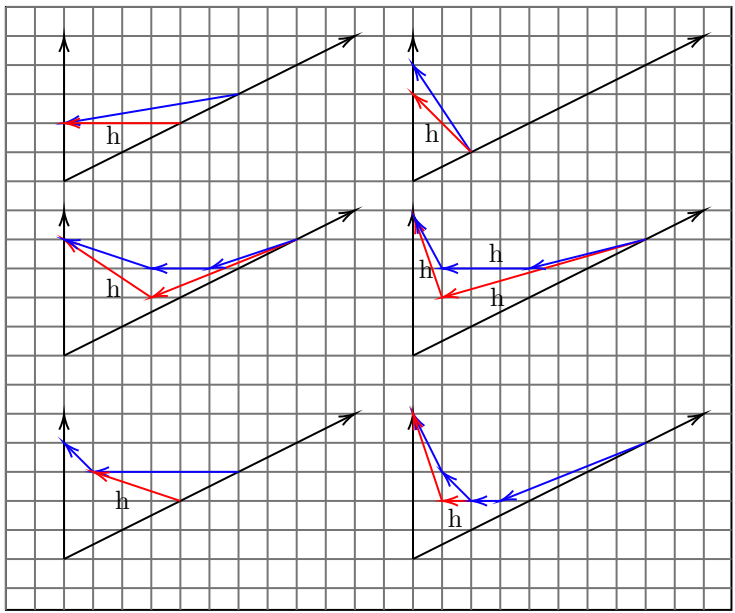}
    \caption{Examples of corounding the corner in $L(2,1)$. The paths $P_\alpha$ and $P_\beta$ are represented by blue and red respectively.}
    \label{Coroundingexamples}
\end{figure}

\begin{pro}
\label{equicomgeo}
For each $\epsilon>0$ and $k_0$ integer, there exists $L_\epsilon$ a good perturbation $\lambda_{\epsilon}$ of the contact form on $X_\Omega$ such that
\begin{enumerate}
    \item For every $k\leq k_0$ we have that $\text{ECC}_{k_0}^{L_\epsilon} (Y,\lambda_\epsilon)$ does not contain the orbits $e_1$ and $e_2$. More precisely, no orbit set $\alpha \in \text{ECC}_{k_0}^{L_\epsilon} (Y,\lambda_\epsilon) $ contains $e_1$ or $e_2$ with any multiplicity.
    \item Every orbit with period less than $L$ is non-degenerate.
    \item The map $\alpha\mapsto P_\alpha$ is a bijection between the generators of $\text{ECC}_{k_0}^{L_\epsilon} (Y,\lambda_\epsilon)$ and $\{P_\alpha:I(P_\alpha)=k\}$. Futhermore $I(\alpha)=I(P_\alpha)$ and $|\mathcal{A}(\alpha)-l_\Omega(P_\alpha)|<1/L$. 
    \item $\langle\partial\alpha,\beta\rangle=1$ if and only if $P_\beta$ is obtained from $P_\alpha$ by corounding the corner. 
\end{enumerate}
\end{pro}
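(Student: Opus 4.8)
The plan is to run the two perturbations of Sections~\ref{concavepert} and~\ref{Morse-Bott} in the correct order and then read the four parts off the combinatorics. Fix $k_0$ and first perform a concave perturbation $\lambda_a^\epsilon$. By Lemma~\ref{rotnumbers} the rotation numbers of $e_+,e_-$ are $\phi_+=\frac{a'(0)\times(v_1,v_2)}{a'(0)\times(p,q)}$ and $\phi_-=a_2'(1)/a_1'(1)$, so as $a'_\epsilon(0)\to -k_1(n,m)$ and $a'_\epsilon(1)\to k_2(0,1)$ we have $\phi_\pm\to\infty$, and by \eqref{CZelliptic} the Conley--Zehnder indices, hence the ECH indices $I(e_\pm)$, exceed $k_0$ once $\epsilon$ is small; this is the mechanism behind part (1), since additivity and nonnegativity of the ECH index (Proposition~\ref{IndexECHproper}(b) and the inequality \eqref{intcyl}, together with Proposition~\ref{BasedDiff}) keep an orbit set carrying $e_+$ or $e_-$ out of any degree $\le k_0$. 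Next perform a Morse--Bott perturbation at a scale $L=L_\epsilon$ large enough that each $\Sp^1$-family of action $<L_\epsilon$ splits into a nondegenerate elliptic orbit $e_{(p,q)}$ (with $\text{CZ}_\tau=-1$) and a nondegenerate positive hyperbolic orbit $h_{(p,q)}$ (with $\text{CZ}_\tau=0$) as in Section~\ref{Morse-Bott}, and choose a generic $J$; together with the nondegeneracy of $e_\pm$ already achieved this is part (2).

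For part (3), a generator of $\text{ECC}^{L_\epsilon}_{k_0}$ is an admissible orbit set $\alpha$ in the class $\Gamma=0$ of action $<L_\epsilon$ not meeting $e_\pm$. Write it in multiplicative notation (Remark~\ref{notationorbits}) as $\alpha=f_{x_1}^{m_1}\cdots f_{x_k}^{m_k}$ with strictly increasing slopes and $[\alpha_i]=(p_i,q_i)$, and concatenate the vectors $m_i(p_i,q_i)$ in this order: concavity of the contact form makes the arc concave, and the homological condition $[\alpha]=0$ in $H_1(L(n,m))$ — that its total displacement lies in the sublattice spanned by the two collapsed circle directions — forces its endpoints onto lattice points of the $(n,m)$-ray and of the $y$-axis, so the arc is an $(n,m)$-concave path, and labelling the edge $m_i(p_i,q_i)$ with $h$ exactly when $\alpha_i$ is hyperbolic (admissibility forcing $m_i=1$ there) and with $e$ otherwise produces the decorated path $P_\alpha$; this assignment is visibly a bijection onto decorated $(n,m)$-concave paths. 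The grading is matched by computing $I(\alpha)=I(\alpha,\emptyset,Z)=c_\tau(Z)+Q_\tau(Z)+\text{CZ}_\tau^I(\alpha)$ for a convenient relative class $Z$ and the edgewise toric trivialization: the Chern and self-intersection terms assemble, by a Pick-type lattice-point count exactly as in \cite{hutchings2011quantitative,Choi_2014}, into $2\mathcal{L}_{n,m}(P_\alpha)$ (Definition~\ref{Counting points}), while $\text{CZ}_\tau^I$ contributes $+1$ per $e$-edge lattice segment and $0$ per $h$-edge, so $I(\alpha)=2\mathcal{L}_{n,m}(P_\alpha)+h$, consistent with the parity $(-1)^I=(-1)^h$ of Proposition~\ref{IndexECHproper}(c). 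For the action, the unperturbed orbit at slice $x_i$ has action $a(x_i)\times(p_i,q_i)$, which reproduces the summand $v\times p_v$ of $l_\Omega(P_\alpha)$ (Definition~\ref{omegalength}) on taking $p_v=a(x_i)$, legitimate because $a'(x_i)\parallel v$ and $\partial^+\Omega$ is concave; and the Morse--Bott perturbation may be chosen $C^0$-small enough, uniformly over orbit sets of action $<L_\epsilon$, that $|\mathcal{A}(\alpha)-l_\Omega(P_\alpha)|<1/L$.

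Part (4) is the heart of the matter. For the ``only if'' direction, $\langle\partial\alpha,\beta\rangle\ne 0$ gives an ECH-index-$1$ holomorphic current $\mathcal{C}$ from $\alpha$ to $\beta$, which by Proposition~\ref{BasedDiff} is a union of trivial cylinders with one embedded curve $C_1$ satisfying $\text{ind}(C_1)=I(C_1)=1$, and $\mathcal{A}$ strictly decreases along it. The relative homology class of $\mathcal{C}$, the writhe and linking estimates underlying the ECH partition conditions, and the constraint $I(\alpha)-I(\beta)=1$ then force the convex region $R_\alpha$ to be obtained from $R_\beta$ by deleting a single corner, with the $h$-labels changing as in conditions (i)--(ii) of Section~\ref{corounding}; equivalently, $P_\beta$ is obtained from $P_\alpha$ by corounding that corner. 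This is essentially the argument of \cite{hutchings2006rounding} for $\To^3$, and it transfers because a neighbourhood of any interior slice of $L(n,m)$ is strictly contactomorphic to an open subset of $\To^2\times\R$ carrying a contact form of the type studied there.

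For the ``if'' direction one must exhibit the currents and show the $\mathbb{Z}_2$-count is $1$. The plan is to stretch the neck around the corner being rounded so as to reduce the count to the local model of curves effecting one corner rounding in $\To^2\times\R$ — the computation of \cite{hutchings2006rounding} (see also \cite{keon}) — and to glue back the remaining trivial cylinders by obstruction-bundle gluing along the Morse--Bott families. I expect the main obstacle to sit here, at two points: first, ruling out currents that are \emph{not} of this single-corner shape, in particular multiply-covered components and components with ends on $e_\pm$, which should follow from intersection positivity, the inequality \eqref{intcyl}, and the index bound of part (1); and second, confirming the mod-$2$ count equals $1$ for every $(n,m)$, the key point being that the relevant moduli space is supported near a single interior corner of the lens-space polygon and is therefore governed by the same local model as in the $\To^3$ case, the orbifold point of $M(n,m)$ lying off $Y$ and contributing only to the high-index part of the complex already excluded in part (1).
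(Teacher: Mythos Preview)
Your outline for parts (1)--(3) is broadly in line with the paper, with two slips. First, after the Morse--Bott perturbation the elliptic orbits $e_{(p,q)}$ have $\text{CZ}_\tau=-1$, not $+1$; the Conley--Zehnder contribution is $-e$, and it is the combination with the Pick-theorem expansion of $Q_\tau=2A(\bar P_\alpha)$ that produces $2\mathcal{L}_{n,m}(P_\alpha)+h$. Second, your justification of part (1) via ``additivity and nonnegativity of the ECH index'' and \eqref{intcyl} does not quite work: those are statements about holomorphic currents, not about indices of generators. The paper instead computes the index of an arbitrary orbit set \emph{containing} $e_\pm$ via an auxiliary (non-concave) path $\bar P_\alpha$ and Lemma~\ref{explicitindex}, and then sees directly from \eqref{CZeq} that the $e_\pm$ summands blow up under the concave perturbation.

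The real gap is in part (4). Your ``only if'' argument invokes partition conditions and the constraint $I(\alpha)-I(\beta)=1$, but these alone do not pin down the relative position of $P_\alpha$ and $P_\beta$. The paper's central tool, which you do not isolate, is a \emph{slice-class positivity inequality}: for any $x\in(0,1)$ and any holomorphic curve $C$ from $\alpha$ to $\beta$,
\[
a'(x)\times[C_x]\ \ge\ 0,
\]
where $[C_x]$ is computed combinatorially as a function $\sigma_{\alpha,\beta}(x)$ of the two orbit sets. This is what forces $P_\beta$ never to rise above $P_\alpha$ (Section~\ref{Pathcannotcross}) and, after a Fredholm-index case analysis (Equation~\eqref{FredInd}) showing there is exactly one non-trivial region with no interior lattice points, what rules out an adjacent trivial region and leaves precisely a single corner rounding with the correct $h$-behaviour. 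For the ``if'' direction the paper again uses positivity rather than neck-stretching: after a separate genus-zero lemma and a trivial-cylinder bijection $\mathcal{M}_1(\alpha,\beta)\cong\mathcal{M}_1(\alpha',\beta')$, positivity confines any curve in $\mathcal{M}_1(\alpha',\beta')$ to a slab $[x_1,x_2]\times\To^2$, where \cite[Lem.~3.17]{hutchings2005periodic} and Taubes \cite{taubes2002compendium} apply directly. Your neck-stretching/obstruction-bundle-gluing plan may be workable, but it is a different mechanism, and you would in any case still need the positivity argument and the genus-zero lemma to exclude stray components before the local count can be invoked.
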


See examples for $L(2,1)$ in figure \ref{Coroundingexamples}.

\begin{defi}
    \label{goodper}
    A perturbation of $\lambda_a$ for which the conditions 1., 2. and 3. holds is called a \textit{good perturbation}. 
\end{defi}

The following three subsections are dedicated to the proof of Proposition \ref{equicomgeo}. 

\subsection{Computations of the index}

Let $L>0$ and assume that $\Tilde{\lambda}_a^L$ is a Morse-Bott perturbation. In this section we prove parts $1., 2. $ and $3.$ of Proposition \ref{equicomgeo} by computing the $\text{ECH}$ index of an orbit set $\alpha=\{(\alpha_i,m_i)\}$. We also compute the Fredholm Index. 

\subsubsection{$\text{ECH}$ index}
\label{ECHindexconcave}

In what it follows we fix a trivialization over the contact structures of the Reeb orbits $e_+$ and $e_-$, that is, we fixed a vector $(v_1,v_2)\in \mathbb{Z}$ such that $(p,q)\times (v_1,v_2)=1$ and we choose the usual trivialization for $e_-$, see Lemma \ref{rotnumbers}. 

Before, we do the calculations we need to construct an auxiliary path $\bar{P}_\alpha$. Let $\alpha=\{(e_1,m_1)\}\cup\{(\alpha_i,m_i)\}\cup\{(e_2,m_2)\}$. Let $\alpha'=\{(\alpha_i,m_i)\}$, in subsection \ref{generators} we explained the path $P_\alpha$. By the homological conditions there exist unique $k$ and $l$ integers such that

$$m_1(v_1,v_2)+[\bar{P}_\alpha]+m_2(-1,0)=k(0,1)+l(n,m)$$

This define an unique polygonal path $\bar{P}_\alpha$. Notice that $\bar{P}_\alpha$ depends on the trivialization over $\{(e_+,m_+)\}$ and it is not a concave (nor convex) path. See figure \ref{PPalpha} for an example. 

\begin{figure}
    \centering
    \tikzset{every picture/.style={line width=0.75pt}} 

\begin{tikzpicture}[x=0.55pt,y=0.55pt,yscale=-1,xscale=1]

\draw  [draw opacity=0] (20,8) -- (309,8) -- (309,198) -- (20,198) -- cycle ; \draw  [color={rgb, 255:red, 175; green, 175; blue, 175 }  ,draw opacity=1 ] (20,8) -- (20,198)(40,8) -- (40,198)(60,8) -- (60,198)(80,8) -- (80,198)(100,8) -- (100,198)(120,8) -- (120,198)(140,8) -- (140,198)(160,8) -- (160,198)(180,8) -- (180,198)(200,8) -- (200,198)(220,8) -- (220,198)(240,8) -- (240,198)(260,8) -- (260,198)(280,8) -- (280,198)(300,8) -- (300,198) ; \draw  [color={rgb, 255:red, 175; green, 175; blue, 175 }  ,draw opacity=1 ] (20,8) -- (309,8)(20,28) -- (309,28)(20,48) -- (309,48)(20,68) -- (309,68)(20,88) -- (309,88)(20,108) -- (309,108)(20,128) -- (309,128)(20,148) -- (309,148)(20,168) -- (309,168)(20,188) -- (309,188) ; \draw  [color={rgb, 255:red, 175; green, 175; blue, 175 }  ,draw opacity=1 ]  ;
\draw    (60,188) -- (60,30) ;
\draw [shift={(60,28)}, rotate = 90] [color={rgb, 255:red, 0; green, 0; blue, 0 }  ][line width=0.75]    (10.93,-3.29) .. controls (6.95,-1.4) and (3.31,-0.3) .. (0,0) .. controls (3.31,0.3) and (6.95,1.4) .. (10.93,3.29)   ;
\draw    (60,188) -- (298.34,29.11) ;
\draw [shift={(300,28)}, rotate = 146.31] [color={rgb, 255:red, 0; green, 0; blue, 0 }  ][line width=0.75]    (10.93,-3.29) .. controls (6.95,-1.4) and (3.31,-0.3) .. (0,0) .. controls (3.31,0.3) and (6.95,1.4) .. (10.93,3.29)   ;
\draw [color={rgb, 255:red, 253; green, 0; blue, 0 }  ,draw opacity=1 ]   (100,48) -- (62,48) ;
\draw [shift={(60,48)}, rotate = 360] [color={rgb, 255:red, 253; green, 0; blue, 0 }  ,draw opacity=1 ][line width=0.75]    (10.93,-3.29) .. controls (6.95,-1.4) and (3.31,-0.3) .. (0,0) .. controls (3.31,0.3) and (6.95,1.4) .. (10.93,3.29)   ;
\draw [color={rgb, 255:red, 253; green, 0; blue, 0 }  ,draw opacity=1 ]   (300,28) -- (181.79,87.11) ;
\draw [shift={(180,88)}, rotate = 333.43] [color={rgb, 255:red, 253; green, 0; blue, 0 }  ,draw opacity=1 ][line width=0.75]    (10.93,-3.29) .. controls (6.95,-1.4) and (3.31,-0.3) .. (0,0) .. controls (3.31,0.3) and (6.95,1.4) .. (10.93,3.29)   ;
\draw [color={rgb, 255:red, 39; green, 0; blue, 255 }  ,draw opacity=1 ]   (160,88) -- (101.66,49.11) ;
\draw [shift={(100,48)}, rotate = 33.69] [color={rgb, 255:red, 39; green, 0; blue, 255 }  ,draw opacity=1 ][line width=0.75]    (10.93,-3.29) .. controls (6.95,-1.4) and (3.31,-0.3) .. (0,0) .. controls (3.31,0.3) and (6.95,1.4) .. (10.93,3.29)   ;
\draw [color={rgb, 255:red, 0; green, 0; blue, 252 }  ,draw opacity=1 ]   (180,88) -- (162,88) ;
\draw [shift={(160,88)}, rotate = 360] [color={rgb, 255:red, 0; green, 0; blue, 252 }  ,draw opacity=1 ][line width=0.75]    (10.93,-3.29) .. controls (6.95,-1.4) and (3.31,-0.3) .. (0,0) .. controls (3.31,0.3) and (6.95,1.4) .. (10.93,3.29)   ;
\end{tikzpicture}
    \caption{An example of a path $\bar{P}_\alpha$ where $\alpha=\{e_+^3,e_{(-1,0)},e_{(-2,3)},e_-^2\}$. In red arrows we represented the part of the path corresponding to $e_+$ and $e_-$. The rest of the path is in blue.}
    \label{PPalpha}
\end{figure}
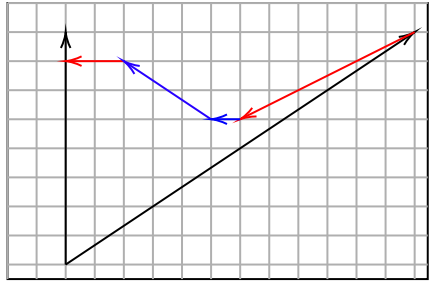

\begin{lemm}
    \label{explicitindex}
    Consider a good perturbation $\bar{\lambda}_a$ of $\lambda_a$ as in section \ref{Morse-Bott}.
     Let $\alpha=\{(e_+,m_+)\}\cup\{(e_-,m_-)\}\cup\{(\alpha_i,m_i)\}$ be a Reeb orbit set such that $[\alpha]=0\in H_1(Y)$. Then $I(\alpha,Z)$ does not depend on $Z\in H_2(\alpha,\emptyset,Z)$, futhermore
     \begin{enumerate}       
        
         \item (\textit{Relative Chern Class}) $C_\tau(\alpha)=c_1+c_2$ where $c_1\in \mathbb{Z}$ is the maximal integer such that $c_1(m,n)$ is contained in $\bar{P}_\alpha$. Analogously $c_2\in \mathbb{Z}$ is the maximal integer such that $c_2(0,1)$ is contained in $\bar{P}_\alpha$.

         \item (\textit{Relative self-intersection}) $Q_{\tau}(\alpha)=2A(\bar{P}_\alpha)$ where $A(\bar{P}_\alpha)$ denotes the area of the region defined by $\bar{P}_\alpha$ and the axis.

        \item (\textit{Conley-Zehnder Number}) Denote by $e$ the total number of elliptic orbits in $\{(\alpha_i,m_i)\}$. Then

         \begin{equation}
         \label{CZeq}
         CZ(\alpha)=-e+n_++n_-+ 2\displaystyle\sum_{i=1}^{n_+}\left\lfloor i\displaystyle\frac{a'(0)\times (v_1,v_2)}{a'(0)\times (p,q)} \right\rfloor+2\displaystyle\sum_{i=1}^{n_-}\left\lfloor -j \displaystyle\frac{a_2'(1)}{a_1(1)} \right\rfloor
         \end{equation}
         
    \end{enumerate}
        
\end{lemm}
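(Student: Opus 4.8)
The plan is to establish the three bulleted formulas by computing the three summands of the $\text{ECH}$ index \eqref{ECHindex} separately, after fixing convenient trivializations, and to reduce the two geometric terms $c_\tau$ and $Q_\tau$ to combinatorics of the auxiliary path $\bar P_\alpha$ by adapting the toric-model computations of \cite{Choi_2014} and \cite{hutchings2014lecture}.

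\textbf{Independence of $Z$ and the trivialization.} First I would observe that $H_2(L(n,m);\Z)=0$, so $H_2(Y,\alpha,\emptyset)$ is a single point; equivalently, part (d) of Proposition \ref{IndexECHproper} gives $I(\alpha,\emptyset,Z)-I(\alpha,\emptyset,Z')=\langle c_1(\xi),Z-Z'\rangle=0$ because $Z-Z'\in H_2(Y)=0$, and the same remark shows each term of the formula is independent of the chosen representative surface. Over $\pi^{-1}(a((0,1)))\cong(0,1)\times\To^2$ the plane field $\xi=\ker\lambda_a$ is spanned by a torus-invariant frame, which furnishes a canonical trivialization $\tau_0$ of $\xi$ along every interior orbit $e_{(p_i,q_i)}$, $h_{(p_i,q_i)}$; together with the trivializations of $\xi|_{e_+}$, $\xi|_{e_-}$ induced by $(v_1,v_2)$ and $(-1,0)$ this fixes a $\tau\in\mathcal{T}(\alpha,\emptyset)$, and the defining equation of $\bar P_\alpha$ is exactly the record of how $\tau$ fails to extend over $e_+$ and $e_-$ (the integers $k,l$ there exist because $[\alpha]=0$ in $H_1(Y)=\Z/n$).

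\textbf{The Chern and self-intersection terms.} These I would obtain from the toric-model arguments of \cite{Choi_2014,hutchings2014lecture}, the only new ingredient being that one corner of the cone is the ray $\{t(n,m)\}$ instead of a coordinate axis; the matrix $A_{(v_1,v_2)}\in\text{SL}_2(\Z)$ of \eqref{trivmatrix} identifies a neighbourhood of $e_+$ with one of the standard elliptic orbit and transports the standard formulas there. For $c_\tau$: a generic $\tau_0$-trivial section of $f^*\xi$ can be taken nonvanishing over the interior, so the count of zeros localizes near $e_+$ and near $e_-$, where it equals the extreme integers $c_1$, $c_2$ such that $c_1(n,m)$ and $c_2(0,1)$ lie on $\bar P_\alpha$; additivity of $c_\tau$ then yields $c_\tau(\alpha)=c_1+c_2$. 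For $Q_\tau$: decompose $Z$ into the relative classes of the successive edges of $\bar P_\alpha$ and expand via $Q_\tau(Z+Z')=Q_\tau(Z)+2Q_\tau(Z,Z')+Q_\tau(Z')$; each self- and cross-term is a lattice area (a triangle, respectively a parallelogram spanned by two edge vectors), and the shoelace identity assembles them into twice the signed area of the region cut off by $\bar P_\alpha$ and the two axes, i.e.\ $Q_\tau(\alpha)=2A(\bar P_\alpha)$. I would double-check this with \eqref{relativeQtau}, computing the asymptotic writhe directly from the slopes of the edges.

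\textbf{The Conley--Zehnder term.} I would split $\text{CZ}_\tau^I(\alpha)$ into the contributions of $e_+$, of $e_-$, and of the remaining orbits. By the construction in Section \ref{Morse-Bott} one has $\text{CZ}_\tau(e_{(p,q)}^k)=-1$ for every cover occurring and $\text{CZ}_\tau(h_{(p,q)})=0$, so with the conventions of Remark \ref{notationorbits} the orbits $\{(\alpha_i,m_i)\}$ contribute $-e$. For $e_+$, combining \eqref{CZelliptic} with Lemma \ref{rotnumbers} gives $\text{CZ}_\tau(e_+^i)=2\lfloor i\phi_+\rfloor+1$ with $\phi_+=\frac{a'(0)\times(v_1,v_2)}{a'(0)\times(p,q)}$, hence $\sum_{i=1}^{n_+}\text{CZ}_\tau(e_+^i)=n_++2\sum_{i=1}^{n_+}\lfloor i\phi_+\rfloor$; the same computation at $x=1$, where the orientation convention of Lemma \ref{exilens} reverses the sign of the effective rotation, gives $\sum_{j=1}^{n_-}\text{CZ}_\tau(e_-^j)=n_-+2\sum_{j=1}^{n_-}\lfloor -j\phi_-\rfloor$. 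Adding the three contributions produces \eqref{CZeq}.

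\textbf{Expected main obstacle.} The Conley--Zehnder term is essentially bookkeeping with \eqref{CZelliptic}. The real work is the pair $c_\tau$, $Q_\tau$: one must arrange the trivializations and the path $\bar P_\alpha$ so that the toric arguments of \cite{Choi_2014,hutchings2014lecture} apply at \emph{both} corners of the cone, including the non-standard corner along $(n,m)$ handled through $A_{(v_1,v_2)}$, and one must deal with the fact that $\bar P_\alpha$ is in general neither convex nor concave --- owing to the ``backward'' edges coming from the trivialization vectors $(v_1,v_2)$ and $(-1,0)$ --- so that the area in the second formula must be read as the algebraic (signed) area and the localization statement in the first formula must be phrased via the extreme lattice multiples of $(n,m)$ and $(0,1)$ lying on the path.
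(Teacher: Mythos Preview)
Your proposal is correct and covers the same ground as the paper, but your computation of $Q_\tau$ takes a genuinely different route. The paper builds an explicit admissible representative $S$ for the class $Z$: it starts with disjoint trivial cylinders over each orbit in $[-1,1]\times[0,1]\times\To^2$, then sweeps $c_1$ horizontal cylinders of homology $(n,m)$ across in the $x$-direction, performing negative surgeries at each crossing with a vertical cylinder (in the style of \cite{hutchings2005periodic}), and finally projects under the quotient $\pi$ so that the extreme horizontal cylinders collapse to disks. With this concrete $S$ the paper reads off $c_\tau$ from the section $x(1-x)\partial_x$ (zeros exactly on the $c_1+c_2$ collapsed disks) and $Q_\tau$ from \eqref{relativeQtau}: the writhe vanishes by construction, and the zeros of the projection of $\partial_s+\partial_x$ to $NS$ sit precisely at the surgery points, each contributing a $2\times 2$ determinant that sums to $2A(\bar P_\alpha)$.

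Your alternative---decomposing $Z$ along the edges of $\bar P_\alpha$ and expanding $Q_\tau$ via its quadratic law to get a shoelace sum---is the more algebraic counterpart and is also standard in the toric ECH literature; it avoids the explicit surgery picture at the cost of having to make sense of the ``edge pieces'' as relative classes and to track the cross-terms carefully when $\bar P_\alpha$ fails to be convex (which you correctly flag). Either method works here; the paper's explicit surface has the advantage that the same $S$ simultaneously yields $c_\tau$ and $Q_\tau$ and makes the vanishing of $w_\tau$ manifest, whereas your approach keeps the two computations separate and relies more on the formal properties in Proposition~\ref{IndexECHproper}. Your treatment of the Conley--Zehnder term and of the independence of $Z$ matches the paper's.
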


\begin{proof}
Since $H_2(Y)=0$ the ECH index does not depend on the the relative homology $Z\in H_2(\alpha,\emptyset,Y)$. Now we construct a surface $S$ such that $\partial S=\alpha$.

The construction of the $S$ is a very classical argument and can be found in different forms in \cite{Choi_2014,cristofaro2020proof,hutchings2014lecture} and others. Here we modify that construction to fit our case. 
  We can construct a surface $S$ in $[-1,1]\times Y$ such that $[S]\in H_2(\alpha,\beta,Y)$. Then we use this manifold to compute $Q_\tau$ and $C_\tau$.
        
    \textit{Construction of the surface $S$:} 
        Consider the projections $\pi:[0,1]\times \To^2\rightarrow L(n,m)$ and consider the natural lifts of the orbits $\alpha_i$, for the orbit $e_+$ choose any orbit $e_+'$ in $\{0\}\times \To^2$ with homology $(n,m)$ and $e_-$ choose any orbit $e_-'$ in $\{0\}\times \To^2$ with homology $(0,1)$. Denote by $0=x_+<x_1<\cdots <x_M<x_-=1$  with $M=\sum m_i$ and each $x_i$ represent the point ${x_i}\times \To^2$ at which $\alpha_i$ appears.

        We construct this surface in three diferent steps.
        
        \textbf{Step 1:} \textit{Disjoint Cylinders.} We now describe a construction of disjoint cylinders $\mathcal{C}$. At level $\{1\}\times [0,1]\times \To^2$ we realize the following procedure: for each $\alpha_i$ with multiplicity $m_i$  to obtained a family of trivial cylinders in $\mathbb{R}\times [0,1]\times \To^2$. Choose $m_i$ points $x_{i1},\dots,x_{im_i}$ in a small neighborhood of $x_i$ and not containing any other $x_j$ with $i\not=j$. For each $x_{ik}$ choose an orbit with homology $[\alpha_i$] disjoint from all the others. For the case $e_+$ we make the perturbation slightly toward the right, respectively for the case of $e_-$ we make the perturbation slightly toward the left. 
        By following the $s$ direction downwards up to $\{0\}\times \{0\}\times \To^2$ we obtained a set $\mathcal{C}_1$ of disjoint cylinders. 
                
        \textbf{Step 2:} 
        \textit{Construction of the surface $S'$.}
        By the homological conditions we have that 
        $$[\alpha]=c_1(n,m)+c_2(0,1)$$
        Begin with $c_1$ disjoint orbits with homology $(n,m)$ in $\{0\}\times \{0\}\times \To^2$ move this orbits in the $x$ direction forming horizontal cilinders. Each time these cylinders encounter a vertical cilinder we realize negative surgeries similar to \cite{hutchings2005periodic}, in that way we resolve the singularities. After crossing every vertical cylinder we have $c_2$ cylinders in the $x$ direction with homology $(0,1)$. To end this step we make a slighty perturbation on ${1}\times[0,1]\times \To^2$ such that $\alpha$ is the boundary on ${1}\times[0,1]\times \To^2$ of the obtained surface $S'$.

        \textbf{Step 3:} \textit{Projecting the surface $S'$ to obtain $S$.} Consider the projection of $S$' by the quotient map of $\pi:\mathbb{R}\times [0,1]\times \To^2\rightarrow \mathbb{R}\times L(n,m)$. Note that the $c_1$ cilinders in the $x$ direction with homology $(n,m)$ colapses into disks. Similarly, the $c_2$ cylinders in the $x$ direction with homology $(0,1)$ colapses into disks. 

        This ends the construction of the surface $S$. 
        
        We now use this surface to compute $C_\tau$ and $Q_\tau$. See figure \ref{TheSurface} for a schematic picture of the surface $S'$ when projected into $[-1,1]\times [0,1]$.

        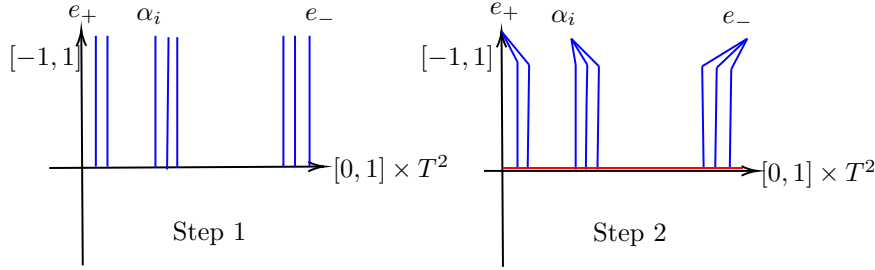
\begin{figure}
            \centering
            \tikzset{every picture/.style={line width=0.75pt}} 

\begin{tikzpicture}[x=0.55pt,y=0.55pt,yscale=-1,xscale=1]

\draw    (39,112) -- (226,111.01) ;
\draw [shift={(228,111)}, rotate = 179.7] [color={rgb, 255:red, 0; green, 0; blue, 0 }  ][line width=0.75]    (10.93,-3.29) .. controls (6.95,-1.4) and (3.31,-0.3) .. (0,0) .. controls (3.31,0.3) and (6.95,1.4) .. (10.93,3.29)   ;
\draw    (62,179) -- (61.01,19) ;
\draw [shift={(61,17)}, rotate = 89.65] [color={rgb, 255:red, 0; green, 0; blue, 0 }  ][line width=0.75]    (10.93,-3.29) .. controls (6.95,-1.4) and (3.31,-0.3) .. (0,0) .. controls (3.31,0.3) and (6.95,1.4) .. (10.93,3.29)   ;
\draw [color={rgb, 255:red, 4; green, 4; blue, 253 }  ,draw opacity=1 ]   (79,111) -- (79,21) ;
\draw [color={rgb, 255:red, 4; green, 4; blue, 253 }  ,draw opacity=1 ]   (71,111) -- (71,21) ;
\draw [color={rgb, 255:red, 4; green, 4; blue, 253 }  ,draw opacity=1 ]   (112,112) -- (112,21) ;
\draw [color={rgb, 255:red, 4; green, 4; blue, 253 }  ,draw opacity=1 ]   (120,113) -- (121,22) ;
\draw [color={rgb, 255:red, 4; green, 4; blue, 253 }  ,draw opacity=1 ]   (127,112) -- (127,22) ;
\draw [color={rgb, 255:red, 4; green, 4; blue, 253 }  ,draw opacity=1 ]   (218,111) -- (218,21) ;
\draw [color={rgb, 255:red, 4; green, 4; blue, 253 }  ,draw opacity=1 ]   (208,111) -- (208,21) ;
\draw [color={rgb, 255:red, 4; green, 4; blue, 253 }  ,draw opacity=1 ]   (200,111) -- (200,21) ;
\draw    (338,114) -- (524,114) ;
\draw [shift={(526,114)}, rotate = 180] [color={rgb, 255:red, 0; green, 0; blue, 0 }  ][line width=0.75]    (10.93,-3.29) .. controls (6.95,-1.4) and (3.31,-0.3) .. (0,0) .. controls (3.31,0.3) and (6.95,1.4) .. (10.93,3.29)   ;
\draw    (351,180) -- (350.01,20) ;
\draw [shift={(350,18)}, rotate = 89.65] [color={rgb, 255:red, 0; green, 0; blue, 0 }  ][line width=0.75]    (10.93,-3.29) .. controls (6.95,-1.4) and (3.31,-0.3) .. (0,0) .. controls (3.31,0.3) and (6.95,1.4) .. (10.93,3.29)   ;
\draw [color={rgb, 255:red, 4; green, 4; blue, 253 }  ,draw opacity=1 ]   (368,112) -- (369,41) ;
\draw [color={rgb, 255:red, 4; green, 4; blue, 253 }  ,draw opacity=1 ]   (361,112) -- (361,39) ;
\draw [color={rgb, 255:red, 4; green, 4; blue, 253 }  ,draw opacity=1 ]   (401,113) -- (401,41) ;
\draw [color={rgb, 255:red, 4; green, 4; blue, 253 }  ,draw opacity=1 ]   (408,113) -- (408.53,64.99) -- (409,41) ;
\draw [color={rgb, 255:red, 4; green, 4; blue, 253 }  ,draw opacity=1 ]   (416,113) -- (417,42) ;
\draw [color={rgb, 255:red, 4; green, 4; blue, 253 }  ,draw opacity=1 ]   (507,112) -- (508,44) ;
\draw [color={rgb, 255:red, 4; green, 4; blue, 253 }  ,draw opacity=1 ]   (497,112) -- (498,43) ;
\draw [color={rgb, 255:red, 4; green, 4; blue, 253 }  ,draw opacity=1 ]   (489,112) -- (488,42) ;
\draw [color={rgb, 255:red, 255; green, 0; blue, 0 }  ,draw opacity=1 ]   (351,112) -- (516,112) ;
\draw [color={rgb, 255:red, 4; green, 4; blue, 253 }  ,draw opacity=1 ]   (361,39) -- (350,18) ;
\draw [color={rgb, 255:red, 4; green, 4; blue, 253 }  ,draw opacity=1 ]   (369,41) -- (350,18) ;
\draw [color={rgb, 255:red, 4; green, 4; blue, 253 }  ,draw opacity=1 ]   (401,41) -- (398,23) ;
\draw [color={rgb, 255:red, 4; green, 4; blue, 253 }  ,draw opacity=1 ]   (409,41) -- (398,23) ;
\draw [color={rgb, 255:red, 4; green, 4; blue, 253 }  ,draw opacity=1 ]   (417,42) -- (398,23) ;
\draw [color={rgb, 255:red, 4; green, 4; blue, 253 }  ,draw opacity=1 ]   (508,44) -- (519,23) ;
\draw [color={rgb, 255:red, 4; green, 4; blue, 253 }  ,draw opacity=1 ]   (498,43) -- (519,23) ;
\draw [color={rgb, 255:red, 4; green, 4; blue, 253 }  ,draw opacity=1 ]   (488,42) -- (519,23) ;

\draw (232,101.4) node [anchor=north west][inner sep=0.75pt]    {$[ 0,1] \times T^{2}$};
\draw (122,147) node [anchor=north west][inner sep=0.75pt]   [align=left] {Step 1};
\draw (526,104.4) node [anchor=north west][inner sep=0.75pt]    {$[ 0,1] \times T^{2}$};
\draw (411,148) node [anchor=north west][inner sep=0.75pt]   [align=left] {Step 2};
\draw (9,25.4) node [anchor=north west][inner sep=0.75pt]    {$[ -1,1]$};
\draw (292,25.4) node [anchor=north west][inner sep=0.75pt]    {$[ -1,1]$};
\draw (50,-1.6) node [anchor=north west][inner sep=0.75pt]    {$e_{+}$};
\draw (213,0.4) node [anchor=north west][inner sep=0.75pt]    {$e_{-}$};
\draw (97,0.4) node [anchor=north west][inner sep=0.75pt]    {$\alpha _{i}$};
\draw (340,-3.6) node [anchor=north west][inner sep=0.75pt]    {$e_{+}$};
\draw (382,1.4) node [anchor=north west][inner sep=0.75pt]    {$\alpha _{i}$};
\draw (500,2.4) node [anchor=north west][inner sep=0.75pt]    {$e_{-}$};

\end{tikzpicture}
            \caption{Schematic representation of the surface $C$ and $S'$ in lemma \ref{explicitindex}. In this case $\alpha=\{(e_+,2),(\alpha_i,3),(e_-,2)\}$.}
            \label{TheSurface}
        \end{figure}

\begin{enumerate}
    \item (\textit{Relative Chern Class}) 
    We need a a generic section on $\xi|_S$ constant under $\tau$. Denote by $\eta$ the vector field over $S$ defined as $x(1-x)\partial_x$. It is easy to check that $\eta$ is a generic vector field constant under $\tau$. Notice that $\eta$ is $0$ exactly on the disks on $S$ obtained by the quotient that transform $S'$ into $S$. Therefore, $c_\tau(S)=\#\eta^{-1}(0)=c_1+c_2$.
    
    \item (\textit{Relative self-intersection}) 
        To calculate $Q_\tau(S)$ we use the expression given in equation $\eqref{relativeQtau}$ 
            $$Q_\tau(S)=c_1(NS,\tau)+w_\tau(S)$$
        By construction $w_\tau(S)=0$. Let $\phi$ be the field obtained by projecting $\partial_s+\partial_x$ into $S$. Notice that $\phi$ is $0$ exactly on the surgery points obtained by resolving singularities in step $2$. When resolving the singularites we obtained a number of zeros equal a determinant $\phi$ given by the resolution of the singularities. Since the determinant can be interpret as and area, a carefully organization of the terms will lead us to conclude that $\#\phi^{-1}(0)=2 A(P_\alpha')$.
            
       \item (\textit{Conley-Zehnder Number}) Notice that equation \eqref{CZeq} follows directly from lemma \ref{CZelliptic} and the assumptions in subsection \ref{Morse-Bott}. 
\end{enumerate}
\end{proof}

With this calculation in place we can conclude the proof of the part (b) of proposition \ref{equicomgeo}.

\begin{proof}{of parts 1., 2. and 3. of proposition \ref{equicomgeo}}
    
    We begin by proving 3. Suppose that we have a Morse-Bott perturbation of $\lambda_a$. Using Pick's theorem we can writte     $$Q_\tau(\alpha)=2\iota(\bar{P}_\alpha)+c_1+c_2+m+n+e+h-1$$
    Where $\iota(\bar{P}_\alpha)$ is the count of the interior points of the closed region defined by $\bar{P}_\alpha$ and the axes. Then 
    $$I(\alpha)=2\mathcal{L}(\bar{P}_\alpha)+h$$
    This proves that $I(\alpha)=I(P_\alpha)$. The action $\mathcal{A}$ and $l_\Omega$ are as close as we want in virtue of the Morse-Bott perturbation. A consequence of this calculation is that the set  $A_k=\{\alpha:I(\alpha)=k\}$ is finite. 

    It is easy to see that we can use the finitness of the sets $A_k$ to find $\epsilon>0$ and $L>0$ such that the perturbation $\lambda_a^{\epsilon,L}$ satisfies the properties 1., 2. and 3. of the proposition. 
\end{proof}

\subsubsection{Fredholm Index}    
    After the calculation of the ECH index we can easily deduce the Fredholm Index relevant for our case. In the following lemma we do not rule out $J$-holomorphic curves with genus different from zero. To rule out the non-zero curves we will make use of the results given in this as well as the result of sections \ref{Positivity} and \ref{Pathcannotcross}. See lemma \ref{genuszero}.
    
    \begin{lemm}[Combinatorial Fredholm Index]
    Suppose that $\alpha=\{(\alpha_i,m_i)\}$ and $\beta=\{(\beta_i,n_i)\}$ are admissible orbit sets and suppose that neither of $\alpha$ nor $\beta$ possesses the elliptic orbits $e_+$ or $e_-$ with any multiplicity. Let $C$ be $\mathcal{M}(\alpha,\beta)$ any irreducible $J$-holomorphic curve from $\alpha$ to $\beta$. Then
    \begin{equation}
    \label{FredInd}
        \text{ind}(C)=-2+2g+2 e_\beta+h+2 c_1+2 c_2
    \end{equation}
        Where $e_\beta$ denotes the number of elliptic orbits, $h$ is the total number of hyperbolic orbits and $c_1$ and $c_2$ denotes the difference between the Chern classes of $\alpha$ and $\beta$ calculated in lemma \ref{explicitindex}. 
    \end{lemm}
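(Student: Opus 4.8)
The plan is to evaluate each term of the Fredholm index formula \eqref{FredholmIndex} directly, in the spirit of Lemma \ref{explicitindex}, and then add them up. Throughout I would fix the trivialization $\tau$ coming from the Morse--Bott perturbation of Section \ref{Morse-Bott}: the one in which every elliptic orbit $e_{(p,q)}$ has rotation number a small negative real, so that $\text{CZ}_\tau(e_{(p,q)}^k)=-1$ for every covering multiplicity $k$ that can occur below the action bound $L$ (use $k|\theta|<1$), while every $h_{(p,q)}$ is positive hyperbolic with $\text{CZ}_\tau(h_{(p,q)})=0$ by \eqref{CZhyperbolic}--\eqref{CZelliptic}. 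Since $\alpha$ and $\beta$ contain neither $e_+$ nor $e_-$, these are the only orbits occurring at the ends of $C$, and since $\alpha$ and $\beta$ are admissible every hyperbolic orbit has multiplicity one.

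First I would handle the topological term. As $C$ is irreducible it is connected, so $\chi(C)=2-2g-N$, where $N$ is the total number of punctures of $C$. Split $N=N_e^++N_h^++N_e^-+N_h^-$ according to whether a puncture is positive or negative and whether it converges to a cover of an elliptic or of a hyperbolic orbit; by the multiplicity-one property $N_h^+$ (resp. $N_h^-$) is exactly the number of hyperbolic orbits of $\alpha$ (resp. $\beta$) met by $C$, and I set $h:=N_h^++N_h^-$ and $e_\beta:=N_e^-$ (when the ECH partition conditions hold, $N_e^-$ is just the number of elliptic orbits appearing in $\beta$). Next, for $c_\tau(C)$: because $H_2(L(n,m);\Z)=0$ the relative class is unique, so by additivity of the relative first Chern class together with the computation of Lemma \ref{explicitindex}(1) applied to the closing-up paths, $c_\tau(C)=c_\tau(\alpha)-c_\tau(\beta)=c_1+c_2$, with $c_1,c_2$ as in the statement. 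Finally, for the Conley--Zehnder term, each positive puncture over an elliptic orbit contributes $-1$ and each over a hyperbolic orbit contributes $0$, so $\sum_i\text{CZ}_\tau(\gamma_i^+)=-N_e^+$, and likewise $\sum_j\text{CZ}_\tau(\gamma_j^-)=-N_e^-$.

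Plugging these into \eqref{FredholmIndex} gives
\[
\text{ind}(C)=-(2-2g-N)+2(c_1+c_2)+(-N_e^+)-(-N_e^-)=-2+2g+2c_1+2c_2+\bigl(N-N_e^++N_e^-\bigr),
\]
and since $N-N_e^++N_e^-=N_h^++N_h^-+2N_e^-=h+2e_\beta$ the claimed identity \eqref{FredInd} follows. The main point to be careful about is the bookkeeping of the ends: one must check that no Conley--Zehnder value other than $-1$ (elliptic covers) and $0$ (hyperbolic) can occur, which is exactly what the Morse--Bott perturbation together with the action cutoff $L$ guarantees, and that the identity as stated does not yet require the ECH partition conditions --- those enter only later, in Sections \ref{Positivity} and \ref{Pathcannotcross}, where \eqref{FredInd} is combined with the writhe inequality to force $g=0$ and pin down the ends (see Lemma \ref{genuszero}).
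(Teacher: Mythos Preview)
Your proof is correct and follows essentially the same route as the paper: both evaluate the Fredholm formula \eqref{FredholmIndex} term by term using $\chi(C)=2-2g-N$, $c_\tau(C)=c_1+c_2$, and the Conley--Zehnder values $-1$ for elliptic covers and $0$ for hyperbolic orbits coming from the Morse--Bott perturbation. Your bookkeeping is in fact a bit more explicit than the paper's (you spell out $N-N_e^++N_e^-=h+2e_\beta$ and are careful that $e_\beta$ means the number of negative elliptic ends), but the argument is the same.
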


    \begin{proof}
        Recall from equation \eqref{FredholmIndex} that the Fredholm Index has the following form 
        \begin{align}
        \text{ind}(C)=-\chi(C)+2 c_\tau(C)+\displaystyle\sum_{i=1}^n \text{CZ}_\tau(\alpha_i)-\displaystyle\sum_{j=1}^m\text{CZ}_\tau(\beta_j)
        \end{align}
        The number of ends of the curve $C$ is given by the sum $e_\alpha+e_\beta+h$ where $e_\alpha$ denotes the ends at elliptic orbits of $\alpha$, similarly, $e_\beta$ denotes the ends at elliptic orbits of $\beta$. So the Euler Characteristic of $C$ is given by the formula 
        $$\chi(C)=2-2g-e_--e_+-h$$
    Also as explained in lemma \ref{explicitindex} we have that $c_\tau(C)=c_1^\alpha- c_1^\beta +c_1^\alpha-c_1^\beta$ which we have denoted by simply $c_1+c_2$. Summing all of this up we obtain equation \eqref{FredInd}.
    \end{proof}
\subsubsection{Embedded Contact Homology of Lens Spaces}
\label{ECH-of-Lens-Spaces}

Before continuing with the proof of Proposition \ref{equicomgeo} we use the calculation of section \ref{ECHindexconcave} to compute the embedded contact homology of the lens spaces when $\Gamma=0$. Here we use analogous arguments to the ones given in \cite[Sec.~3.7]{hutchings2014lecture} and in \cite[Theo.~7.6]{nelson2022embedded}. From these calculations we will deduce Lemma \ref{capacitiesellipsoids} and with making use of Proposition \ref{equicomgeo} we also prove Theorem \ref{concavecapacities}.

We begin by ilustrating the calculation of the ECH index for the ellipsoid with singularity of the form $M(n,1)$.

\begin{exa}[ECH index of the irrational ellipsoid with singularities] 
Suposse that $a$ and $b$ are numbers such that $a/b$ is irrational. We say that $E_n(a,b)$ is an \textit{irrational ellipsoid with singularities}. Analogous to the usual ellipsoid we have that for $E_n(a,b)$ there exist two elliptic orbits $\gamma_0$ and $\gamma_1$ with periods equal to $a$ and $b$. In this case the vector $(-1,0)$ define a trivialization of the contact structure of $\gamma_0$ and $(0,1)$ defines a trivialization over the orbit $\gamma_1$. With this trivialization the rotations numbers are equal $\phi_0=\displaystyle\frac{b-a}{nb}$  and $\phi_1=\displaystyle\frac{a-b}{na}$ for $\gamma_0$ and $\gamma_1$ respectively. 

Take $r+s=kn$ then using the Lemma \ref{explicitindex}  we can find that with this trivialization we have that
\begin{align}
    Q_\tau(\gamma_0^r\gamma_1^s)&=k^2p \\
    c_\tau(\gamma_0^r\gamma_1^s)&=2k
\end{align}

for $\gamma_0$ and $\gamma_1$ respectively. Therefore 
\begin{align}
\label{indexorbllipse}
I(\gamma_0^r\gamma_1^s)&=k^2p+2k+2\displaystyle\sum_{i=1}^r\left(\left\lfloor i\displaystyle\frac{b-a}{pb} \right\rfloor +1\right)+2\displaystyle\sum_{i=1}^s\left(\left\lfloor j \displaystyle\frac{a-b}{pa} \right\rfloor +1\right)    \\
&=k(k+1)p+2k+2\displaystyle\sum_{i=1}^r\left\lfloor i\displaystyle\frac{b-a}{pb} \right\rfloor+2\displaystyle\sum_{i=1}^s\left\lfloor j \displaystyle\frac{a-b}{pa} \right\rfloor
\end{align}

Which implies that $I(\gamma_0^r\gamma_1^s)$ is even. It follows from Proposition \ref{bijectieven} below that $I$ is a bijection with the even numbers.
\end{exa}

Now we prove the following central proposition.

\begin{pro}
\label{bijectieven}
Let $Y=\partial E_{(n,m)}(a,b)$ such that $b/a$ is irrational. The $\text{ECH}$ index $I$ is a bijective map betweeen the Reeb orbits sets of $Y$ and the even numbers. 
\end{pro}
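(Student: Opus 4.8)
The plan is to reduce the statement to a lattice‑point count and then invoke the irrationality of $b/a$.

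\emph{Identifying the orbit sets.} Since $b/a$ is irrational, the segment $\partial^{+}\Omega$ defining $E_{n,m}(a,b)$ has irrational slope, so by the description of the Reeb dynamics in Section~\ref{perturbation} there are no Morse--Bott families in the interior and $Y=\partial E_{n,m}(a,b)$ carries exactly the two elliptic orbits $\gamma_{0}=e_{+}$ and $\gamma_{1}=e_{-}$, of actions $a$ and $b$; both are nondegenerate because the rotation numbers $\phi_{0},\phi_{1}$ of Lemma~\ref{rotnumbers}, recalled in the Example above, are irrational. Hence every Reeb orbit set of $Y$ has the form $\gamma_{0}^{r}\gamma_{1}^{s}$ with $r,s\in\Z_{\ge 0}$, and since $H_{1}(Y)\cong\Z/n$ such a set is nullhomologous exactly when $(r,s)$ lies in
\[
\mathcal{S}:=\{(r,s)\in\Z_{\ge 0}^{2}\ :\ r+ms\equiv 0\ (\mathrm{mod}\ n)\}.
\]
Because $H_{2}(Y)=0$, the index $I(\gamma_{0}^{r}\gamma_{1}^{s})$ is a well-defined integer, and Lemma~\ref{explicitindex} (applied with empty ``middle'' part and $\beta=\emptyset$) together with Pick's theorem --- exactly as in the proof of parts 1--3 of Proposition~\ref{equicomgeo}, but with no ``$h$'' labels present --- gives $I(\gamma_{0}^{r}\gamma_{1}^{s})=2\,\mathcal{L}_{n,m}(\bar P_{\gamma_{0}^{r}\gamma_{1}^{s}})$, the even number already displayed in \eqref{indexorbllipse}. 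So $I$ maps the orbit sets of $Y$ into the nonnegative even integers, and it remains only to prove that $(r,s)\mapsto\mathcal{L}_{n,m}(\bar P_{\gamma_{0}^{r}\gamma_{1}^{s}})$ is a bijection $\mathcal{S}\to\Z_{\ge 0}$.

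\emph{The counting identity.} The key assertion is
\[
\mathcal{L}_{n,m}\!\left(\bar P_{\gamma_{0}^{r}\gamma_{1}^{s}}\right)=\#\{(r',s')\in\mathcal{S}\ :\ ar'+bs'<ar+bs\}.
\]
I would prove it by transporting the region bounded by $\bar P_{\gamma_{0}^{r}\gamma_{1}^{s}}$ and the two axes through the matrix $A_{(v_{1},v_{2})}\in\mathrm{SL}_{2}(\Z)$ of \eqref{trivmatrix} sending the chosen trivialization of $\xi|_{e_{+}}$ to the standard one: under this identification the enclosed lattice points become precisely the pairs $(r',s')$ in the sublattice $r'+ms'\equiv 0$, i.e.\ the elements of $\mathcal{S}$, while the slopes of $\bar P$ (controlled by $\phi_{0},\phi_{1}$) arrange that ``lying below the path'' is equivalent to ``having action $<ar+bs$''. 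The identity then follows by induction on $\mathcal{L}_{n,m}$: it is trivial for $(r,s)=(0,0)$, and passing from $(r,s)$ to the next element of $\mathcal{S}$ in the order by action enlarges the enclosed region by exactly one lattice point --- a finite local verification using the floor terms in \eqref{CZeq} and the explicit values of $\phi_{0},\phi_{1}$. For $n=1$ one has $\mathcal{S}=\Z_{\ge 0}^{2}$ and this is the classical count for $E(a,b)$.

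\emph{Conclusion and main obstacle.} Granting the identity, the proposition is formal: since $b/a$ is irrational the map $(r,s)\mapsto ar+bs$ is injective and proper on $\Z_{\ge 0}^{2}$, hence on $\mathcal{S}$, so we may enumerate $\mathcal{S}=\{(r_{0},s_{0}),(r_{1},s_{1}),\dots\}$ in strictly increasing order of $ar+bs$ with $(r_{0},s_{0})=(0,0)$; the identity then gives $I(\gamma_{0}^{r_{k}}\gamma_{1}^{s_{k}})=2k$ for all $k\ge 0$, so $I$ is a bijection from the orbit sets of $Y$ onto the nonnegative even integers, and the orbit set of index $2k$ has action $ar_{k}+bs_{k}=N^{n,m}_{k}(a,b)$ --- which is exactly what Lemma~\ref{capacitiesellipsoids} requires. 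The step I expect to be the main obstacle is the counting identity: matching the trivialization‑dependent path $\bar P_{\alpha}$ with the $\mathrm{mod}\ n$ constraint defining $\mathcal{S}$, and verifying that lattice points enter the region in the same order as the action order on $\mathcal{S}$. A closed‑form evaluation of the nested floor‑sums in \eqref{CZeq} is unpleasant, so the inductive ``one new lattice point per step'' argument is the route I would follow.
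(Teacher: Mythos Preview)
Your overall strategy—reduce to a lattice-point count and exploit the strict monotonicity of action coming from the irrationality of $b/a$—is exactly the paper's, and the ``counting identity'' you isolate is equivalent to the paper's target $I(\gamma_1^r\gamma_2^s)=2\eta(k_1,k_2)$. But there is a real gap in how you propose to reach it.

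First, the sentence ``Lemma~\ref{explicitindex}\ldots together with Pick's theorem, exactly as in the proof of parts 1--3 of Proposition~\ref{equicomgeo}, gives $I=2\,\mathcal{L}_{n,m}(\bar P)$'' is not justified. That argument in Proposition~\ref{equicomgeo} treats orbit sets \emph{not} containing $e_+,e_-$, where after the concave/Morse--Bott perturbation every elliptic orbit contributes $CZ=-1$; those $-1$'s are precisely what makes the floor terms disappear and lets Pick finish the job. Here the orbit set consists \emph{entirely} of $e_+,e_-$ with their genuine rotation numbers, so the Conley--Zehnder contribution is the pair of floor-sums in \eqref{CZeq}, and the identity $I=2\,\mathcal{L}_{n,m}(\bar P)$ is not a corollary of Pick---it \emph{is} the content of the proposition. (Note also that $\bar P_\alpha$ is explicitly not concave, so $\mathcal{L}_{n,m}(\bar P_\alpha)$ is not even well defined as written.)

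Second, the proposed inductive proof of the counting identity is harder than advertised. Consecutive elements of $\mathcal{S}$ in action order are not ``local'': the step from $(r,s)$ to the next $(r',s')$ can change both coordinates by arbitrarily large amounts, so the promised ``finite local verification using the floor terms'' does not exist. What the paper does instead is a one-shot geometric decomposition: it places the lattice point corresponding to $\gamma_1^r\gamma_2^s$ inside $V_{n,m}$, draws the line $L$ of slope $b/a$ through it, and splits the region below $L$ into three pieces $R_1,R_2,R_3$. Pick's theorem on the quadrilateral $R$ accounts for $Q_\tau+c_\tau$; the floor-sum $\sum_j\lfloor j\,b/a\rfloor$ is identified with the lattice count in $R_2$; and the other floor-sum $\sum_i\lfloor i\,\phi_+\rfloor$ is identified with minus the lattice count in $R_3$ after applying the $\mathrm{SL}_2(\Z)$ matrix $A_{(v_1,v_2)}$ of \eqref{trivmatrix}. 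Your idea of ``transporting through $A_{(v_1,v_2)}$'' is exactly this last move, but it is applied only to the one triangular piece $R_3$, not to the whole region, and it replaces the induction entirely.
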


\begin{proof} 
Take $(k_1,k_2)$ a lattice point contained in $V_{n,m}$. Let $\eta(k_1,k_2)$ be the number of lattice points contained in the region defined by the line $L$ with slope $b/a$ passing through that point and the axis of $V_{n,m}$. Since $b/a$ is irrational it is clear that $\eta$ is a bijective function from the lattice points in $V_{n,m}$ and the integers.

Take a vector $(v_1,v_2)$ in $\mathbb{Z}^2$ such that $(p,q)\times(v_1,v_2)=1$ pointing towards the interior of $V_{n,m}$.  Take $(r,s)$ a lattice point contained in $V_{n,m}$. It is easy to check that there exist exactly one $k$ and one $l$ such that $(k_1,k_2)=k(p,q)+r(v_1,v_2)$. Equivalently, we say that there exist exactly one $k$ and one $l$ such that  $k(p,q)+l(v_1,v_2)+r(-1,0)+s(0,-1)=0$ which implies that the Reeb orbits of $Y$  are in bijection with the lattice points of $V_{(n,m)}$. More precisely given $\gamma_1^{r}\gamma_2^s$ it defines a unique closed region with the axis and the integers $(k_1,k_2)$ is where the end of $r(v_1,v_2)$ and $s(-1,0)$ meet. 

Therefore, it is enough to prove that $I(\gamma_1^r\gamma_2^s)=2\eta(k_1.k_2)$. Futhermore, the argument in the above paragraph suggests that $\gamma_1$ can be indentified with $(v_1,v_2)$ and $\gamma_2$ can be identified $(-1,0)$.

Let $\gamma_1^r\gamma_2^s$ with homology equal $0$. Denote by $R$ be the region defined by the Reeb Orbits set and the axis. Notice that the region $R$ and the line $L$ naturally defined three subregions: the region $R_1$ defined by the axis and two of three lines $L$, the vector representing $\gamma_2^s$ the vector representing $\gamma_1^s$, the region $R_2$ defined by the $y$-axis and the line $L$ and the vector representing $\gamma_2^s$, and the region $R_3$ defined by the $(p,q)$-axis, the line $L$ and the vector representing $\gamma_1^r$. 

We examplify this situation in figure \ref{L(3,2) counting} for $L(3,2)$ and $(v_1,v_2)=(-2,1)$. Also notice that there are two other possible cases, in the case of this figure the line $L$ goes over the vector corresponding to $\gamma_2^s$ and under the vector corresponding to $\gamma_1^r$. All of them are similar so we suppose that we are in this same setting that is $L$ goes over the vector corresponding of $\gamma_1^s$ and under the vector corresponding to $\gamma_1^s$.

Notice that 
\begin{align}    
\eta(k_1,k_2)=\bar{\mathcal{L}}(R_1)+\bar{\mathcal{L}}(R_2)-s-2
\end{align}

where $\iota(R_i)$ is the number of interior lattice points of $R_i$ and $b(R_i)$  is the number of boundary lattice points.

    \begin{figure}
        \centering
       \tikzset{every picture/.style={line width=0.75pt}} 

\begin{tikzpicture}[x=0.50pt,y=0.50pt,yscale=-1,xscale=1]

\draw  [draw opacity=0][fill={rgb, 255:red, 233; green, 237; blue, 247 }  ,fill opacity=0.14 ] (-11,-0.5) -- (480.5,-0.5) -- (480.5,333.5) -- (-11,333.5) -- cycle ; \draw  [color={rgb, 255:red, 216; green, 216; blue, 216 }  ,draw opacity=1 ] (-11,-0.5) -- (-11,333.5)(25,-0.5) -- (25,333.5)(61,-0.5) -- (61,333.5)(97,-0.5) -- (97,333.5)(133,-0.5) -- (133,333.5)(169,-0.5) -- (169,333.5)(205,-0.5) -- (205,333.5)(241,-0.5) -- (241,333.5)(277,-0.5) -- (277,333.5)(313,-0.5) -- (313,333.5)(349,-0.5) -- (349,333.5)(385,-0.5) -- (385,333.5)(421,-0.5) -- (421,333.5)(457,-0.5) -- (457,333.5) ; \draw  [color={rgb, 255:red, 216; green, 216; blue, 216 }  ,draw opacity=1 ] (-11,-0.5) -- (480.5,-0.5)(-11,35.5) -- (480.5,35.5)(-11,71.5) -- (480.5,71.5)(-11,107.5) -- (480.5,107.5)(-11,143.5) -- (480.5,143.5)(-11,179.5) -- (480.5,179.5)(-11,215.5) -- (480.5,215.5)(-11,251.5) -- (480.5,251.5)(-11,287.5) -- (480.5,287.5)(-11,323.5) -- (480.5,323.5) ; \draw  [color={rgb, 255:red, 216; green, 216; blue, 216 }  ,draw opacity=1 ]  ;
\draw    (25,323.5) -- (25,37.5) ;
\draw [shift={(25,35.5)}, rotate = 90] [color={rgb, 255:red, 0; green, 0; blue, 0 }  ][line width=0.75]    (10.93,-3.29) .. controls (6.95,-1.4) and (3.31,-0.3) .. (0,0) .. controls (3.31,0.3) and (6.95,1.4) .. (10.93,3.29)   ;
\draw    (25,323.5) -- (455.34,36.61) ;
\draw [shift={(457,35.5)}, rotate = 146.31] [color={rgb, 255:red, 0; green, 0; blue, 0 }  ][line width=0.75]    (10.93,-3.29) .. controls (6.95,-1.4) and (3.31,-0.3) .. (0,0) .. controls (3.31,0.3) and (6.95,1.4) .. (10.93,3.29)   ;
\draw [color={rgb, 255:red, 253; green, 6; blue, 6 }  ,draw opacity=1 ]   (61,215.5) -- (27,215.5) ;
\draw [shift={(25,215.5)}, rotate = 360] [color={rgb, 255:red, 253; green, 6; blue, 6 }  ,draw opacity=1 ][line width=0.75]    (10.93,-3.29) .. controls (6.95,-1.4) and (3.31,-0.3) .. (0,0) .. controls (3.31,0.3) and (6.95,1.4) .. (10.93,3.29)   ;
\draw [color={rgb, 255:red, 253; green, 6; blue, 6 }  ,draw opacity=1 ]   (97,215.5) -- (63,215.5) ;
\draw [shift={(61,215.5)}, rotate = 360] [color={rgb, 255:red, 253; green, 6; blue, 6 }  ,draw opacity=1 ][line width=0.75]    (10.93,-3.29) .. controls (6.95,-1.4) and (3.31,-0.3) .. (0,0) .. controls (3.31,0.3) and (6.95,1.4) .. (10.93,3.29)   ;
\draw [color={rgb, 255:red, 255; green, 0; blue, 0 }  ,draw opacity=1 ]   (241,143.5) -- (170.79,178.61) ;
\draw [shift={(169,179.5)}, rotate = 333.43] [color={rgb, 255:red, 255; green, 0; blue, 0 }  ,draw opacity=1 ][line width=0.75]    (10.93,-3.29) .. controls (6.95,-1.4) and (3.31,-0.3) .. (0,0) .. controls (3.31,0.3) and (6.95,1.4) .. (10.93,3.29)   ;
\draw [color={rgb, 255:red, 255; green, 0; blue, 0 }  ,draw opacity=1 ]   (313,107.5) -- (242.79,142.61) ;
\draw [shift={(241,143.5)}, rotate = 333.43] [color={rgb, 255:red, 255; green, 0; blue, 0 }  ,draw opacity=1 ][line width=0.75]    (10.93,-3.29) .. controls (6.95,-1.4) and (3.31,-0.3) .. (0,0) .. controls (3.31,0.3) and (6.95,1.4) .. (10.93,3.29)   ;
\draw [color={rgb, 255:red, 255; green, 0; blue, 0 }  ,draw opacity=1 ]   (385,71.5) -- (314.79,106.61) ;
\draw [shift={(313,107.5)}, rotate = 333.43] [color={rgb, 255:red, 255; green, 0; blue, 0 }  ,draw opacity=1 ][line width=0.75]    (10.93,-3.29) .. controls (6.95,-1.4) and (3.31,-0.3) .. (0,0) .. controls (3.31,0.3) and (6.95,1.4) .. (10.93,3.29)   ;
\draw [color={rgb, 255:red, 255; green, 0; blue, 0 }  ,draw opacity=1 ]   (457,35.5) -- (386.79,70.61) ;
\draw [shift={(385,71.5)}, rotate = 333.43] [color={rgb, 255:red, 255; green, 0; blue, 0 }  ,draw opacity=1 ][line width=0.75]    (10.93,-3.29) .. controls (6.95,-1.4) and (3.31,-0.3) .. (0,0) .. controls (3.31,0.3) and (6.95,1.4) .. (10.93,3.29)   ;
\draw [color={rgb, 255:red, 255; green, 0; blue, 0 }  ,draw opacity=1 ]   (169,179.5) -- (98.79,214.61) ;
\draw [shift={(97,215.5)}, rotate = 333.43] [color={rgb, 255:red, 255; green, 0; blue, 0 }  ,draw opacity=1 ][line width=0.75]    (10.93,-3.29) .. controls (6.95,-1.4) and (3.31,-0.3) .. (0,0) .. controls (3.31,0.3) and (6.95,1.4) .. (10.93,3.29)   ;
\draw [color={rgb, 255:red, 3; green, 27; blue, 255 }  ,draw opacity=1 ][fill={rgb, 255:red, 138; green, 65; blue, 65 }  ,fill opacity=1 ]   (20,126) -- (192.5,313) ;
\draw  [color={rgb, 255:red, 151; green, 171; blue, 235 }  ,draw opacity=0.26 ][line width=3] [line join = round][line cap = round] (172.5,87) .. controls (172.5,87) and (172.5,87) .. (172.5,87) ;

\draw (81,218.5) node [anchor=north west][inner sep=0.75pt]   [align=left] {{\scriptsize (2,3)}};
\draw (39,181.4) node [anchor=north west][inner sep=0.75pt]    {$R_{2}$};
\draw (49,243.4) node [anchor=north west][inner sep=0.75pt]    {$R_{1}$};
\draw (171,190.4) node [anchor=north west][inner sep=0.75pt]    {$R_{3}$};
\draw (80,163) node [anchor=north west][inner sep=0.75pt]  [font=\large] [align=left] {L};

\end{tikzpicture}    
        \caption{Example for $L(3,2)$ of proposition \ref{bijectieven}.}
        \label{L(3,2) counting}
    \end{figure}
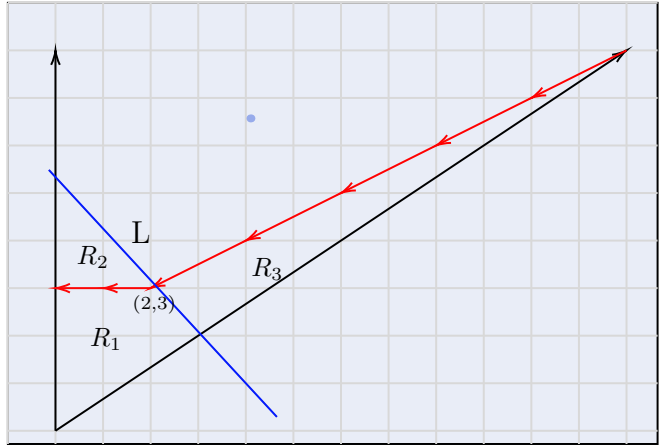

As explained in Lemma \ref{rotnumbers} the vector $(v_1,v_2)$ induced a triviazalition over the contact structure restricted to $\gamma_1$. By the same lemma we also have that with this trivialization
\begin{align}
Q_\tau(\gamma^r\gamma^s)&=2 A(R) \\    
C_\tau(\gamma^r\gamma^s)&=|k|+|k'| 
\end{align}
  Here $A(R)$ is the area of the region $R$, and, $k$ and $k'$ are integers such that $r(v_1,v_2)+s(-1,0)=k(p,q)+k'(0,1)$.   
We also have,
\begin{align}
    CZ_\tau(\gamma^r\gamma^s)&=r+s+ 2\displaystyle\sum_{i=1}^r\left\lfloor i\displaystyle\frac{(a,b)\times (v_1,v_2)}{(a,b)\times (p,q)} \right\rfloor+2\displaystyle\sum_{i=1}^s\left\lfloor j \displaystyle\frac{b}{a} \right\rfloor
\end{align}

By Pick's theorem $$Q_\tau(\gamma_1^r\gamma_2^s)=2\iota(R_1)+2\iota(R_3)+b(R_1)+b(R_3)-2$$

Also $b(R_1)+b(R_3)=|k|+|k'|+r+s$. Therefore, 
\begin{align*}I(\gamma_1^r\gamma_2^s)=
    &2\left(\iota(R_1)+\iota(R_3)+b(R_1)+b(R_3)-1\right)\\
    +&2\left(\displaystyle\sum_{i=1}^r\left\lfloor i\displaystyle\frac{(a,b)\times (v_1,v_2)}{(a,b)\times (p,q)} \right\rfloor+\displaystyle\sum_{i=1}^s\left\lfloor j \displaystyle\frac{b}{a} \right\rfloor\right)
    \end{align*}  
    It is easy to see that $\sum_{i=1}^s\left\lfloor j \displaystyle\frac{b}{a} \right\rfloor$ correspond to $\mathcal{L}(R_2)-s-1$. Therefore we can rewrite the above equation as 
    \begin{align*}I(\gamma_1^r\gamma_2^s)=&
    2\left(\mathcal{L}(R_1)+\mathcal{L}(R_3)+\mathcal{L}(R_2)-s-2\right)\\
    +&2\left(\displaystyle\sum_{i=1}^r\left\lfloor i\displaystyle\frac{(a,b)\times (v_1,v_2)}{(a,b)\times (p,q)} \right\rfloor\right)
    \end{align*}  
 So conclude the proof, it is enough to see that $\sum_{i=1}^r\left\lfloor i\frac{(a,b)\times (v_1,v_2)}{(a,b)\times (p,q)} \right\rfloor=-\mathcal{L}(R_3)$. We can check this by noticing that this sum correspond to minus the sum of points of the region $R_3'$ obtained by multiplying the region $R_3$ by the matrix defined in equation (\ref{trivmatrix}). Since this matrix is $\text{SL}_2(\mathbb{Z})$ the numbers of points in $R_3$ and $R_3'$ remains the same. 

 This concludes the proof.
\end{proof}

\begin{col} Suppose that $Y$ is a lens space. Then
\label{ECHlenscol}
\begin{align}
\label{ECHlens}
\text{ECH}_*(Y,\emptyset) = \left\{ 
    \begin{array}{lcc}
                \mathbb{Z}_2     &   \text{if } &  *=2k \\
                       0         &   \text{if } &  *=2k+1
             \end{array}
   \right.
\end{align}    
\end{col}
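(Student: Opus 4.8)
The plan is to reduce to the irrational ellipsoid with singularities, whose chain complex was completely described in Proposition \ref{bijectieven}, and then observe that the differential is forced to vanish for degree reasons. By Lemma \ref{exilens} every lens space $L(n,m)$ (with its toric contact structure) is the boundary $\partial E_{n,m}(a,b)$; choosing $b/a$ irrational makes $\partial^+\Omega$ a single segment of irrational slope, so that $a'(x)$ is never proportional to a primitive integer vector and there are no Morse--Bott $\mathbb{S}^1$-families: the induced contact form $\lambda_a$ is nondegenerate, with exactly two simple Reeb orbits $\gamma_0,\gamma_1$, both elliptic. Since, by Taubes' isomorphism, $\text{ECH}_*(Y,\xi,0)$ is independent of the nondegenerate contact form and of $J$, it suffices to compute with this $\lambda_a$ and a generic $J$.

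Next I would read off the structure of $\text{ECC}_*(Y,\lambda_a,0,J)$. On a lens space $c_1(\xi)+2\text{PD}(0)$ is torsion, so $I$ defines an absolute $\mathbb{Z}$-grading. The generators are the (automatically admissible) orbit sets $\gamma_0^{k_1}\gamma_1^{k_2}$ lying in homology class $0$, and by Proposition \ref{bijectieven} the map $I$ is a bijection from these generators onto the set of even integers; moreover the index is twice a count of lattice points, hence $I\geq 0$ with $I=0$ only for the empty orbit set. Therefore $\text{ECC}_{2k}(Y,\lambda_a,0,J)\cong\mathbb{Z}_2$ for every $k\geq 0$, and $\text{ECC}_j=0$ whenever $j$ is odd (or negative). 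The differential $\partial$ lowers the absolute grading by one, so it sends each even-degree group into an odd-degree group, which is zero; hence $\partial=0$ and $\text{ECH}_*(Y,\xi,0)=\text{ECC}_*(Y,\lambda_a,0,J)$, which is precisely the formula in the statement.

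The only genuinely substantive input here is Proposition \ref{bijectieven} --- in particular the surjectivity half, that $I$ hits \emph{every} even integer, so that no even degree is missing --- together with the fact that $L(n,m)$ really does occur as $\partial E_{n,m}(a,b)$ with a nondegenerate contact form. Everything after that is formal: the corollary is essentially a repackaging of Proposition \ref{bijectieven} with the invariance of ECH, so I do not anticipate a serious obstacle. (Alternatively one could invoke Taubes' isomorphism with monopole Floer homology and the fact that lens spaces are $L$-spaces, but the self-contained route above is preferable given what is already available.)
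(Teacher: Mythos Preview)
Your proposal is correct and follows essentially the same route as the paper: realize the lens space as $\partial E_{n,m}(a,b)$ with $b/a$ irrational, invoke Proposition~\ref{bijectieven} to see that all generators sit in even degree, conclude $\partial=0$ for grading reasons, and then appeal to the invariance of ECH. The paper's own proof says exactly this in one sentence; your version simply spells out the nondegeneracy of $\lambda_a$ and the $\mathbb{Z}$-grading more carefully.
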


\begin{proof}
This follows directly from the fact that the differential is an operator of index $-1$. By proposition \ref{bijectieven} it follows that the differential is equal to zero and therefore the homology correspond to the one given in equation \ref{ECHlens}. The result follows from the topological invariance of the embedded contact homology. 
\end{proof}

Now we can prove Lemma \ref{capacitiesellipsoids} where we stablish the capacities for the ellipsoids with singularity.

\begin{proof}[proof of Lemma \ref{capacitiesellipsoids}]
    Suppose that $a/b$ is irrational. Notice that  $\mathcal{A}(e_1^{k_1}e_2^{k_2})=k_1a+k_2b$ then it follows from Corollary \ref{ECHlenscol} that the capacities are indeed a reorganization of the numbers $k_1a+k_2b$ with an additional homology condition. Choose a primitive vector $(v_1,v_2)$ such that $(v_1,v_2)\times (m,n)=1$. Identify $e_1$ with $(v_1,v_2)\in \mathbb{Z}^2$ and $e_2$ with $(0,1)\in \mathbb{Z}^2$. Then the condition $[e_1^{k_1}e_2^{k_2}]=0\in H_1(\partial E_{m,n}(a,b))$ becomes
    $$k_1(v_1,v_2)+k_2(-1,0)=s(m,n)+l(0,1)$$
    from which we deduce the relationship $k_1+nk_2=lm$. The result follows. 
\end{proof}

Using the Proposition \ref{equicomgeo} we end this section with the proof of Theorem \ref{concavecapacities} where we stablish the capacities for any concave domain in $M(n,m)$.

\begin{proof}[proof of Theorem \ref{concavecapacities}]    
    Let $a=(a_1,a_2):[0,1]\rightarrow V_{n,m}$ be the function such that $a[0,1]=\partial \Omega$ and $\lambda_a=a_1\dd t_1+a_2\dd t_2$. We aproximate the function $a$ by a family of concave smooth functions $a_\epsilon:[0,1]\rightarrow V_{n,m}$ such that $a_\epsilon(x)=a(x)$ if $[\epsilon,1-\epsilon]$, $a'_\epsilon(0)\rightarrow u_0$ and  $a'_\epsilon(1)\rightarrow u_1$ when $\epsilon\rightarrow 0$. This construction and lemma \ref{rotnumbers} implies that the ECH index of $e_+$ and $e_-$ tends to infinity as $\epsilon\rightarrow 0$. By the continuity of the spectrum with respect $a$ (see lemma 2.3 and 2.4 of \cite{Choi_2014}) it follows that it is enough to consider generators that does not contained multiples of $e_+$ or $e_-$.
    
    Notice that by Lemma \ref{capsimp} and Corollary \ref{ECHlenscol} it is enough to prove that the sum of all concave generators with elliptic label is the only closed, non-exact and minimal sum of generators. 

    Suppose that $\Lambda=\alpha_1+\cdots+\alpha_r$ is the sum of all the concave elliptic generators. Take $1\leq i\leq r$ and consider $\partial \alpha_i=\alpha_i^1+\cdots +\alpha_i^s$. Notice that for $\alpha_i^j$ with $0\leq j \leq s$ is a generator with index $2k-1$ and exactly one $'\hat{h}'$ label. By definittion there exist exactly one $\alpha_{i'}$ with $1\leq i'\leq r$ and $i'\not =i$ such that  $P_{\alpha_{i'}}$ that is obtained by corounding a corner of $P_{\alpha_i^j}$. Therefore $\Lambda$ is closed. 
    The sum of generators $\Lambda$ is no null-homologous because the differential by definition increases the number of $'h'$ labels by one. 
    
    Finally, we prove that $\Lambda$ is minimal. Suppose that $\Lambda=\Lambda'+\Lambda''$ such that $\partial \Lambda''=0$ therefore $\partial \Lambda'=0$. Then without loses of generality we can suppose that $\Lambda''$ is exact. Since $\Lambda$ consist only of concave elliptic generators this is not possible. So $\Lambda$ is minimal. An analogous argument proves that $\Lambda$ is the only non-nullhomologous sum of generators. 

\end{proof}

\subsection{Positivity}
\label{Positivity}
As usual we suppose that we are working with a good perturbation of $\lambda_a$ (see Definition \ref{goodper}).

In this section we prove an important property that impose major restrictions on what kind of $J$-holomorphic curves can appear. 
The lemma and its proof are inspired by \cite{keon}. This is a consequences of the intersection positivity of $J$-holomorphic curves in four dimensions. 

It is interesting to note that this property has appeared in different ocassions in the literature, see for example \cite{keon,cristofaro2020proof,hutchings2006rounding}. Sometimes this property is also called \textit{local energy inequality} as in \cite{hutchings2005periodic,yao2022computing}.

Let $\alpha$ and $\beta$ be a couple of Reeb sets that do not possesses the orbits $e_+$ and $e_-$. Suppose that $S$ is a surface with an homology class in $H_2(Y,\alpha,\beta)$. Notice that for $x\in (0,1)$ we can define an intersection $S_x=S\cap \mathbb{R}\times\{x\}\times \To^2$. 

Notice that the orientation of $S_x$ is induce by the orientation of $S$. Our convention (which follows the convention of \cite[sec~3.4]{hutchings2005periodic}) is that we take the opposite of the usual `outer normal first' convention. Therefore we get a well-defined class $[S_x]\in H_1(\To^2)$ which is called the \textit{slice class}.

\begin{lemm}
    Suppose that $C\in \mathcal{M}(\alpha,\beta)$ then for every $x$ we have that 
    \begin{equation}
        \label{positivity}
        a'(x)\times [C_x]\geq 0
    \end{equation}
    with equality if and only if $C_x=0$.
\end{lemm}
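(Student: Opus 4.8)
The statement is the ``local energy inequality'' for $J$-holomorphic curves in the symplectization $\mathbb{R}\times L(n,m)$, and the natural strategy is to compare $C$ with the trivial cylinder $T_x = \mathbb{R}\times (\{x\}\times\To^2)$ over the Morse--Bott family at the slice parameter $x$. (When $x$ is a value where $a'(x)$ is proportional to a primitive vector, $T_x$ is a union of trivial cylinders over the Reeb orbit(s) there; for generic $x$ it is just a torus of closed Reeb-flow-invariant circles, still pseudoholomorphic with respect to a suitable $J$.) The plan is to first recall that since $J$ is chosen compatibly with the toric symmetry, each such $T_x$ is a $J$-holomorphic submanifold of $\mathbb{R}\times Y$; then to invoke intersection positivity in dimension four, which tells us that the algebraic intersection number $C\cdot T_x$ is a sum of strictly positive local contributions and is therefore $\geq 0$, with equality exactly when $C$ and $T_x$ are disjoint. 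Finally I would identify this intersection number $C\cdot T_x$ with the quantity $a'(x)\times[C_x]$ by a homological computation on the slice, and observe that $C\cap T_x=\emptyset$ is equivalent to $C_x = 0$ as a $1$-cycle in $\{x\}\times\To^2$.

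In more detail, the core steps are: (1) For fixed $x\in(0,1)$, note $C_x = C\cap(\mathbb{R}\times\{x\}\times\To^2)$ is a compact $1$-cycle (the asymptotic conditions at the ends of $C$ guarantee that near $s=\pm\infty$ the curve lies over values of the moment map bounded away from $x$, so the intersection is compact), with a well-defined slice class $[C_x]\in H_1(\To^2)$ using the stated orientation convention. (2) The local intersection number of $C$ with $T_x$ at a point of $C\cap T_x$ is, by positivity of intersections of $J$-holomorphic curves (Taubes/McDuff, as used in \cite{hutchings2005periodic}), a strictly positive integer. Summing, $\#(C\cap T_x)\ge 0$ with equality iff $C$ and $T_x$ are disjoint, i.e. iff $C_x=0$. (3) The total algebraic intersection number $C\cdot T_x$ equals the homological pairing of $[C_x]$ with the direction of the trivial cylinder: writing $a'(x) = (a_1'(x),a_2'(x))$, the trivial cylinder $T_x$ is ``ruled'' by circles in the $\To^2$-direction determined by $a'(x)$ (the Reeb direction there is proportional to $(-a_2'(x),a_1'(x))$ by the form of $\lambda_a = a_1\,dt_1 + a_2\,dt_2$), so the signed count of intersections of the $1$-cycle $[C_x]$ with this ruling on the $2$-torus is precisely $a'(x)\times[C_x]$ up to a sign fixed by the chosen orientation convention. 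Combining (2) and (3) gives $a'(x)\times[C_x] = C\cdot T_x = \#(C\cap T_x)\ge 0$, with equality iff $C_x = 0$.

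The main obstacle, and the step requiring the most care, is step (3): correctly pinning down the sign and making the identification $C\cdot T_x = a'(x)\times[C_x]$ rigorous. This involves (a) checking that the orientation convention on $C_x$ (the ``opposite of outer-normal-first'' convention inherited from \cite[Sec.~3.4]{hutchings2005periodic}) is exactly the one that makes the cross product come out nonnegative rather than nonpositive; (b) handling the non-generic values of $x$ at which $a'(x)$ is proportional to a primitive integer vector, where $T_x$ is an honest finite union of trivial cylinders over the Reeb orbits $e_x$, $h_x$ (after the Morse--Bott perturbation) rather than a smooth family — here one must argue that the inequality (\ref{positivity}) for the perturbed contact form, together with continuity/convergence as the perturbation parameter tends to $0$, yields the statement, or alternatively compute directly using the perturbed trivial cylinders; and (c) ensuring compactness of $C_x$ uniformly so that the intersection number is genuinely finite and homologically well-defined. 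A secondary technical point is verifying that $C$ being somewhere injective (``$C\in\mathcal{M}(\alpha,\beta)$'', irreducible) is what licenses the strict positivity in step (2); if $C$ had a trivial-cylinder component equal to $T_x$ the statement would degenerate, but since $\alpha,\beta$ do not contain $e_\pm$ and the relevant curves in $\mathcal M(\alpha,\beta)$ are the ones arising in the differential, this case does not occur for the slices under consideration. Once the sign is fixed and the non-generic slices are dispatched by a limiting argument, the rest is a direct application of intersection positivity.
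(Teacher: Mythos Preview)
Your approach via intersection positivity is a genuine alternative to the paper's argument, but as written it contains a dimension error that needs to be fixed. You set $T_x=\mathbb{R}\times(\{x\}\times\To^2)$, which is a \emph{three}-dimensional submanifold of the four-dimensional symplectization; intersection positivity for $J$-holomorphic curves applies to pairs of \emph{surfaces}, so $C\cap T_x$ is the one-dimensional slice $C_x$, not a finite set of points, and the phrase ``local intersection number of $C$ with $T_x$ at a point of $C\cap T_x$'' in your step (2) is not meaningful. The correct object to intersect $C$ with is a single trivial cylinder $\mathbb{R}\times\gamma$ over one Reeb orbit $\gamma$ in the Morse--Bott circle at $x$ (so two-dimensional), and then $C\cdot(\mathbb{R}\times\gamma)$ is indeed a finite nonnegative count. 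With this fix your step (3) becomes the identification of $C\cdot(\mathbb{R}\times\gamma)$ with the algebraic intersection $[C_x]\cdot[\gamma]$ in $H_1(\To^2)$, which is proportional to $a'(x)\times[C_x]$. Note however that with only one $\gamma$ the equality case gives you merely that $[C_x]$ is parallel to $[\gamma]$, not that $[C_x]=0$; you would still need a second independent direction (e.g.\ a nearby slice with different slope) to force $[C_x]=0$.

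The paper takes a different and shorter route: it defines $\phi(x)=\int_{C\cap(\mathbb{R}\times[x_0,x]\times\To^2)}\dd\lambda_a$, observes that $\dd\lambda_a$ restricts to a nonnegative form on any $J$-holomorphic curve so $\phi$ is nondecreasing, and computes by Stokes that $\phi'(x)=a'(x)\times[C_{x_0}]$; taking $x\to x_0$ yields the inequality. For the equality case the paper uses that $\phi'\ge 0$ everywhere, so $\phi'(x_0)=0$ forces $\phi''(x_0)=a''(x_0)\times[C_{x_0}]=0$ as well; strict concavity of $a$ makes $a'(x_0),a''(x_0)$ linearly independent, whence $[C_{x_0}]=0$. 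This Stokes/energy argument has the advantage of working uniformly for every $x$ (no rational-slope hypothesis, no limiting argument) and of delivering the equality case directly from the geometry of $a$, whereas your route --- once the dimension issue is repaired --- buys a more explicit geometric picture (actual disjointness from a trivial cylinder) at the cost of extra bookkeeping for irrational slopes and for the equality case.
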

Here, $(a,b)\times (c,d)$, where $(a,b),(c,d)\in \mathbb{R}^2$, is defined to be the quantity $ad-bc$.
\begin{proof}
    Define the map 
    \begin{align*}
        \phi:[0,1]&\rightarrow \mathbb{R} \\
         x&\longmapsto \displaystyle\int_{C'\cap \mathbb{R}\times [x_0,x]\times \To^2} \dd \lambda_a
    \end{align*}
    Notice that by properties of $J$-holomorphic curves the function $\phi$ is always non-negative. By Stokes' theorem we have
    \begin{align*}
        \phi(x)=&\displaystyle\int_{C\cap [0,x]} \dd \lambda_a \\
               =&\displaystyle\int_{C_x}\lambda_a-\displaystyle\int_{C_{x_0}}\lambda_a\\
               =&\langle\lambda_a|_{x}-\lambda_a|_{x_0},[C_{x_0}]\rangle
    \end{align*}
    By taking the derivative of $\phi$ we found that 
    $$\phi'(x)=a'(x)\times [C_{x_0}]$$
    By continuity we can take $x\rightarrow x_0$ to obtain the result. 
    
    Now we prove that the equality in the inequality \eqref{positivity} implies that $C_x=\emptyset$. Suppose that $\phi'(x)=0$, since $\phi$ is positive we must have a local minimum at $x$ which implies that $\phi'(x)=0$, but 
    $$\phi''(x)=a''(x)\times [C_{x_0}]$$
    since $a''(x)$ never vanish and we have $\phi'(x)=\phi''(x)=0$ we can conclude that $[C_{x_0}]=0$.    
\end{proof}

Now we would like to have a more explicit version of $[C_{x_0}]$. Given a pair of orbit sets $\alpha=\{(\alpha_i,m_i)\}$ and $\beta=\{(\beta_j,n_j)\}$ and suppose that there are not special orbits, that is, orbits at the axis. Remember that for each $\alpha_i$ there exist unique $x_i^\alpha\in [0,1]$ such that $\alpha_i$ appear in $\pi(a(x)\times \To^2)$, similarly, for each $\beta_j$ there is a unique $x_i^\alpha\in [0,1]$ such that $\beta_j$ appear in $\pi(a(x)\times \To^2)$. Write the homology of these orbit sets as $[\alpha_i]=(v_i^\alpha,w_i^\alpha)\in \mathbb{Z}^2$ and $[\beta_j]=(v_j^\beta,w_j^\beta)\in \mathbb{Z}^2$. Organize $\alpha$ and $\beta$ in such a way that $x_1^\alpha<\dots< x_M^\alpha$ and $x_1^\beta<\dots< x_N^\beta$ where $M=\sum_i m_i$ and $N=\sum_j n_j$. Define the \textit{slice class} $\sigma_{\alpha,\beta}:[0,1]\rightarrow \mathbb{Z}^2$ as:

\begin{equation}\sigma_{\alpha,\beta}(x)= -\displaystyle\sum_{x_i^\alpha<x}m_i(v_i^\alpha,v_i^\alpha)+ \displaystyle\sum_{x_i^\beta<x} n_j(v_j^\beta,v_j^\beta)+(-c_\alpha+c_\beta)(n,m)    
\end{equation}

Similar to \cite[lem 5.8]{cristofaro2020proof}  we can prove that $[C_x]=\sigma_{\alpha,\beta}(x)$. Which gives a completely combinatorial interpretation to the equation \eqref{positivity}.

\subsection{Paths can not cross}
\label{Pathcannotcross}

An interesting consequence of positivity is that forced a certain order relation over the generators. More precisely we have the following lemma

\begin{lemm}
    If there exist a $J$-holomorphic curve $C$ from $\alpha$ to $\beta$, then $P_\beta$ is never above $P_\alpha$.
\end{lemm}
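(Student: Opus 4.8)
The statement to prove is that if there is a $J$-holomorphic curve $C\in\mathcal{M}(\alpha,\beta)$, then the concave path $P_\beta$ is never strictly above $P_\alpha$. The natural strategy is to translate "$P_\beta$ above $P_\alpha$" into a statement about the convex regions $R_\alpha,R_\beta$ that the paths cut off with the axes, and then to contradict it using the positivity inequality \eqref{positivity} together with the combinatorial description of the slice class $\sigma_{\alpha,\beta}$. Recall that $P_\beta$ lies above $P_\alpha$ (weakly or strictly somewhere) exactly when $R_\alpha\subseteq R_\beta$ as convex subsets of $V_{n,m}$; so to prove the lemma I want to show $R_\alpha\not\subsetneq R_\beta$, i.e. there exists some $x\in(0,1)$ at which the boundary of $R_\beta$ lies strictly above that of $R_\alpha$ would force a contradiction. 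Actually the cleanest route: assume for contradiction that $P_\beta$ is somewhere strictly above $P_\alpha$ and nowhere below it; then compare the slices.

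First I would set up coordinates. For $x\in(0,1)$ the slice class of $C$ is $[C_x]=\sigma_{\alpha,\beta}(x)$ by the combinatorial identification at the end of Section \ref{Positivity}. Writing $A_\alpha(x)=-\sum_{x_i^\alpha<x}m_i(v_i^\alpha,w_i^\alpha)$ and similarly $A_\beta(x)$, we have $\sigma_{\alpha,\beta}(x)=A_\alpha(x)-A_\beta(x)+(c_\beta-c_\alpha)(n,m)$ up to the sign conventions fixed there. The key geometric observation is that the partial sums $A_\alpha(x)$ trace out precisely the vertices of the path $P_\alpha$ read from the $(n,m)$-axis toward the $y$-axis (after the shift by $c_\alpha(n,m)$), and likewise for $\beta$; so for each $x$ the vector $\sigma_{\alpha,\beta}(x)$ is the difference of the two points where the horizontal "height-$x$ cut" (more precisely, the cut by the appropriate family of orbits) meets $P_\alpha$ and $P_\beta$. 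Then positivity says $a'(x)\times[C_x]\geq 0$ for every $x$, with equality iff $C_x=\emptyset$.

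The heart of the argument: suppose $P_\beta$ were strictly above $P_\alpha$ at some point. Because both are concave paths from the $(n,m)$-axis to the $y$-axis and $C$ connects $\alpha$ to $\beta$, their endpoints are constrained; I would track how $\sigma_{\alpha,\beta}(x)$ evolves. At an $x$ where $P_\beta$ sits strictly above $P_\alpha$, the vector $\sigma_{\alpha,\beta}(x)$ points "outward" from $R_\alpha$, and since $a'(x)$ is (by the concavity hypothesis $a\times a'>0$ and the orientation of the axes) the outward-ish normal direction, the cross product $a'(x)\times\sigma_{\alpha,\beta}(x)$ becomes negative — contradicting \eqref{positivity}. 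Slightly more carefully: one shows that if $P_\beta\ge P_\alpha$ everywhere with strict inequality somewhere, then there is an interval of $x$ on which $\sigma_{\alpha,\beta}(x)\ne 0$ and on which $a'(x)\times\sigma_{\alpha,\beta}(x)<0$; the monotone/concave structure of both paths guarantees the sign does not get a chance to flip back. I'd make this rigorous by looking at the "first" $x$ (smallest) at which the two paths separate and the "last" $x$ at which they rejoin, and computing the accumulated cross products over that window, using that each edge of $P_\alpha,P_\beta$ is a negative multiple of a primitive vector $(-p,q)$ with $a'(x)$ proportional to it at the corresponding $x$.

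**Main obstacle.** The delicate point is the bookkeeping that identifies $\sigma_{\alpha,\beta}(x)$ with the horizontal displacement between $P_\alpha$ and $P_\beta$ at parameter $x$, including getting every sign and the role of the Chern-class shift $(c_\beta-c_\alpha)(n,m)$ right — this is exactly the kind of place where the quotient by the lens-space relation and the choice of trivialization can introduce an off-by-$(n,m)$ error. I would handle it by first checking the identity in the model case where $\alpha,\beta$ differ by a single corner rounding (so only one or two edges change), where it reduces to the classical picture of \cite{hutchings2005periodic,cristofaro2020proof}, and then arguing additively in the relative homology class. Once the dictionary "slice class $\leftrightarrow$ horizontal gap between the paths" is pinned down, the positivity inequality \eqref{positivity} does the rest almost formally: a strictly positive gap at some $x$, combined with $a'(x)\times(\text{gap})\geq 0$ and the fixed orientation, is contradictory, so $P_\beta$ can never rise above $P_\alpha$.
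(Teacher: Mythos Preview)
Your overall strategy---assume $P_\beta$ goes above $P_\alpha$ somewhere, identify the slice class $[C_x]=\sigma_{\alpha,\beta}(x)$ combinatorially in terms of the two paths, and contradict \eqref{positivity}---is exactly the paper's. But two points keep your outline from being a proof.

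First, the contradiction hypothesis should be purely local: ``$P_\beta$ is strictly above $P_\alpha$ somewhere''. You initially add ``and nowhere below'', which is too strong; you then correctly retreat to ``find the first and last $x$ at which the paths separate/rejoin'', i.e.\ two crossing points $(a,b)$ and $(c,d)$ with $P_\beta$ strictly above $P_\alpha$ on the interval in between. That localisation is the right move and is what the paper does.

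Second---and this is the genuine gap---you never say \emph{which} $x_0$ you plug into \eqref{positivity}, and your justification that the cross product is negative (``$a'(x)$ is the outward-ish normal, so \dots'') is not an argument. The parameter $x\in(0,1)$ indexes slopes of edges via $a'(x)$, not horizontal position on the polygon; knowing only that $P_\beta$ lies above $P_\alpha$ on an interval does not by itself determine the sign of $a'(x)\times\sigma_{\alpha,\beta}(x)$ at an arbitrary $x$ in that interval. The paper pins down $x_0$ by a mean-value/secant trick: pick the slope $\tfrac{d-b}{c-a}$ of the chord between the two crossing points and take $x_0$ so that $a'(x_0)$ is parallel to that chord. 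Then the slice class $[C_{x_0-\epsilon}]$ is the difference of the two vertices $(p,q)\in P_\alpha$ and $(p',q')\in P_\beta$ where the adjacent edge-slopes straddle the chord slope, and the positivity inequality becomes $(q-q')+\tfrac{d-b}{c-a}(p'-p)\ge 0$, which says $(p,q)$ is at least as far from the chord as $(p',q')$---contradicting ``$P_\beta$ above $P_\alpha$''. Without this specific choice of $x_0$ (or an equivalent device), you do not have an inequality to violate.

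Your ``main obstacle'' is misdiagnosed: the $(c_\beta-c_\alpha)(n,m)$ shift is irrelevant on the interval between crossings because both paths start and end at the same points there, so the shift cancels. The actual crux is choosing the right $x_0$ and computing the slice class at it; once you do that, the argument is a two-line geometric inequality, not a bookkeeping chase.
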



\begin{proof}
    By contradiction suppose that $P_\beta$ goes above $P_\alpha$, there must exist two intersection points which we call $(a,b)$ and $(c,d)$, with $a<c$. Then on the interval $(a,c)$ the path $P_\beta$ is strictly above $P_\alpha$ except at end points where they overlap. Form the line connecting $(a,c)$ and $(b,d)$, we can find $x_0\in (a,c)$ such that $f'(x_0)=\frac{d-b}{c-a}$. We compute $[C_{x_0-\epsilon}]$ and apply the equation \eqref{positivity}. 

    Let the lattice point $(p,q)$ have the following property: it is a vertex on $P_\alpha$, the edge to the left of this lattice point has slope less than $f'(x_0)$, and the edge to the right of this vertex has slope greater than equal to $f'(x_0)$. Then the contribution to $[C_{x_0-\epsilon}]$ from $P_\alpha$ is simply $(-(B-q),-p)$ where $B$ is the horizontal distance. We also consider the contribution of $C_{x_0-\epsilon}$ from $P_\beta$, which takes the form $(B-q',p')$. The lattice point $(p',q')$ on $P_\beta$ is chosen the same way as $(p,q)$. If no such vertex exists, then $P_\beta$ must overlap with the line segment connecting $(a,b)$ and $(c,d)$. Then the point $(p',q')$ is still the lattice point on $P_\beta$ which corresponds to the left most end point of where $P_\beta$ overlaps with the line connecting $(a,b)$ to $(c,d)$. In either case the positivity says that 
    $$(q-q')+\displaystyle\frac{d-b}{c-a}(p'-p)\geq 0$$
    We first assume $(p',q')$ is not on the line connecting $(a,b)$ and $(c,d)$, then this means that the point $(p,q)$ is futher away from the line connecting $(a,b)$ to $(c,d)$ than $(p',q')$. Geometrically this is described by 
    $$(b-d)(p-p')+(c-a)(q-q')<0$$
    which is impossible. Now assume $(p',q')$ is on the line connecting $(a,b)$ to $(c,d)$, then since we have chosen $[C_{x_0-\epsilon}]$, we must have $p'<p$. The energy inequality implies 
    $$\displaystyle\frac{q-q'}{p-p'}>\displaystyle\frac{d-b}{c-a}$$
    contradicting the geometric picture. 
\end{proof}
\subsection{Curves correspond to Corounding}
Using the result from the former sections we can prove the `only if part' of the Proposition \ref{equicomgeo} part 3. Notice that this proof is similar to \cite[Lem.~ 5.10]{cristofaro2020proof}.
More precisely we will prove the following lemma.

\begin{lemm}
    Suppose that $\lambda_a^\epsilon$ is a good perturbation of $\lambda_a$. Let $\alpha$ and $\beta$ be admissible generators such that $I(\alpha,\beta)=1$. Then, for a generic admissible $J$ close to $J_\text{std}$, 
    $$\langle\partial \alpha,\beta\rangle=1$$
    only if $\alpha$ is obtained by corounding the corner. 
\end{lemm}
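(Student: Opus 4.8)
The goal is to show that if $\langle \partial\alpha,\beta\rangle = 1$ then $P_\beta$ is obtained from $P_\alpha$ by corounding the corner (equivalently, $P_\alpha$ is obtained from $P_\beta$ by rounding a corner and locally losing one $h$). The strategy is to extract combinatorial constraints from the existence of an ECH-index-$1$ $J$-holomorphic curve $C$ from $\alpha$ to $\beta$ and show these constraints force exactly the corner-rounding operation. First I would invoke Proposition \ref{BasedDiff}: since $I(\alpha,\beta)=1$, the current $\mathcal{C}$ contributing to $\langle\partial\alpha,\beta\rangle$ splits as $\mathcal{C}_0\sqcup C_1$ with $\mathcal{C}_0$ a union of trivial cylinders and $C_1$ an embedded curve with $\mathrm{ind}(C_1)=I(C_1)=1$. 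Then I would apply Lemma \ref{genuszero} (the genus-zero reduction referenced in the Fredholm index discussion) together with the combinatorial Fredholm index formula \eqref{FredInd}, $\mathrm{ind}(C)=-2+2g+2e_\beta+h+2c_1+2c_2$: setting $\mathrm{ind}(C_1)=1$, $g=0$ forces $2e_\beta + h + 2c_1 + 2c_2 = 3$, so $h$ is odd and in fact the only possibilities are $(h,e_\beta,c_1+c_2)=(1,1,0)$ or $(3,0,0)$. This already pins down that $C_1$ has exactly one or three $h$-labelled ends among its negative ends and no change in Chern class.

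Next I would use the two structural lemmas from Sections \ref{Positivity} and \ref{Pathcannotcross}. The "paths cannot cross" lemma says $P_\beta$ is never strictly above $P_\alpha$, so $R_\beta \subseteq R_\alpha$ as convex regions; combined with the fact that $\mathcal{L}_{n,m}$ decreases by exactly one (since $I(\alpha)-I(\beta)=1$ and $h$ changes by an odd amount with $2\mathcal{L}_{n,m}$ absorbing the rest, giving $\mathcal{L}_{n,m}(P_\alpha)-\mathcal{L}_{n,m}(P_\beta)+\tfrac12(h_\alpha-h_\beta)\cdot(\text{sign})=\ldots$ — I'd compute the exact bookkeeping), the region $R_\alpha\setminus R_\beta$ contains exactly one interior lattice point of $R_\alpha$ and no lattice points in its interior beyond that. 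A convexity argument (exactly as in \cite{hutchings2006rounding} and \cite[Lem.~5.10]{cristofaro2020proof}) then shows the only way one convex lattice region sits inside another, differing by a single lattice point, is by rounding a single corner: $P_\alpha$ has a vertex $P$ and $P_\beta$ replaces the two edges at $P$ by the sequence of primitive edges cutting off $P$ while passing exactly through the one extra lattice point. This is the geometric heart.

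The remaining task is the $h$-label bookkeeping: verifying that the corner rounding is accompanied by "locally losing one $h$" in the precise sense of Definition \ref{corounding}. Here I would argue as follows: the trivial-cylinder part $\mathcal{C}_0$ accounts for all edges of $P_\alpha$ not incident to the rounded corner, and these keep their labels unchanged, so only the local picture at the corner matters. At the corner of $P_\alpha$ the two incident edges carry some number $k\in\{0,1,2\}$ of $h$'s; the curve $C_1$ has its positive ends at the (at most two) Reeb orbits at the corner and negative ends at the new edges of $P_\beta$. The ECH partition conditions (the subsection on $P_\gamma^\pm$) constrain which multiplicities and labels can occur at the ends of an embedded curve, and the index count $\mathrm{ind}(C_1)=1$ together with $h$ odd forces $k>0$ (so $k=1$ or $2$) and exactly $k-1$ of the new edges labelled $h$; the key input is the standard fact (used in every rounding-the-corner argument, e.g.\ \cite{hutchings2006rounding,hutchings2005periodic}) that an index-$1$ curve near a corner is a "pair of pants"-type configuration whose asymptotics are rigid. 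I expect \textbf{this last step — matching the $h$-labels via the partition conditions and the index-$1$ rigidity of the local model — to be the main obstacle}, since it requires carefully translating the analytic ECH partition conditions into the polygonal-path language and checking no exotic label distribution survives; the purely geometric "one convex region inside another" part is routine given the two lemmas already proved. Finally I would note that genericity of $J$ close to $J_{\mathrm{std}}$ is used only to guarantee the index/Fredholm-index coincidence and transversality invoked from Proposition \ref{BasedDiff}, so no new analytic input is needed beyond what is already set up.
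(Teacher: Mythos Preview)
Your overall architecture is close to the paper's, but there are two genuine gaps and one methodological difference worth flagging.

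\textbf{Gap 1: the degenerate case $P_\alpha=P_\beta$ as unlabeled paths.} Your lattice-point argument presupposes that $R_\alpha\setminus R_\beta$ is nonempty, but nothing you wrote rules out the case where the two paths coincide and differ only in a single label ($h_\alpha-h_\beta=1$, $\mathcal{L}$ unchanged). In that case the embedded component $C_1$ would be a cylinder from an elliptic orbit to the corresponding hyperbolic orbit. The paper handles this by observing that such cylinders arise from the Morse--Bott perturbation and come in cancelling pairs (citing \cite[Lem.~3.14]{hutchings2005periodic}), so their mod~$2$ count vanishes and they never contribute to $\langle\partial\alpha,\beta\rangle$. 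This is an analytic input you cannot extract from index formulas or positivity alone.

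\textbf{Gap 2: your Fredholm enumeration is incomplete.} From $3=2e_\beta+h+2(c_1+c_2)$ you listed only $(h,e_\beta,c_1{+}c_2)=(1,1,0)$ and $(3,0,0)$, but $(1,0,1)$ is also a solution. Geometrically this corresponds to rounding a corner that sits at an endpoint of $P_\beta$ on one of the axes (so one of the relative Chern numbers jumps). These are legitimate coroundings and must be included, not excluded.

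\textbf{Methodological difference on the $h$-labels.} You propose to pin down the label bookkeeping via the ECH partition conditions. The paper does not do this: instead it invokes a generic choice of $J$ (citing \cite[Lem.~A.1]{hutchings2005periodic}) to rule out \emph{double rounding} a priori, and then the remaining case $m_\beta=2$ is checked directly to match the ``locally losing one $h$'' rule. Your partition-condition route may be workable, but you correctly identified it as the main obstacle, and the paper sidesteps it entirely with this genericity trick. Also, rather than a single ``convex region inside another'' argument, the paper decomposes the region between $P_\alpha$ and $P_\beta$ into trivial and non-trivial subregions, uses the Fredholm index to bound the number of non-trivial ones by one, and then uses the slice class (positivity) directly to show no trivial subregion can be adjacent to the non-trivial one; this is more hands-on than the lattice-convexity lemma you gestured at.
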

\begin{proof}
    Suppose that 
    $$\langle\partial \alpha,\beta\rangle=1$$
    for some generically chosen $J$. We first choose $J$ generically to rule out double rounding, which we can do by the argument \cite[Lem.~A.1]{hutchings2005periodic}. 
    By positivity we now that $P_\alpha$ is above $P_\beta$. Consider the region between $P_\alpha$ and $P_\beta$. We can take this region and decompose it into two kinds of subregions: \textit{non-trivial} subregions where $P_\alpha$ is above $P_\beta$- meaning that the parts of $P_\alpha$ and $P_\beta$ intersect at most at two points in these regions, and, \textit{trivial} subregions where the concave paths (without the labels) coincide. 
    
    We begin by showing that there is at least one non-trivial region. Let suppose that this is not the case, this implies that $P_\alpha$ and $P_\beta$ coincide as \textit{unlabeled concave paths}. Let $C$ be the unique embedded component of a given $J$-holomorphic curve from $\alpha$ to $\beta$. From the Fredholm index it is easy to deduce that $\alpha$ is an elliptic orbit and $\beta$ is the corresponding hyperbolic orbit. Then it is possible to prove that these $J$-holomorphic to $J$-holomorphic curves obtain from the Morse-Bott perturbation as explain in \cite[Lem.~3.14]{hutchings2005periodic}. These $J$-holomorphic curves appear in pairs so their mod $2$ count vanishes. We conclude that there is at least one non-trivial region. 

    To symplify the proof, notice that it is enough to suppose that $C$ is irreducible. Indeed, if $C$ is not irreducible, consider its embedded component $C'$. Then it follows from Proposition \ref{BasedDiff} that there exist generators $\alpha'$ and $\beta'$ such that $P_\beta$ is obtained by corounding the corner from $P_\alpha$ if and only if $P_\beta'$ is obtained by corounding the corner from $P_\alpha'$. So we suppose from now on that $C$ is irreducible. 

    Since we already prove that there is at least one non-trivial region, we want two prove that under the assumption that $C$ is irreducibe, the polygonal paths $P_\alpha$ and $P_\beta$ form exactly one non-trivial region. From the Fredholm index we can deduce that there is at most one non-trivial region. Since we already prove that there is at least one non-trivial region we conclude that there is exactly one non-trivial region. 

    Now we argue that the number of trivial region is zero. First we want to argue that the region between $P_\alpha$ and $P_\beta$ cotain no interior points. In fact this follow from the equation $I(\alpha,\beta)=I(\alpha)-I(\beta)=2i_{\alpha,\beta}+2e_\beta+h+2(c_1+c_2)$ were $i_{\alpha,\beta}$ is the number of points in the region between $P_\alpha$ and $P_\beta$ that we can deduce from the Equation \eqref{echinccon}. Each case force us to conclude that $i_{\alpha,\beta}=0$. 

    Since each trivial part of the region between $P_\alpha$ and $P_\beta$ contribute to the Fredholm index we deduce that there is at most one trivial reigion. Let suppose that this trivial region is at the right of the non-trivial region, the other case is similar. Let $v_{p,q}$ be the vector that represent this trivial region. Notice that by the concavity, the edge in $P_\alpha$ inmediately to the left of $v_{p,q}$ correspond to a vector $v_{p',q'}$ such that $q/p<q'/p'$. Then we can use the \textit{slice class} of $C$ to find a contradiction. 

    To finish the proof it is enough to check the part corresponding to \textit{locally losing an h} also hold. Write $3=2e_\beta+h+2(c_1+c_2)$. Let $m_\beta$ be the total multiplicity of $\beta$. Notice that $2\leq m_\beta\leq 3$. It is easy to check that $m_\beta=2$ correspond exactly to the decoration define for coroounding the corner, While $m_\beta=3$ correspond to double rounding which we have chosen a perturbation that rule this possibility out. 

    This finish the proof. 
    \end{proof}

\subsection{Corounding correspond to curves}
\label{Coroundingcorrespondtocurves}
In this section we prove that the operation of corounding the corner over a pair of polygonal paths corresponds to a $J$-holomorphic curve between the corresponding Reeb orbit sets. 

We begin by proving that there are no J-holomorphic curves with genus bigger than zero in our case of interest.  
\begin{lemm}
\label{genuszero}
    Let $\alpha$ and $\beta$ be Reeb orbit sets with $I(\alpha,\beta)=1$ and no special orbits. Suppose that $C\in \mathcal{M}_1(\alpha,\beta)$ is irreducible then $C$ has genus zero. 
\end{lemm}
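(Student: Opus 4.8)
The statement to prove is Lemma \ref{genuszero}: if $\alpha,\beta$ are Reeb orbit sets with $I(\alpha,\beta)=1$, no special orbits, and $C\in\mathcal{M}_1(\alpha,\beta)$ is irreducible, then $C$ has genus zero. The plan is to combine the Fredholm index formula \eqref{FredInd} with the positivity restrictions from Section \ref{Positivity} and the ``paths cannot cross'' lemma from Section \ref{Pathcannotcross}, in the following order.

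First I would recall that since $C$ is irreducible, $\operatorname{ind}(C)=1$ by part 2 of Proposition \ref{BasedDiff} (the $I=1$ current splits as trivial cylinders plus one embedded component of Fredholm and ECH index $1$). Plugging $\operatorname{ind}(C)=1$ into the combinatorial Fredholm index \eqref{FredInd},
$$1=-2+2g+2e_\beta+h+2c_1+2c_2,$$
so that $2g = 3 - 2e_\beta - h - 2c_1 - 2c_2$. All of $g$, $e_\beta$, $h$, $c_1$, $c_2$ are nonnegative integers (here $c_1,c_2\geq 0$ because $P_\beta$ lies weakly below $P_\alpha$ by the ``paths cannot cross'' lemma, so the auxiliary paths enclose a region whose relevant Chern differences are nonnegative — this monotonicity needs to be spelled out using the slice-class description of Section \ref{Positivity}). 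Hence $2g\leq 3$, forcing $g\in\{0,1\}$; I must rule out $g=1$. If $g=1$ then $2g=2$, so $3-2e_\beta-h-2c_1-2c_2=2$, i.e. $2e_\beta+h+2c_1+2c_2=1$, which forces $e_\beta=c_1=c_2=0$ and $h=1$. So the only potential genus-one obstruction is the configuration with a single $h$ label, no elliptic negative ends, and no change in Chern class; I would then show this configuration is geometrically impossible.

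To kill that case I would argue as follows. With $c_1=c_2=0$ the region enclosed between $P_\alpha$ and $P_\beta$ has zero ``width'' at the two axes, and with $h=1$, $e_\beta=0$ the negative orbit set $\beta$ has total multiplicity at most what a single hyperbolic end allows; combined with the ECH index relation $I(\alpha,\beta)=2i_{\alpha,\beta}+2e_\beta+h+2(c_1+c_2)=1$ we get $i_{\alpha,\beta}=0$, so the region between $P_\alpha$ and $P_\beta$ contains no interior lattice points and no boundary-changing data beyond one $h$. This is exactly the local picture of a single corner rounding (as in the ``Curves correspond to Corounding'' section), and for such a configuration the relative adjunction / writhe bound for the embedded curve $C$ forces $g=0$: writing the adjunction formula $2g(C)-2+\#\text{ends} = \ldots$ in terms of $w_\tau$, $Q_\tau$ and the partition data, the combinatorics of a single rounded corner (with the Morse--Bott partition conditions $\text{CZ}_\tau(e_{p,q})=-1$, $\text{CZ}_\tau(h_{p,q})=0$) leaves no room for positive genus. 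Alternatively — and this is probably cleaner — I would invoke intersection positivity directly: an irreducible genus-one curve in such a thin ECH-index-one region would have to intersect a trivial cylinder or self-intersect, contradicting the writhe computation $w_\tau(S)=0$ used in the proof of Lemma \ref{explicitindex}.

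The main obstacle I anticipate is making the $g=1$ elimination fully rigorous rather than merely ``index-theoretically plausible.'' The inequality $2g\leq 3$ from \eqref{FredInd} is immediate, but excluding the single surviving case $(g,e_\beta,h,c_1,c_2)=(1,0,1,0,0)$ requires genuinely using that $C$ is an honest $J$-holomorphic curve — either via the relative adjunction inequality of Hutchings (relating $g$, $w_\tau$, and the ECH partition conditions) together with the writhe vanishing established in the surface construction of Lemma \ref{explicitindex}, or via a direct positivity-of-intersections argument à la \cite[Lem.~5.10]{cristofaro2020proof}. I would lean on the adjunction route: since the embedded component has $\operatorname{ind}=I=1$ and the partition conditions at its ends are the ``minimal'' ones dictated by the small negative rotation numbers, the adjunction formula pins down $-\chi(C)$ exactly, and comparing with $\chi(C)=2-2g-(\text{number of ends})$ yields $g=0$ outright, with no case analysis left over.
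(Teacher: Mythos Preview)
Your Fredholm-index reduction is correct and matches the paper: from \eqref{FredInd} with $\text{ind}(C)=1$ one gets $3=2g+2e_\beta+h+2c_1+2c_2$, and the only case left to exclude is $(g,e_\beta,h,c_1,c_2)=(1,0,1,0,0)$. (The paper obtains $c_1,c_2\ge 0$ from intersection positivity of $C$ with the trivial cylinders over the special orbits $e_\pm$, which is cleaner than your path-monotonicity sketch.)

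The gap is in how you dispose of that last case. Your adjunction route is not actually carried out, and one of the proposed ingredients is wrong: the vanishing $w_\tau(S)=0$ in Lemma \ref{explicitindex} refers to the hand-built representative $S$ of the relative homology class, not to the $J$-holomorphic curve $C$, so it says nothing about the writhe or genus of $C$. The ``this is exactly the local picture of a single corner rounding'' remark is also circular, since coroundings are by definition the genus-zero picture you are trying to establish.

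The paper's argument for this case is purely combinatorial and avoids adjunction entirely. Split on where the single hyperbolic end sits. If $h_\beta=0$, then since also $e_\beta=0$ one has $\beta=\emptyset$; but for $\alpha\ne\emptyset$ the path $P_\alpha$ meets both axes at positive points, forcing the Chern differences $c_1+c_2>0$, a contradiction. If $h_\beta=1$ (hence $h_\alpha=0$), then $\beta$ consists of a single hyperbolic orbit and $P_\beta$ is a single straight segment; with $c_1=c_2=0$ the two paths share endpoints, and now concavity of $P_\alpha$ (which places it on or below its chord $P_\beta$) together with the no-crossing lemma of Section \ref{Pathcannotcross} (which places $P_\beta$ on or below $P_\alpha$) forces $P_\alpha=P_\beta$ as unlabeled paths. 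But then $I(\alpha,\beta)=h_\alpha-h_\beta=-1\ne 1$. This simple case-split on $h_\alpha$ versus $h_\beta$ is the missing idea you should use in place of the adjunction sketch.
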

\begin{proof}
We prove this by contradiction. Suppose that $g>0$. 

From the Fredholm equation \ref{FredInd} we deduce that 
$$3=2g+2e_\beta+h+2 c_1+2 c_2$$
From the intersection of $C$ with the special orbits we deduce that $c_1$ and $c_2$ are positive numbers. Analizing the  equation we only can have  $c_1+c_2=1$ or $c_1=c_2=0$.

If $c_1=1$ than $\beta=\emptyset$ this implies that $I(\alpha)=1$. Therefore $I(\alpha)=2\mathcal{L}_{n,m}(\alpha)+h_\alpha=1$ but this equation contradicts that $g>1$. The case $c_2=1$ is similar. 

Now we can suppose that $c_1=c_2=0$. To prove this we use the no-crossing property of the previous section. We show that there cannot be a genus one curve satisfying the assumtions of the previous step. The Fredholm index formula tell us that 
$$1=h+2e_\beta$$
which means $e_\beta=0$ and $h_\alpha+h_\beta=1$. If $h_\alpha=1$ and $h_\beta=0$ then $\alpha_\beta$. By inspection $C$ cannot have $\text{ECH}$ index one. 
On the other hand, if $h_\alpha=0$ and $h_\beta=1$ then $P_\beta$ consists of a single line segment. By positivity and the fact that $P_\alpha$ has the same initial and ending points of $P_\beta$, we only can have that $P_\alpha=P_\beta$ as paths. It is easy to see that the ECH index can not be one. 
\end{proof}

The next important part of the proof is to show that irreducible curves correspond to non-trivial regions. 

\begin{lemm}
    \label{trivcil}
    Suppose that $\alpha$ and $\beta$ are admissible orbits such that $I(\alpha,\beta)=1$ and no special orbits. Write $\alpha=\gamma_1\alpha'\gamma_2$ and $\beta=\gamma_1\beta'\gamma_2$ such that $\alpha'$ and $\beta'$ does not have generators in common. Then there is a bijection 
    $$\mathcal{M}_1(\alpha,\beta)\cong \mathcal{M}_1(\alpha',\beta')$$
    given by attaching trivial cylinder to $\gamma_1$ and $\gamma_2$.    
\end{lemm}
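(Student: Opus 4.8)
The plan is to construct the bijection $\mathcal{M}_1(\alpha,\beta)\cong \mathcal{M}_1(\alpha',\beta')$ explicitly by adding and subtracting trivial cylinders over the common orbits $\gamma_1$ and $\gamma_2$, and then check that this operation preserves the key numerical invariants, so that it restricts to a bijection on the index-$1$ parts of the moduli spaces. First I would set up the map in the direction $\mathcal{M}_1(\alpha',\beta')\to\mathcal{M}_1(\alpha,\beta)$: given $C'\in\mathcal{M}_1(\alpha',\beta')$, define $C$ to be the disjoint union of $C'$ with the trivial cylinders $\mathbb{R}\times\gamma_1$ and $\mathbb{R}\times\gamma_2$ (with the appropriate multiplicities, which are the same on the $\alpha$ and $\beta$ sides since $\gamma_1,\gamma_2$ are the common parts). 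One must check $C$ is still a $J$-holomorphic current from $\alpha$ to $\beta$, which is immediate, and that it lies in $\mathcal{M}_1$; for this I would use the additivity of the ECH index (Proposition \ref{IndexECHproper}(b)) together with Proposition \ref{BasedDiff}/\eqref{intcyl}, plus the fact that $C'$ and the trivial cylinders are disjoint. The disjointness is where positivity of intersection and the combinatorial picture enters: since $\gamma_1$ and $\gamma_2$ sit at values $x$ in $[0,1]$ that are strictly smaller (resp.\ larger) than all the $x_i$ occurring in $\alpha'$, $\beta'$, and a nontrivial curve $C'$ has $[C'_x]\ne 0$ only in that intermediate range by the positivity lemma and the slice-class description $[C'_x]=\sigma_{\alpha',\beta'}(x)$, the current $C'$ is disjoint from $\mathbb{R}\times\gamma_1$ and $\mathbb{R}\times\gamma_2$. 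Hence $I(C)=I(C')=1$ by \eqref{intcyl} with equality, so $C\in\mathcal{M}_1(\alpha,\beta)$.

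Next I would build the inverse. Given $C\in\mathcal{M}_1(\alpha,\beta)$, Proposition \ref{BasedDiff} part 2 says $C=\mathcal{C}_0\sqcup C_1$ with $I(\mathcal{C}_0)=0$ (so $\mathcal{C}_0$ is a union of trivial cylinders with multiplicities) and $\mathrm{ind}(C_1)=I(C_1)=1$. I claim the trivial-cylinder part $\mathcal{C}_0$ is precisely $\mathbb{R}\times\gamma_1$ and $\mathbb{R}\times\gamma_2$ with their multiplicities, and that $C_1$ has no ends at $\gamma_1$ or $\gamma_2$. This is the main point to argue carefully: a trivial cylinder over an orbit at an intermediate $x_i$ would force a common generator in $\alpha'$ and $\beta'$, contradicting the hypothesis that $\alpha'$ and $\beta'$ share none; and the ECH partition conditions together with the index relation rule out $C_1$ having ends along the fixed common orbits $\gamma_1,\gamma_2$ simultaneously on top and bottom in a way that would leave leftover trivial cylinders — here I would invoke the partition-condition discussion and the Fredholm index formula \eqref{FredInd}, noting $c_1=c_2=0$ for curves between $\alpha$ and $\beta$ when $\alpha$ and $\beta$ have the same homology and no special orbits. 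Then set $C'=C_1$, regarded as a current from $\alpha'$ to $\beta'$; additivity of $I$ gives $I(C')=I(C)-I(\mathcal{C}_0)=1$, and the two constructions are mutually inverse essentially by definition.

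Finally I would record that this bijection is compatible with the gluing/transversality setup so that it really is a bijection of the (compactified, or cut-down) moduli spaces used to define the differential — this is routine and follows the same lines as \cite[Lem.~5.10]{cristofaro2020proof}, which I would cite.

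The hard part will be the inverse direction: showing that the trivial-cylinder component of an arbitrary $C\in\mathcal{M}_1(\alpha,\beta)$ is exactly the cylinders over $\gamma_1$ and $\gamma_2$ and nothing more, and that no component of $C_1$ is asymptotic to $\gamma_1$ or $\gamma_2$. This requires combining the index bookkeeping (additivity of $I$, \eqref{intcyl}), the ECH partition conditions at $\gamma_1,\gamma_2$, the positivity/slice-class lemma of Section \ref{Positivity}, and the no-crossing lemma of Section \ref{Pathcannotcross}; all the other steps are formal.
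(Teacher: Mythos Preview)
Your overall strategy matches the paper's: attach trivial cylinders in one direction and strip them off in the other, controlling everything with the ECH/Fredholm index. Two points deserve comment.

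For the forward map, your disjointness argument via the slice class is more delicate than needed. The paper simply observes that $I(\alpha,\beta)=I(\alpha',\beta')$ (since $H_2(Y)=0$, the ECH index depends only on the orbit sets, and adding a trivial cylinder on both ends contributes nothing), and then \eqref{intcyl} gives $I(C'\cup T)\ge I(C')+2\#(C'\cap T)$ with the left and right indices equal, forcing $\#(C'\cap T)=0$. This bypasses the slice-class computation entirely.

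The real gap is in your surjectivity argument. You write that ``$c_1=c_2=0$ for curves between $\alpha$ and $\beta$ when $\alpha$ and $\beta$ have the same homology and no special orbits,'' and then feed this into the Fredholm formula \eqref{FredInd}. That statement is true for the \emph{full} current $C$, but \eqref{FredInd} is being applied to the embedded nontrivial component $C_1$, whose positive and negative ends are only \emph{subsets} of $\alpha$ and $\beta$. For $C_1$ the relative Chern numbers need not vanish: the paper has to treat the case $c_1+c_2=1$ separately, and there the contradiction is obtained differently (if $C_1$ has both a positive and a negative end at some $\rho\in\gamma_1\cup\gamma_2$, a direct count in \eqref{FredInd} forces $\mathrm{ind}(C_1)<1$). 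Your appeal to the ECH partition conditions does not substitute for this: the partition conditions constrain how multiplicities split at an end, but they do not by themselves prevent $C_1$ from being asymptotic to an orbit of $\gamma_1\cup\gamma_2$ on both sides. To close the argument you need the genus-zero input of Lemma~\ref{genuszero} and then a short case analysis on $c_1+c_2\in\{0,1\}$, using positivity (Section~\ref{Positivity}) in the first case to reach a contradiction.
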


\begin{proof}
    Notice that $I(\alpha,\beta)=I(\alpha',\beta')$. From intersection positivity and inequality \ref{intcyl} it follows that if $I(C')=I(C'\cup T)$ then $C'\cap T=\emptyset$. From this it follows that the map $\mathcal{M}(\alpha',\beta')\rightarrow\mathcal{M}(\alpha,\beta)$ it is well-defined and clearly injective. 

    To prove that this map is surjective we argue by contradiction. Suppose that $\mathcal{C}\in\mathcal{M}(\alpha,\beta)$ is such that does not contain trivial cylinders over all $\gamma_1\cup\gamma_2$. Let $C'$ denote the non-trivial component of $C$. Then $C'$ has both a positive end and a negative end at some orbit $\rho$ of $\gamma_1\cup\gamma_2$. 
    Notice that by Lemma \ref{genuszero} we can supose $g=0$ in Equation \ref{FredInd}. Lets suppose first that both relative Chern classes in Equation \ref{FredInd} are equal to zero. In this case, we can conclude that $\beta'$ is exactly one hyperbolic Reeb orbit and $\alpha'$ consist uniquely of elliptic orbits. This contradicts positivity.
    Now suppose that exactly one of the relative Chern classes in Equation \ref{FredInd} is equal to one. No matter if $\rho$ is hyperbolic or elliptic, since it is in both ends we conclude that $\text{ind}(C')=-1$ which is a contradiction. This proves surjectivity. 
\end{proof}

With these two lemmas at hand we can finally prove the following proposition. 

\begin{pro}
    Let $\alpha$ and $\beta$ admissible generators without special orbits such that $P_\beta$ is obtained from $P_\alpha$ by corounding the corner. Then for a generic almost complex structure $\#\mathcal{M}_1(\alpha,\beta)/\mathbb{R}=1$.
\end{pro}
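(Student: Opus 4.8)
The plan is to reduce, via Lemma \ref{trivcil}, to the case where the nontrivial region between $P_\alpha$ and $P_\beta$ involves no shared edges, and then to build the required $J$-holomorphic curve explicitly near the standard integrable complex structure $J_{\text{std}}$, counting its contribution with an automatic transversality / obstruction-bundle argument. First I would set up the local picture: by the definition of corounding in Section \ref{corounding}, passing from $P_\alpha$ to $P_\beta$ replaces one corner of $P_\beta$ by a new edge (so $R_\alpha$ is $R_\beta$ with a corner removed), with the $h$-bookkeeping constraint that the number of $h$-labels on the new edge(s) is one less than the number on the two edges meeting at the removed corner. Stripping off trivial cylinders over the common orbits $\gamma_1,\gamma_2$ using Lemma \ref{trivcil}, we may assume $\alpha=\alpha'$ and $\beta=\beta'$ share no generators, so $P_\alpha$ and $P_\beta$ form a single nontrivial region with no interior lattice points (as computed from \eqref{echinccon} and the Fredholm formula \eqref{FredInd}, exactly as in the proof that curves correspond to corounding). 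By Lemma \ref{genuszero} any irreducible $C\in\mathcal M_1(\alpha,\beta)$ has genus zero, and $c_1=c_2=0$.

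Next I would construct the curve. The model is the ``rounding a corner'' curve of Hutchings--Sullivan \cite{hutchings2005periodic} and the local models in \cite{cristofaro2020proof, hutchings2006rounding}: near the removed corner one has a multiply-punctured sphere in $\mathbb R\times Y$ whose positive ends are at the orbits of $\alpha$ making up the two edges into the corner and whose negative ends are at the orbits of $\beta$ on the new edge. In the toric model $\mathbb R\times I\times\To^2$ (before collapsing to the lens space) such curves are explicit: they are the branched covers / amoeba-type holomorphic maps whose image projects to the region between the two polygonal paths, exactly the genus-zero curves produced by successive negative surgeries in Step 2 of the proof of Lemma \ref{explicitindex}. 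One checks the asymptotics match the $h$/$e$ decorations: the Morse--Bott perturbation of Section \ref{Morse-Bott} with a small negative rotation fixes $\mathrm{CZ}_\tau(e_{p,q})=-1$, $\mathrm{CZ}_\tau(h_{p,q})=0$, and the ECH partition conditions then force the covering multiplicities at the ends to be precisely those prescribed by the edges of $P_\alpha$ and $P_\beta$; the ``locally losing an $h$'' condition is exactly the statement that the total number of hyperbolic ends drops by one, which is what makes $\mathrm{ind}(C)=I(C)=1$ by \eqref{FredInd}.

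Then I would count. Because $\mathrm{ind}(C)=1$, after quotienting by the $\mathbb R$-translation the expected dimension is $0$. I expect to argue that for $J$ close to $J_{\text{std}}$ the moduli space $\mathcal M_1(\alpha,\beta)/\mathbb R$ consists of a single point, with two sub-cases: (i) the ``elliptic corner'' rounding, where the relevant curve is a branched cover of a trivial cylinder with one branch point, and one invokes the obstruction-bundle gluing computation of Hutchings--Taubes (as used in \cite{hutchings2005periodic,hutchings2006rounding}) showing the signed/mod-2 count is $1$; and (ii) the generic ``embedded'' case, where automatic transversality in the sense of Wendl applies (genus $0$, ECH index $1$, ends of the right parity) so the count is a clean $1$. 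I would also need the uniqueness part: no other curve configuration can have $\mathrm{ECH}$ index $1$ between $\alpha$ and $\beta$ — this follows by combining the no-crossing Lemma (Section \ref{Pathcannotcross}), the positivity Lemma of Section \ref{Positivity} applied via the slice class $\sigma_{\alpha,\beta}$, and the fact established above that the region carries no interior lattice points, which together pin down the region between $P_\alpha$ and $P_\beta$ to be precisely one corner-rounding region.

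The main obstacle I anticipate is the gluing/transversality bookkeeping in case (i): verifying that the obstruction-bundle count for the branched-cover components (and for the case $k=2$, where the removed corner has two $h$-labels and the new edge has one) really yields an odd count, uniformly as $\epsilon, L$ vary through the concave and Morse--Bott perturbations. Concretely, one must check that the small negative rotation chosen in Section \ref{Morse-Bott} puts us on the correct side of the relevant eigenvalue so that the Hutchings--Taubes gluing coefficient is $1$ rather than $0$; this is where I would lean most heavily on the analysis in \cite[\S 3]{hutchings2005periodic} and the parallel treatment in \cite{cristofaro2020proof}. Everything else — the explicit toric construction of the curve, the index verification via \eqref{FredInd} and \eqref{echinccon}, and the exclusion of competitors via positivity and no-crossing — should go through by the same arguments already assembled in the preceding subsections.
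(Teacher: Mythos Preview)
Your overall strategy is sound and aligned with the paper's, but you are working much harder than necessary because you miss the paper's key reduction. After applying Lemma~\ref{trivcil} to strip off common orbits, the paper observes that by positivity (Section~\ref{Positivity}) any $J$-holomorphic curve from $\alpha$ to $\beta$ must be contained in $\mathbb{R}\times[x_1,x_2]\times\To^2$ for some $0<x_1<x_2<1$: since neither $\alpha$ nor $\beta$ contains the special orbits $e_\pm$, the slice class vanishes outside this interval and \eqref{positivity} forces $C_x=\emptyset$ there. This confines the entire problem to a product region $[x_1,x_2]\times\To^2$ on which the contact form and Reeb dynamics are literally those of the $\To^3$ setting of Hutchings--Sullivan. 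At that point the paper simply invokes \cite[Lem.~3.17]{hutchings2005periodic}, whose proof (via Taubes \cite{taubes2002compendium}) already contains the existence, transversality, and mod~$2$ count you are sketching.

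Your detailed plan --- explicit toric construction, automatic transversality in the embedded case, obstruction-bundle gluing in the branched-cover case --- is essentially an outline of what lies inside that citation, so it is not wrong, but it reinvents machinery that is already packaged. In particular, the ``main obstacle'' you anticipate (verifying the gluing coefficient is $1$ for the correct sign of the rotation number after Morse--Bott perturbation) is precisely the content handled by Taubes' analysis as used in \cite{hutchings2005periodic}; the paper sidesteps redoing this by noting that the localization to $[x_1,x_2]\times\To^2$ places you squarely in the already-analyzed case. The payoff of your more explicit route would be a self-contained argument, at the cost of reproducing a substantial piece of \cite{hutchings2005periodic,taubes2002compendium}; the paper's route is shorter but leans entirely on those references.
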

    
\begin{proof}
     Notice that by Lemma \ref{trivcil} we can suppose that $\alpha$ and $\beta$ has no orbits in common. Since there are no special orbits for $\alpha$ or $\beta$ by positivity there exist a $x_1$ and $x_2$ such that any $J$-holormophic curve  connecting $\alpha$ with $\beta$ must be contained in $[x_1,x_2]\times \To^2$. After this observation the proof follows the same steps as in \cite[Lem.~3.17]{hutchings2005periodic} where the work of Taubes \cite{taubes2002compendium} is used to prove the existence of the corresponding J-holomorphic curve. 
\end{proof}

\printbibliography

@article{hutchings2006rounding,
  title={Rounding corners of polygons and the embedded contact homology of T3},
  author={Hutchings, Michael and Sullivan, Michael G},
  journal={Geometry \& Topology},
  volume={10},
  number={1},
  pages={169--266},
  year={2006},
  publisher={Mathematical Sciences Publishers}
}

@misc{keon,
  doi = {10.48550/ARXIV.1608.07988},
  
  author = {Choi, Keon},
  
  keywords = {Symplectic Geometry (math.SG), FOS: Mathematics, FOS: Mathematics},
  
  title = {Combinatorial embedded contact homology for toric contact manifolds},
  
  publisher = {arXiv},
  
  year = {2016},
  
}

@misc{hutchings2014lecture,
      title={Lecture notes on embedded contact homology}, 
      author={Michael Hutchings},
      year={2014},
      eprint={1303.5789},
      archivePrefix={arXiv},
      primaryClass={math.SG}
}

@misc{nelson2022embedded,
      title={Embedded contact homology of prequantization bundles}, 
      author={Jo Nelson and Morgan Weiler},
      year={2022},
      eprint={2007.13883},
      archivePrefix={arXiv},
      primaryClass={math.SG}
}

@article{Choi_2014,
	doi = {10.1112/jtopol/jtu008},
	year = 2014,
	month = {may},
  
	publisher = {Wiley},
  
	volume = {7},
  
	number = {4},
  
	pages = {1054--1076},
  
	author = {Keon Choi and Michael Hutchings and Daniel Cristofaro-Gardiner and David Frenkel and Vinicius Gripp Barros Ramos},
  
	title = {Symplectic embeddings into four-dimensional concave toric domains},
  
	journal = {Journal of Topology}
}

@misc{symington2002dimensions,
      title={Four dimensions from two in symplectic topology}, 
      author={Margaret Symington},
      year={2002},
      eprint={math/0210033},
      archivePrefix={arXiv},
      primaryClass={math.SG}
}

@article{Ferreira_2022,
	doi = {10.1007/s11784-022-00979-0},
  
	year = 2022,
	month = {sep},
  
	publisher = {Springer Science and Business Media {LLC}
},
  
	volume = {24},
  
	number = {3},
  
	author = {Brayan Ferreira and Vinicius G. B. Ramos},
  
	title = {Symplectic embeddings into disk cotangent bundles},
  
	journal = {Journal of Fixed Point Theory and Applications}
}

@misc{lerman2000contact,
      title={Contact cuts}, 
      author={Eugene Lerman},
      year={2000},
      eprint={math/0002041},
      archivePrefix={arXiv},
      primaryClass={math.SG}
}

@article{TaubesECHSW,
  title={Embedded contact homology and Seiberg–Witten Floer cohomology V},
  author={Clifford Henry Taubes},
  journal={Geometry \& Topology},
  volume={14},
  number={},
  pages={2961–3000},
  year={2010},
  publisher={}
}

@article{WeinsteinConjecture,
  title={The Weinstein conjecture for stable Hamiltonian structures},
  author={Clifford Henry Taubes},
  journal={Geometry \& Topology},
  volume={13},
  number={},
  pages={901–941},
  year={2009},
  publisher={}
}

@article{kronheimer2007monopoles,
  title={Monopoles and three-manifolds},
  author={Kronheimer, Peter and Mrowka, Tomasz},
  journal={(No Title)},
  year={2007},
  publisher={Cambridge University Press}
}

@article{cristofaro2019torsion,
  title={Torsion contact forms in three dimensions have two or infinitely many Reeb orbits},
  author={Cristofaro-Gardiner, Dan and Hutchings, Michael and Pomerleano, Daniel},
  journal={Geometry and Topology},
  volume={23},
  number={7},
  pages={3601--3645},
  year={2019},
  publisher={Mathematical Sciences Publishers}
}

@article{cristofaro2020proof,
  title={Proof of the simplicity conjecture},
  author={Cristofaro-Gardiner, Dan and Humili{\`e}re, Vincent and Seyfaddini, Sobhan},
  journal={arXiv preprint arXiv:2001.01792},
  year={2020}
}

@article{hutchings2005periodic,
  title={The periodic Floer homology of a Dehn twist},
  author={Hutchings, Michael and Sullivan, Michael G},
  journal={Algebraic \& Geometric Topology},
  volume={5},
  number={1},
  pages={301--354},
  year={2005},
  publisher={Mathematical Sciences Publishers}
}

@article{yao2022computing,
  title={Computing embedded contact homology in Morse-Bott settings},
  author={Yao, Yuan},
  journal={arXiv preprint arXiv:2211.13876},
  year={2022}
}

@article{hutchings2013proof,
  title={Proof of the Arnold chord conjecture in three dimensions, II},
  author={Hutchings, Michael and Taubes, Clifford},
  journal={Geometry \& Topology},
  volume={17},
  number={5},
  pages={2601--2688},
  year={2013},
  publisher={Mathematical Sciences Publishers}
}

@article{hutchings2011quantitative,
  title={Quantitative embedded contact homology},
  author={Hutchings, Michael},
  journal={Journal of Differential Geometry},
  volume={88},
  number={2},
  pages={231--266},
  year={2011},
  publisher={Lehigh University}
}

@article{wu2015exotic,
  title={On an exotic Lagrangian torus in},
  author={Wu, Weiwei},
  journal={Compositio Mathematica},
  volume={151},
  number={7},
  pages={1372--1394},
  year={2015},
  publisher={London Mathematical Society}
}

@article{casals2022full,
  title={Full ellipsoid embeddings and toric mutations},
  author={Casals, Roger and Vianna, Renato},
  journal={Selecta Mathematica},
  volume={28},
  number={3},
  pages={61},
  year={2022},
  publisher={Springer}
}

@article{cristofaro2019symplectic,
  title={Symplectic embeddings from concave toric domains into convex ones},
  author={Cristofaro-Gardiner, Dan},
  journal={Journal of Differential Geometry},
  volume={112},
  number={2},
  pages={199--232},
  year={2019},
  publisher={Lehigh University}
}

@article{taubes2002compendium,
  title={A compendium of pseudoholomorphic beasts in $\mathbb{R}\times (\mathbb{S}^1\times \mathbb{S}^2)$},
  author={Taubes, Clifford Henry},
  journal={Geometry and Topology},
  volume={6},
  number={2},
  pages={657--814},
  year={2002},
  publisher={Mathematical Sciences Publishers}
}

@article{hutchings2002index,
  title={An index inequality for embedded pseudoholomorphic curves in symplectizations},
  author={Hutchings, Michael},
  journal={Journal of the European Mathematical Society},
  volume={4},
  number={4},
  pages={313--361},
  year={2002}
}

\end{document}